\documentclass[a4paper,12pt,]{article}
\usepackage{hyperref}
\usepackage{amsfonts}
\usepackage{amsmath}
\usepackage{amssymb}
\usepackage{amsthm}
\usepackage{enumerate}
\usepackage[mathscr]{eucal}
\usepackage{eqlist}
\usepackage{graphicx}
\usepackage{esint}
\usepackage{color}
\usepackage{booktabs} 
\usepackage{multirow} 

\usepackage[body={15.6true cm, 24.9true cm}]{geometry}

\newtheorem{theorem}{\hskip\parindent\bf Theorem}[section]
\newtheorem{lemma}{\hskip\parindent\bf Lemma}[section]
\newtheorem{proposition}{\bf Proposition}[section]
\newtheorem{remark}{\hskip\parindent\bf Remark}[section]
\newtheorem{definition}{\hskip\parindent\bf Definition}[section]
\numberwithin{equation}{section}
\numberwithin{equation}{section} \allowdisplaybreaks
\usepackage{mathdots}

\renewcommand\abstract{{\bf Abstract}}

\begin{document}

\title {\LARGE \bf Near Field Refraction Problem With Loss of Energy in Negative Refractive Index Material}

\author{{Feida Jiang$^{a,b}$,~~Haokun Sui$^{a,}$\thanks{Corresponding author.
E-mail address: suihaokun@seu.edu.cn}}}
\maketitle
\begin{center}
\begin{minipage}{12cm}
\begin{description}
\item \small
 $a$.~School of Mathematics and Shing-Tung Yau Center of Southeast University, Southeast University, Nanjing 211189, P.R. China
 \item \small
$b$.~Shanghai Institute for Mathematics and Interdisciplinary Sciences, Shanghai 200433, P.R. China
\end{description}
\end{minipage}
\end{center}


	

\begin{abstract}{\bf:}{\footnotesize
~This paper studies the near field refraction problem with loss of energy in negative refractive index material. Based on the relative refractive index $\kappa$, the analysis is categorized into two cases, namely $\kappa < -1$ and $-1 < \kappa < 0$. For each case, we give the definition of the refractor and discuss some crucial properties of it. The properties of Fresnel coefficients are also discussed. Based on these properties, the existence of the weak solution when the target measure is either discrete or a finite Radon measure are proved. Besides, the critical case $\kappa = -1$ is also discussed briefly at the end of this paper.}		
\end{abstract}
	
{\bf Key Words:} Negative refraction; Near field; Weak solution; Loss of energy
	
{{\bf 2010 Mathematics Subject Classification.} Primary: 35Q60, 78A05; Secondary: 35J96.}

\tableofcontents

\section{Introduction}\label{Section 1}

\sloppy{}

Near field refraction problem refers that given two media \textsc{I} and \textsc{II}, and a light source in medium \textsc{I}, constructing a refracting surface $\mathcal{R}$ separating media \textsc{I} and \textsc{II}, such that all ray emitted from the light source refract through $\mathcal{R}$ to a specified point $P$ in medium \textsc{II}. This problem was first posed by Gutiérrez and Huang \cite{GH14} in 2014. In their work, an abstract framework called the continuous mapping and measure equation method was presented to prove the existence of the weak solution to the point source near field refraction problem. There are also some other methods to prove the existence of the weak solution to the near field refraction problem. In 2021, Liu and Wang \cite{LW21} showed that both  point light source and parallel light source near field refraction problem without loss of energy can be  formulated as a nonlinear optimisation problem, so that the theory of nonlinear optimization can be used to prove the existence of the weak solution. For the regularity of the near field refraction problem, in 2015, Gutiérrez and Tournier \cite{GT15} established the local $C^{1,\alpha}$ estimates of the solutions of parallel near field refractor and reflector problem, thereby obtaining the local regularity of the corresponding weak solutions. In 2021, Gutiérrez and Tournier \cite{GT21} advanced the regularity theory for the point source near field refraction problem by establishing local $C^{1,\alpha}$ estimates. In 2021, a numerical scheme to solve the point source near field refractor problem to arbitrary precision was proposed by Gutiérrez and Mawi \cite{GM21}. The convergence and universality of the algorithm were also demonstrated in their work. Notice that these results are all obtained in the media with positive refractive index, and do not consider the loss of energy caused by the reflection. For other relevant studies related to the near field refraction problem, see \cite{Ji23}\cite{JT14}\cite{JT18}\cite{Tr21}.

In fact, when the light ray emitted from medium \textsc{I} strikes the interface between medium \textsc{I} and \textsc{II}, it gives two rays, some of the ray will be refracted into medium \textsc{II}, while the other ray will be reflected back to medium \textsc{I}. Therefore, the energy of the incident light ray is not equal to that of the refracted light ray. The distribution of energy between reflected and refracted light ray can be described by reflection and transmission coefficients. In 2013, Mawi and Gutiérrez \cite{GM13} conducted the first mathematical investigation of the far field refraction problem with loss of energy in positive refractive index material, proving the existence of the weak solution and deriving the associated Monge-Amp\`ere type equation. In 2016, Stachura proved the existence of the weak solution to the near field refraction problem with loss of energy in positive refractive index media in \cite{St16}.

Negative refraction was first postulated by the Russian scientist Veselago in 1968 \cite{Ve68}. This phenomenon describes a unique refractive behavior of light. When a light wave strikes the boundary between a positive and a negative refractive index material, it deviates from conventional refraction. Specifically, the incident and refracted waves lie on the same side of the interface normal. However, this theoretical prediction lacked experimental validation for nearly half a century. It was not until 2000 that Smith et al. \cite{SP00} successfully fabricated the world's inaugural medium with both negative equivalent permittivity and permeability in the microwave region using copper-based composites, marking the transition of negative refraction from theoretical conjecture to practical realization. Building on this breakthrough, Shelby et al. \cite{SS01} constructed a prism from existing negative refractive index materials in 2001, directly validating the existence of negative refraction through experiments and demonstrating that light incident on the surface of a negative refractive index medium refracts to the same side of the interface normal as the incident wave. Since the dawn of the 21st century, negative refractive index materials have found extensive applications in fields such as optical invisibility \cite{CT05}, perfect lens imaging \cite{PR02}\cite{Pen00}, wireless directional radiation \cite{ET02}, and manufacturing of novel optical devices such as high-capacity optical discs \cite{LH04}.

Mathematically, near field and far field refraction problem without loss of energy in negative refractive index material was first studied by Gutiérrez and Stachura in 2015 \cite{GS15}. Later in 2016, Gutiérrez and Stachura \cite{GS16} further generalized their previous work by investigating double refraction in both near and far fields in negative refractive index media. In 2017, Stachura \cite{St17} proved the existence of the weak solution to the near field and far field refraction problem without loss of energy in negative refractive index material by using Minkowski method and the optimal transmission method respectively. Actually,  refraction with loss of energy also occurs in negative refractive index material \cite{DC07}. Figure \ref{fig1} shows the refraction problem with loss of energy in negative refractive index material, indicating that when an incident light ray having direction of propagation $x\in S^{n-1}$ strikes at $\mathcal{R}$, it will split into two rays: a reflected ray in direction $r\in S^{n-1}$ back into medium \textsc{I} and a refracted ray in direction $m\in S^{n-1}$ transmitted into medium \textsc{II}. This paper mainly focuses on the existence of the weak solution. The results of the existence of the weak solution for the point source near field refraction are shown in the following Table \ref{tab1}. 

\begin{figure}[h]
  \centering
  \includegraphics[width=8.3cm]{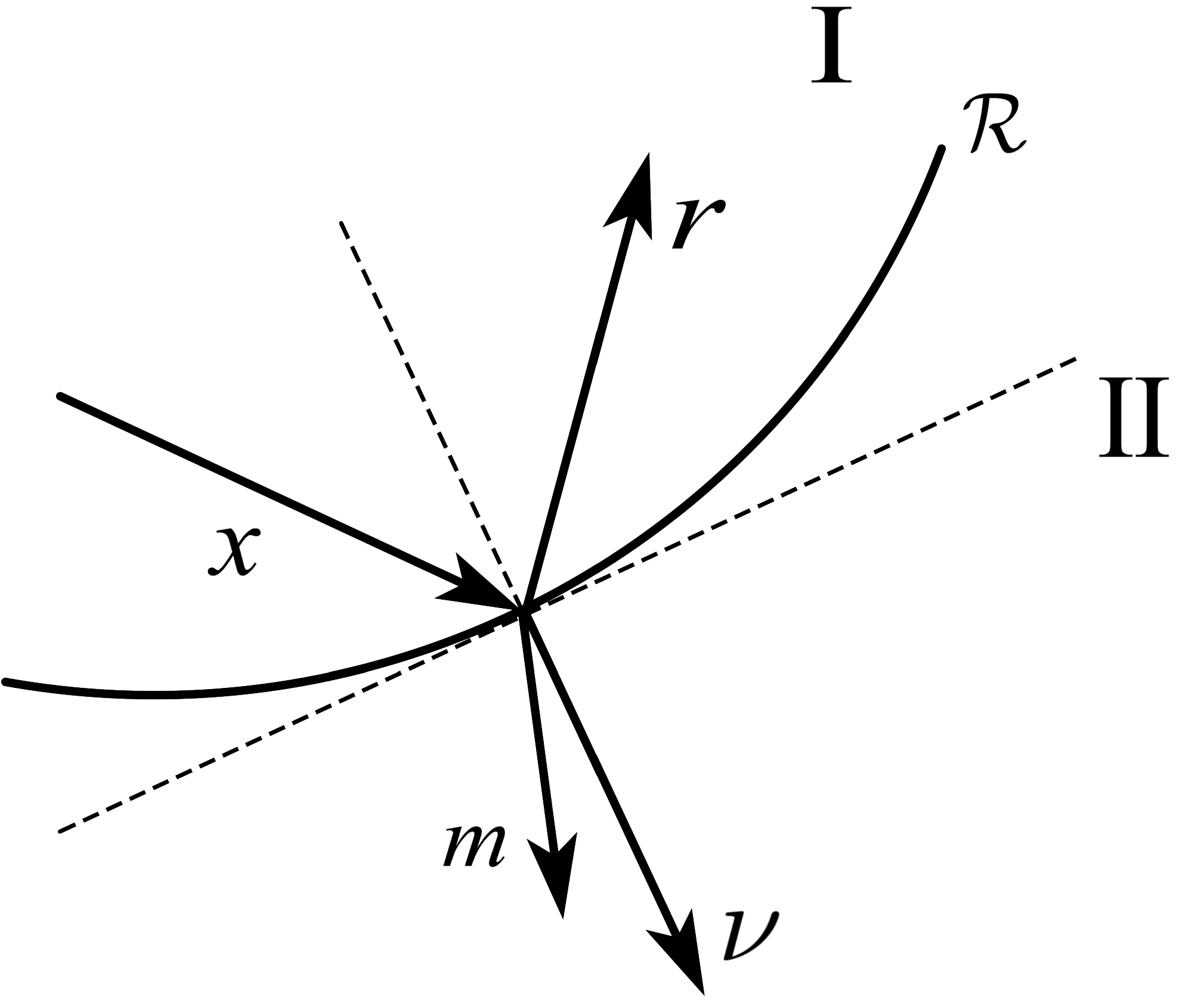}
\caption{Sketch of the refraction problem with loss of energy in negative refractive index material.}\label{fig1}
\end{figure}

\begin{table}[htbp]
  \centering
  \caption{Results of the existence of the weak solution for the point source near field refraction}
  \label{tab1}
  \begin{tabular}{ccc}
    \toprule
    $\kappa$ & without loss of energy & with loss of energy \\
    \midrule
    $\kappa > 1$       & Theorem 6.9 in \cite{GH14}   & Remark \ref{rem1.1}     \\
    $0< \kappa < 1$    & Theorem 5.8 in \cite{GH14}   & Theorem 7.4 in \cite{St16}     \\
    $\kappa < -1$      & Theorem 3.6 in \cite{St17}   & Theorem \ref{thm1.1}    \\
    $\kappa = -1$      & Section \ref{Section 6.3}    & Not occur    \\
    $-1< \kappa < 0$   & Theorem 2.9 in \cite{St17}   & Theorem \ref{thm1.2}    \\
    \bottomrule
  \end{tabular}
\end{table}

\begin{remark}
  The existence of the weak solution to the point source near field refraction problem with loss of energy for the case $\kappa > 1$ can be proved by using the similar method as the case $\kappa < -1$. This paper mainly focuses on the near field refraction problem with loss of energy in negative refraction index material. Therefore, we do not show the existence of the weak solution to the point source near field refraction problem with loss of energy for the case $\kappa > 1$ in this paper.
  \label{rem1.1}   
\end{remark}

In this paper, we consider the following problem: Suppose that $\Omega$ is a domain in $S^{n-1}$ and $D$ is a set contained in a $n-1$  dimensional hypersurface, $f$ and $g$ are two integrable functions on $\Omega$ and $D$ respectively, that is, $f\in L^{1}(\Omega)$, $g\in L^{1}(D)$. Consider two homogeneous, isotropic media: medium \textsc{I} and medium \textsc{II}, surrounded by  $\Omega$ and $D$ respectively which have different optical densities. Given a point $P\in D$, we want to construct a surface $\mathcal{R}$ separating media \textsc{I} and \textsc{II}, such that all rays emitted from the original point $O\in \Omega$ in medium \textsc{I} with intensity $f(x)$, $x\in \Omega$, are refracted by $\mathcal{R}$ to a specific point $P \in D$ in medium \textsc{II} and the corresponding illumination intensity is $g(P)$. In this paper, we first assume that the target measure $\mu$ equals the finite sum of $\delta$-measure, then we use discrete measures to approximate general Radon measure. Assuming that the refractive index of medium \textsc{I} is $n_{1}>0$, the refractive index of medium \textsc{II} is $n_{2}<0$, and set the relative refractive index $\kappa= \dfrac{n_{2}}{n_{1}}$, so $\kappa<0$. Notice that in application, it is natural to study the refraction problem in $S^{2}$, see Figure \ref{fig2}. However, in this paper, we directly study the problem in $S^{n-1}$ with $n \geq 2$ for its generality.

\begin{figure}[h]
  \centering
  \includegraphics[width=8.3cm]{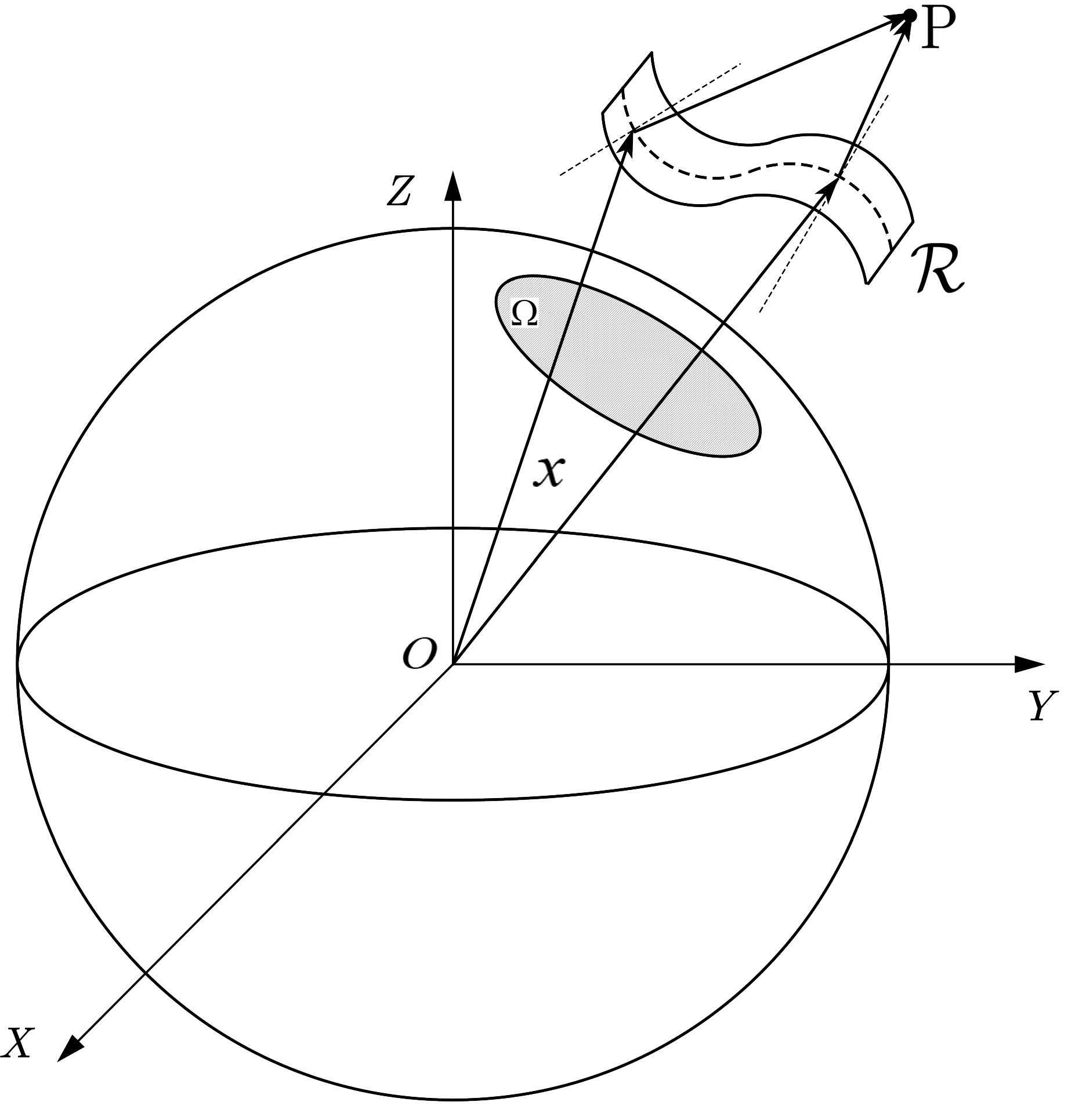}
\caption{Statement of the near field refraction problem in $S^{2}$.}\label{fig2}
\end{figure}

In order to solve this problem, we first need some assumptions.

For the case $\kappa < -1$:
\begin{itemize}
  \item[\bf{(A0\text{-}1)}] There exists $\tau \in (0,1 - \dfrac{1}{\kappa})$, such that $\inf\limits_{x \in \bar{\Omega}, P \in \bar{D}} x \cdot \dfrac{P}{\vert P \vert} \geq \tau + \dfrac{1}{\kappa}$.
  \item[\bf{(A0\text{-}2)}] For any $m \in S^{n-1}$ and $x \in \mathcal{C}_{r_{0}}$, $\bar{D} \cap \{x + tm; t \geq 0\}$ contains at most one point, where $r_{0} \in \left(0,\dfrac{\tau^{2}\kappa^{2}}{(1 + \sqrt{2})^{2}(1 - \kappa)^{2}}\inf\limits_{P \in \bar{D}}\vert P \vert\right)$ and $\mathcal{C}_{r_{0}} = \{tx; x \in \bar{\Omega}, 0 < t \leq r_{0}\}$ is a cone.
  \item[\bf{(A1)}] $f \in L^{1}(\bar{\Omega})$ with $\inf\limits_{\bar{\Omega}}f > 0$.
  \item[\bf{(A2)}] $P_{1},P_{2},\ldots, P_{m},~m \geq 2$ are discrete points in $\bar{D}$, $g_{1}, g_{2}, \ldots, g_{m}>0$.
  \item[\bf{(A3)}] For $j = 2,\ldots,m$, the distance between $P_{1}$ and $P_{j}$ is sufficient large.
  \item[\bf{(A4)}] There exists $\varepsilon > 0$, such that $\inf\limits_{x\in\bar{\Omega},1 \leq j \leq m}x \cdot \dfrac{P_{j}}{\vert P_{j} \vert} \geq \dfrac{1}{\kappa} + \varepsilon$.
  \item[\bf{(A5)}] There holds $\displaystyle\int_{\bar{\Omega}}f(x)dx \geq \dfrac{1}{1 - C_{\varepsilon}}\mu(\bar{D})$, where $C_{\varepsilon}$ is the upper bound of reflection coefficient.
\end{itemize}
For the case $\kappa < -1$, $(A0\text{-}1)$ and $(A0\text{-}2)$ are the conditions to ensure that refraction can occur, so we always assume that $(A0\text{-}1)$ and $(A0\text{-}2)$ hold in the context. In what follows, these conditions will be implicitly assumed in all lemmas and theorems.

For the case $-1 < \kappa < 0$:
\begin{itemize}
  \item[\bf{(B0\text{-}1)}] There exists $\tau \in (0,1 + \kappa)$, such that $x \cdot P \geq (\tau - \kappa) \vert P \vert$ holds for all $x \in \bar{\Omega}$ and $P \in \bar{D}$.
  \item[\bf{(B0\text{-}2)}] For any $m \in S^{n-1}$ and $x \in \mathcal{C}_{r_{0}}$, $\bar{D} \cap \{x + tm; t \geq 0\}$ contains at most one point, where $r_{0} \in \left(0,\dfrac{\tau}{1 - \kappa}dist(0,\bar{D})\right)$ and $\mathcal{C}_{r_{0}} = \{tx; x \in \bar{\Omega}, 0 < t \leq r_{0}\}$ is a cone.
  \item[\bf{(B1)}] $f \in L^{1}(\bar{\Omega})$ with $\inf\limits_{\bar{\Omega}}f > 0$.
  \item[\bf{(B2)}] $P_{1},P_{2},\ldots, P_{m},~m \geq 2$ are discrete points in $\bar{D}$, $g_{1}, g_{2}, \ldots, g_{m}>0$.
  \item[\bf{(B3)}] For $j = 2,\ldots,m$, the distance between $P_{1}$ and $P_{j}$ is sufficient large.
  \item[\bf{(B4)}] There exists $\varepsilon > 0$, such that $\inf\limits_{x\in\bar{\Omega},1 \leq j \leq m}x \cdot \dfrac{P_{j}}{\vert P_{j} \vert} \geq \kappa + \varepsilon$.
  \item[\bf{(B5)}] There holds $\displaystyle\int_{\bar{\Omega}}f(x)dx \geq \dfrac{1}{1 - C_{\varepsilon}}\mu(\bar{D})$, where $C_{\varepsilon}$ is the upper bound of reflection coefficient.
\end{itemize}
For the case $-1 < \kappa < 0$, $(B0\text{-}1)$ and $(B0\text{-}2)$ are the conditions to ensure that refraction can occur, so we always assume that $(B0\text{-}1)$ and $(B0\text{-}2)$ hold in the context. In what follows, these conditions will be implicitly assumed in all lemmas and theorems.

The symbols and terminologies in the assumptions will be explained in the following sections.

Now we state the main results of this paper.

\begin{theorem}
  Suppose that the assumptions $(A1)$ $-$ $(A5)$ hold, then the weak solution to the near field refraction problem with loss of energy with emitting illumination intensity $f(x)$ and prescribe refracted illumination intensity $\mu$ for the case $\kappa < -1$ exists, where $\mu$ is general Radon measure.
  \label{thm1.1}
\end{theorem}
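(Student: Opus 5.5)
The argument has two stages. First I solve the problem for a discrete target $\mu=\sum_{j=1}^m g_j\delta_{P_j}$. Then I pass to a general Radon measure by approximation, using compactness of refractors and weak continuity of the refractor measure.

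\textbf{Discrete case.} For $\kappa<-1$, the refractor is $\rho(x)=\min_{1\le j\le m}\rho_j(x)$. Here $\rho_j(x)=\frac{b_j-\kappa\, x\cdot P_j-\sqrt{\cdots}}{\cdots}$ is the Cartesian oval (hyperbolic-type sheet) focusing $O$ to $P_j$ with parameter $b_j$. Writing $\rho$ as a minimum or a maximum, depending on the convexity convention fixed earlier in the paper for $\kappa<-1$, I will use the definition given there. Condition (A0-1) guarantees that each oval is well defined on $\bar\Omega$.

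The refractor measure with loss of energy is
\[
\mathcal{M}_{\mathcal R}(E)=\int_{\tau_{\mathcal R}(E)} f(x)\bigl(1-r(x\cdot\nu(x))\bigr)\,dx ,
\]
where $r$ is the reflection (Fresnel) coefficient. Assumption (A4) together with the Fresnel coefficient properties gives $0\le r\le C_\varepsilon<1$ uniformly. This yields two facts: $\mathcal M_{\mathcal R}(\bar D)\ge(1-C_\varepsilon)\int_{\bar\Omega}f$, and by (A5) this is at least $\mu(\bar D)$. I will also use two properties established earlier: the sets $\tau_{\mathcal R}(P_j)$ overlap only on null sets, and the measure is continuous in $(b_1,\dots,b_m)$.

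Next I run the Minkowski/Caffarelli–Oliker-type iteration (as in \cite{St16,St17}). Fix $b_1$, and let
\[
W=\{(b_2,\dots,b_m):\ \mathcal M(P_j)\le g_j \ \text{for } j\ge2\}.
\]
Assumption (A3), i.e. $P_1$ far from the other $P_j$, makes $W$ nonempty: choose $b_j$ so large that oval $j$ barely participates. Monotonicity of $\mathcal M(P_j)$ in $b_j$ and in $b_i$ ($i\ne j$), together with a uniform bound from the near-field estimate involving $r_0$ in (A0-2), makes $W$ bounded below. Taking the infimum of $b_2+\dots+b_m$ over $W$ and using continuity, I show that equality $\mathcal M(P_j)=g_j$ holds for every $j\ge2$: otherwise decreasing the offending $b_j$ slightly stays in $W$. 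Then $\mathcal M(P_1)=\mathcal M(\bar D)-\sum_{j\ge2}g_j$.

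Here there is a subtlety. Because of the energy loss, the total $\mathcal M(\bar D)$ depends on the refractor, so I cannot force $\mathcal M(P_1)=g_1$ by mass balance alone. I will either have to interpret the solution in the inequality sense that the paper's definition of weak solution allows, or vary $b_1$ continuously. In the latter case, $\mathcal M(P_1)\ge g_1$ follows from (A5), and as $b_1$ grows $\mathcal M(P_1)$ tends to $0$, so an intermediate value argument gives equality. This step is the main obstacle: it needs joint continuity and a monotonicity of $\mathcal M(P_1)$ in $b_1$ when the other parameters are re-solved. I would try first to prove this through a monotone selection of the minimal $(b_2,\dots,b_m)$ as a function of $b_1$.

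\textbf{General measure.} Partition $\bar D$ into sets of diameter $<1/k$, pick points in them, and form discrete measures $\mu_k\rightharpoonup\mu$ with $\mu_k(\bar D)=\mu(\bar D)$, so (A5) persists. Normalize each solution $\rho_k$, for instance by $\rho_k(x_0)$ fixed or by $\min\rho_k=r_0$. The ovals are uniformly Lipschitz with bounds coming from (A0-1), (A0-2) and (A4), so Arzelà–Ascoli gives a subsequence $\rho_k\to\rho$ uniformly. The limit is again a refractor, since supporting ovals converge with convergent parameters. Finally I need the earlier lemma on weak convergence of refractor measures: $\limsup\mathcal M_k(K)\le\mathcal M(K)$ for compact $K$ and the corresponding statement for open sets. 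This rests on a.e. uniqueness of normals and continuity of the Fresnel factor in the normal. It gives $\mathcal M_{\rho}=\mu$, which proves Theorem \ref{thm1.1}.
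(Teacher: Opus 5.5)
Your overall architecture matches the paper's. For $\mu=\sum_j g_j\delta_{P_j}$ you fix $b_1$, extremize $b_2+\cdots+b_m$ over $W=\{\mathcal M(P_j)\le g_j,\ j\ge 2\}$ and perturb an offending $b_j$ (Theorem~\ref{thm6.1}). You then approximate $\mu$ by discrete measures and pass to the limit using Arzel\`a--Ascoli, closedness of the class of refractors, and the $\limsup$/$\liminf$ estimates of Theorem~\ref{thm4.1}.

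\textbf{The gap.} The step you call the main obstacle rests on an expectation that cannot hold, and it leads you to a false conclusion. The traces $\mathcal T_{\mathcal R}(P_j)$ cover $\bar\Omega$ and overlap only in null sets, so for every discrete refractor $\sum_j\mathcal M(P_j)=\int_{\bar\Omega}f\,t_{\mathcal R}\,dx\ge(1-C_\varepsilon)\int_{\bar\Omega}f\,dx\ge\mu(\bar D)$. Hence once $\mathcal M(P_j)=g_j$ for all $j\ge2$, you automatically get $\mathcal M(P_1)\ge g_1$, whatever $b_1$ is. Along re-solved configurations $\mathcal M(P_1)$ never approaches $0$; you in fact assert both $\mathcal M(P_1)\ge g_1$ and $\mathcal M(P_1)\to 0$ in the same sentence. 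So the intermediate-value argument has nothing to run on. Equality at every atom would force $\int_{\bar\Omega}f\,t_{\mathcal R}\,dx=\mu(\bar D)$ exactly, which the refractor-dependent loss does not permit in general; the paper actually proves $G_{\mathcal R}(\{P_1\})>g_1$.

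The way out is the first option you mention, and it is exactly the paper's. Definition~\ref{def5.2} only requires $G_{\mathcal R}(\omega)\ge\mu(\omega)$. The discrete solution has equality on Borel sets not containing $P_1$, with all surplus transmitted energy going to $P_1$. For the same mass reason, your closing claim $\mathcal M_\rho=\mu$ is false. From $\mathcal M_{\rho_k}\ge\mu_k$ and the two weak convergences you get $\int\phi\,d\mathcal M_\rho\ge\int\phi\,d\mu$ for nonnegative continuous $\phi$, i.e.\ $\mathcal M_\rho\ge\mu$. That inequality is precisely what Theorem~\ref{thm1.1} asserts.

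\textbf{Smaller corrections.}
\begin{itemize}
\item Your normalization $\min\rho_k=r_0$ is incompatible with $\mathcal R\subset\mathcal C_{r_0}$, since it would force $\rho_k\equiv r_0$. What compactness needs is a uniform positive lower bound. The paper gets it by keeping the same surplus point $P_1^l=P_0\in\mathrm{supp}\,\mu$ and the same $b_1$ at every level $l$. Then $\rho_l\ge h(\cdot,P_0,b_1)\ge(b_1-\kappa|P_0|)/(1-\kappa)>0$ uniformly by Lemma~\ref{lem3.1}(a). Together with $\rho_l\le r_0$, this is the hypothesis of Lemmas~\ref{lem3.8} and~\ref{lem3.2}, which bound the supporting parameters and make the limit a refractor. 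It also yields $G_{\mathcal R}(F)=\mu(F)$ whenever $P_0\notin F$.
\item For $\kappa<-1$ the paper's refractor is $\rho=\max_j h(x,P_j,b_j)$, since Definition~\ref{def3.1} has $\rho\ge h$. Here $h$ increases with $b$, an oval is switched off by taking $b_j$ near $\kappa|P_j|$, and one maximizes the sum. Your min/infimum version is the mirror image.
\item You do not need monotonicity of $\mathcal M(P_j)$ in its own $b_j$. You also do not obviously have it, because the Fresnel factor on $\mathcal T_{\mathcal R}(P_j)$ depends on the normal of the $j$-th oval. The perturbation step uses only continuity in $b_j$ (Lemma~\ref{lem6.3}). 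It also uses the fact that letting oval $j$ take over more of $\bar\Omega$ shrinks the other traces while leaving their integrands unchanged.
\end{itemize}
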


\begin{theorem}
  Suppose that the assumptions $(B1)$ $-$ $(B5)$ hold, then the weak solution to the near field refraction problem with loss of energy with emitting illumination intensity $f(x)$ and prescribe refracted illumination intensity $\mu$ for the case $-1< \kappa < 0$ exists, where $\mu$ is general Radon measure.
  \label{thm1.2}
\end{theorem}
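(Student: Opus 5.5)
The proof of Theorem \ref{thm1.2} will mirror the one for Theorem \ref{thm1.1}, with the hyperbolic-type refracting surfaces replaced by the ellipsoidal-type (Cartesian oval) surfaces that occur when $-1<\kappa<0$. The plan has three stages. First, set up the refractors. Second, solve the discrete problem by a Minkowski-type iteration in the spirit of Gutiérrez--Huang \cite{GH14} and Stachura \cite{St16}. Third, approximate a general Radon measure by discrete ones.

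\textbf{Refractors and the mapping.} For each $P\in\bar D$ and $b>0$ I use the oval $\mathcal{O}(P,b)=\{\rho(x)x\}$ that refracts every ray from $O$ to $P$. Under $(B0\text{-}1)$ it is well defined, and its radial function $\rho(x,P,b)$ is continuous and strictly monotone in $b$. A refractor for points $P_1,\dots,P_m$ with parameters $\mathbf b=(b_1,\dots,b_m)$ is $\rho(x)=\min_j\rho(x,P_j,b_j)$ (or $\max$, depending on the supporting convention for $-1<\kappa<0$). It carries the multivalued map $\mathcal{T}(x)=\{P_j: \rho(x)=\rho(x,P_j,b_j)\}$.

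The refracted energy is
\[
G(\mathbf b)(P_j)=\int_{\mathcal{T}^{-1}(P_j)} f(x)\bigl(1-R(x\cdot\nu(x))\bigr)\,dx ,
\]
where $R$ is the Fresnel reflection coefficient and $\nu$ is the normal. From the properties of the Fresnel coefficients shown earlier, $(B4)$ gives a uniform bound $R\le C_\varepsilon<1$ on the relevant angles. Condition $(B0\text{-}2)$ makes $\mathcal{T}^{-1}(P_i)\cap\mathcal{T}^{-1}(P_j)$ a null set for $i\ne j$. I will quote these facts, together with the continuity of $\mathbf b\mapsto G(\mathbf b)(P_j)$, from the earlier lemmas of the paper. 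That continuity follows from dominated convergence because $f\in L^1$ and $R$ depends continuously on the normal.

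\textbf{Discrete case.} Fix $b_1$. Define
\[
W=\{\mathbf b:\ b_1 \text{ fixed},\ G(\mathbf b)(P_j)\le g_j \text{ for } j\ge2\}.
\]
By $(B3)$, taking $b_2,\dots,b_m$ extreme enough makes the ovals for $P_j$, $j\ge 2$, lie outside the oval for $P_1$, so those $P_j$ receive no energy. Hence $W\neq\emptyset$. Monotonicity and the uniform bounds then give compactness of $W$.

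Choose $\mathbf b^*\in W$ that minimizes $\sum_{j\ge2} b_j$ (or maximizes, according to monotonicity). If some $G(\mathbf b^*)(P_j)<g_j$, perturb $b_j$ slightly to enlarge its region. By continuity this stays in $W$ and contradicts extremality. So $G(\mathbf b^*)(P_j)=g_j$ for all $j\ge2$.

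For $P_1$, energy conservation and the bound $R\le C_\varepsilon$ give
\[
\sum_j G(P_j)\ \ge\ (1-C_\varepsilon)\int f .
\]
Combined with $(B5)$, this lets me choose $b_1$ so that $G(\mathbf b^*)(P_1)=g_1$. I will do this either by a continuity argument in $b_1$, or, following \cite{St16}, by formulating the problem as $G\ge\mu$ on $P_1$, since the total refracted mass may exceed $\mu(\bar D)$.

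\textbf{General $\mu$ and the main obstacle.} Partition $\bar D$ into small sets and let $\mu_k$ be the discrete approximants with $\mu_k\rightharpoonup\mu$ and $\mu_k(\bar D)=\mu(\bar D)$. Solve for refractors $\rho_k$ with parameters normalized through a fixed point. Using $r_0$ from $(B0\text{-}2)$, show that the $\rho_k$ are uniformly bounded above and below and uniformly Lipschitz on $\bar\Omega$. Arzelà--Ascoli then gives a subsequence $\rho_k\to\rho$, which is again a refractor.

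The main obstacle is passing to the limit in the energy measure. One must show that $\mathcal{T}_k^{-1}(K)$ accumulates inside $\mathcal{T}^{-1}(K)$ for compact $K$. One must also show that the Fresnel factor $1-R(x\cdot\nu_k)$ converges a.e. This requires a.e. convergence of the normals, which I would obtain from the semiconvexity of $\rho$ and the a.e. single-valuedness of $\mathcal{T}$. The usual upper-semicontinuity argument on compact and open sets then gives weak convergence of $G_k$ to $G$, and hence $G=\mu$ in the weak sense.
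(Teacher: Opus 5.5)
Your route is essentially the paper's: apply Theorem \ref{thm6.2} for discrete measures, then run the approximation argument of Theorem \ref{thm1.1}. In the discrete case, with $b_1$ fixed, you take an extremal point of $W$ (here one minimizes $\sum_{j\ge2}b_j$, since $h(x,P,b)$ increases in $b$ and $\rho=\min_j h(\cdot,P_j,b_j)$) and get $G(P_j)=g_j$ for $j\ge2$ by perturbation. For general $\mu$ you use Arzel\`a--Ascoli and the compact/open semicontinuity of the refractor measure (Theorem \ref{thm4.1}). Your semiconcavity argument for a.e.\ convergence of normals is an acceptable substitute for the supporting-oval compactness of Lemma \ref{lem4.1}; note that a minimum of ovals is semi\emph{concave}, not semiconvex.

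The step that fails is the attempt to get equality at $P_1$ and, at the end, $G=\mu$. Under (B5) every refractor satisfies
\[
G_{\mathcal{R}}(\bar D)=\int_{\bar\Omega}f\,t_{\mathcal{R}}\,dx\ \ge\ (1-C_\varepsilon)\int_{\bar\Omega}f\,dx\ \ge\ \mu(\bar D).
\]
So once $G(P_j)=g_j$ for $j\ge2$, you automatically have $G(P_1)\ge g_1$ for \emph{every} admissible $b_1$. No choice of $b_1$ can bring it down to $g_1$: equality would force $t_{\mathcal{R}}=1-C_\varepsilon$ a.e.\ (and equality in (B5)), which the paper rules out when it proves $G(P_1)>g_1$. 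Hence your ``continuity argument in $b_1$'' cannot work, and ``$G=\mu$ in the weak sense'' is false in general, because the limit measure necessarily carries surplus mass. What is provable, and what Definition \ref{def5.2} asks for, is $G_{\mathcal{R}}\ge\mu$. So you must commit to your second option and organize the limit around it.

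Concretely, fix one $P_0\in\operatorname{supp}\mu$ and one $b_1$, and make $P_0$ the distinguished point $P^l_1$ of every approximant $\mu_l$. This is what ``normalized through a fixed point'' has to mean. It gives the uniform lower bound $\rho_l\ge\frac{1+\kappa}{(1-\kappa)^2}(b_1-\kappa|P_0|)>0$ of Lemma \ref{lem6.5}, which is needed for Lemma \ref{lem3.8} and Arzel\`a--Ascoli. It also keeps all the surplus at $P_0$, so $\mu_l\le G_{\mathcal{R}_l}$ on every Borel set, with equality on sets avoiding $P_0$.

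This inequality then passes to the limit. For relatively open $U\subset\bar D$,
\[
\mu(U)\le\liminf_l\mu_l(U)\le\limsup_l G_{\mathcal{R}_l}(\bar U)\le G_{\mathcal{R}}(\bar U).
\]
Shrinking neighbourhoods of a compact $K$ gives $\mu(K)\le G_{\mathcal{R}}(K)$, and inner regularity yields $\mu\le G_{\mathcal{R}}$ on all Borel sets.
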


In order to prove Theorems \ref{thm1.1} and \ref{thm1.2}, we first give Snell law in vector form and discuss the physical constraints of $\bar{\Omega}$ and $\bar{D}$ to ensure that total internal reflection cannot occur. Then we introduce the refractor and give some properties for the case $\kappa < -1$ and $-1 < \kappa < 0$ respectively. Next, we give Fresnel formula in negative refractive index material and discuss some properties of Fresnel coefficients. At last, the existence results when the underlying measure is the finite sum of $\delta$-measures is proved by using approximation by ovals, see Theorems \ref{thm6.1} and \ref{thm6.2}. Using these results, we can prove the main results of this paper. Additionally, as a critical case, the near field refraction problem for the case $\kappa = -1$ will be briefly mentioned in Section \ref{Section 6.3}. 

The rest of the content is organized as follows: In Section \ref{Section 2}, we give the Snell law in vector form in negative refractive index material, which is the preliminary of this paper. In Section \ref{Section 3}, we give the definition of the refractor for the case $\kappa < -1$ and $-1 < \kappa < 0$ and discuss some properties. The Fresnel coefficients and their properties are given in Section \ref{Section 4}. In Section \ref{Section 5}, we give the definition of the weak solution to the near field refraction problem with loss of energy in negative refractive index medium, and the existence of the weak solution in both in discrete and general situations are proved in Section \ref{Section 6}. Finally in Section \ref{Section 7}, we summarize our work and give some open problems in near field refraction problem with loss of energy.

\section{Snell law in vector form}\label{Section 2}

\sloppy{}

In this section, we briefly introduce Snell law in vector form in negative refractive index material.

Suppose $\mathcal{R}$ is a surface in $\mathbb{R}^{n}$ that separates two homogeneous and isotropic media \textsc{I} and \textsc{II}, with refractive indices $n_{1}>0$ and $n_{2}<0$. A ray of light emitted from $O\in S^{n-1}$ in medium \textsc{I} with direction $x\in S^{n-1}$ strikes at $\Gamma$ at the point $Q$, then the refracted ray has the direction $m\in S^{n-1}$ in medium \textsc{II}. Let $\nu$ be the unit normal to $\Gamma$ at $Q$ going towards medium \textsc{II}, $\theta_{1}$ denote the angle between $x$ and $\nu$, referred to as the angle of incidence; similarly, $\theta_{2}$ denote the angle between 
$m$ and $\nu$, defined as the angle of refraction. Then we have the well-known Snell law in scalar form:
\begin{equation}\label{2.1}
  n_{1}\sin\theta_{1} = n_{2}\sin\theta_{2}.
\end{equation}
This law can be written in vector form as:
\begin{equation}\label{2.2}
  n_{1}(x\times\nu) = n_{2}(m\times\nu).
\end{equation}

From (\ref{2.2}), it is easily seen that $x,m$ and $\nu$ are in the same plane. Since we have set $\kappa=\dfrac{n_{2}}{n_{1}}$, (\ref{2.2}) can be written as
\begin{equation}\label{2.3}
  x-\kappa m = \lambda \nu,
\end{equation}
where $\lambda\in \mathbb{R}$ is given by
\begin{equation}\label{2.4}
  \lambda = x\cdot \nu + \sqrt{(x\cdot \nu)^{2} - (1 - \kappa^{2})} = x\cdot \nu + \vert \kappa \vert \sqrt{1 - \kappa^{-2}(1 - (x\cdot \nu)^{2})}.
\end{equation}
Defining
\begin{equation}\label{2.5}
  \Phi(t) = t + \vert \kappa \vert \sqrt{1 - \kappa^{-2}(1 - t^{2})},
\end{equation}
then 
$$\lambda = \Phi (x \cdot \nu)$$.

We summarize the physical constraints of refraction in the following lemma whose proof is in \cite{GS15}.

\begin{lemma}
  Suppose that the refractive indices of media \textsc{I} and \textsc{II} are given by $n_{1}>0$ and $n_{2}<0$, and set $\kappa = \dfrac{n_{2}}{n_{1}}$.
    
    (a) If $\kappa <-1$, a light ray in medium \textsc{I} in the direction $x \in S^{n-1}$ is refracted by some surface into a light ray in medium \textsc{II} in the direction $m \in S^{n-1}$ if and only if $x\cdot m \geq \dfrac{1}{\kappa}$.
   
    (b) If $-1<\kappa <0$, a light ray in medium \textsc{I} in the direction $x \in S^{n-1}$ is refracted by some surface into a light ray in medium \textsc{II} in the direction $m \in S^{n-1}$ if and only if $x\cdot m \geq \kappa$.
\label{lem2.1}
\end{lemma}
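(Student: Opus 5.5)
Both directions come from the vector Snell law (\ref{2.3}), $x-\kappa m=\lambda\nu$. The plan is to turn the existence of a refracting surface into the existence of a unit normal $\nu$ that satisfies (\ref{2.3}) with the correct orientation, and then to read off the constraint on $x\cdot m$.

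For necessity, suppose the ray $x$ is refracted into $m$, so (\ref{2.3}) holds. Since $\nu$ points towards medium \textsc{II} and the incident ray enters medium \textsc{II}, we have $x\cdot\nu\ge 0$. Because $n_2<0$, the refracted ray $m$ lies on the same side of the normal as $x$, and it still travels into medium \textsc{II}, so $m\cdot\nu\ge0$. Dotting (\ref{2.3}) with $m$ gives $x\cdot m-\kappa=\lambda\,(m\cdot\nu)$, and dotting with $x$ gives $1-\kappa\,x\cdot m=\lambda\,(x\cdot\nu)$. From (\ref{2.4}), $\lambda>0$, since $\lambda=\Phi(x\cdot\nu)$ with $\Phi(t)>0$ for $t\ge0$.
\begin{itemize}
\item In case (b), the first identity gives $x\cdot m-\kappa\ge0$ directly.
\item In case (a), the second identity gives $1-\kappa\,x\cdot m\ge 0$. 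Since $\kappa<0$, this is $x\cdot m\ge 1/\kappa$.
\end{itemize}
The delicate point is to check which of the two sign conditions actually binds in each regime; for instance, in case (a) the first identity also gives $x\cdot m\ge\kappa$, but this is weaker than $x\cdot m\ge 1/\kappa$. I would also use the equality $|x-\kappa m|^2=1+\kappa^2-2\kappa\,x\cdot m=\lambda^2$ to check consistency with (\ref{2.4}).

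For sufficiency, given $x,m$ satisfying the inequality, I would set $\nu=(x-\kappa m)/|x-\kappa m|$. This is well defined because $x-\kappa m\neq0$ when $|\kappa|\ne1$. I would then verify that $x\cdot\nu\ge0$ and $m\cdot\nu\ge0$ hold exactly under the stated inequality. Using the two dot-product identities:
\begin{itemize}
\item In case (a), $m\cdot\nu$ has the sign of $x\cdot m-\kappa>0$, which holds automatically since $x\cdot m\ge 1/\kappa>\kappa$, while $x\cdot\nu$ has the sign of $1-\kappa x\cdot m\ge0$.
\item In case (b), the roles of the two identities reverse.
\end{itemize}
Then $n_1(x\times\nu)=n_2(m\times\nu)$ holds because $x-\kappa m\parallel\nu$. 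Taking as surface the hyperplane through the incidence point with normal $\nu$ gives the required refracting surface.

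The main obstacle is pinning down the physical orientation convention, namely which of $x\cdot\nu\ge0$ and $m\cdot\nu\ge0$ is imposed, and the sign of $\lambda$ for negative $\kappa$. I would fix these by matching $\lambda$ with formula (\ref{2.4}), following \cite{GS15}.
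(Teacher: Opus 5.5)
Your proposal is correct and is essentially the standard argument behind this lemma; the paper gives no proof itself but defers to \cite{GS15}, where the same route is used: dot $x-\kappa m=\lambda\nu$ with $x$ and with $m$ and use $\lambda\ge 0$, $x\cdot\nu\ge0$, $m\cdot\nu\ge0$ for necessity, and for sufficiency take the hyperplane with normal $\nu=(x-\kappa m)/|x-\kappa m|$, where $m\cdot\nu\ge 0$ selects the root $\lambda=\Phi(x\cdot\nu)$ of (\ref{2.4}). One small wording fix: for $-1<\kappa<0$, $\Phi$ is only defined for $t\ge\sqrt{1-\kappa^2}$, so read ``$\Phi(t)>0$ for $t\ge0$'' on its domain, or simply use $\lambda=x\cdot\nu+|\kappa|\,m\cdot\nu\ge 0$.
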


\begin{remark}
 Lemma \ref{lem2.1} is typically used to solve the refraction problem without loss of energy. However, since this paper considers the refraction problem with loss of energy, we need to strengthen the conclusions of Lemma \ref{lem2.1} appropriately, see the following:
 
  Suppose that the refractive indices of media \textsc{I} and \textsc{II} surrounded by  $\Omega$ and $\Omega^{\ast}$ respectively, are given by $n_{1}>0$ and $n_{2}<0$, and set $\kappa = \dfrac{n_{2}}{n_{1}}$.
   
   (a) For the case $\kappa <-1$, we assume that there exists $\varepsilon >0$, such that
   \begin{equation}\label{2.6}
     \inf_{x \in \bar{\Omega}, m \in \bar{\Omega}^{\ast}} x \cdot m \geq \frac{1}{\kappa} + \varepsilon,
   \end{equation}
   then from Lemma \ref{lem2.1} (a), a light ray in medium \textsc{I} in the direction $x \in S^{n-1}$ can be refracted by some surface into a light ray in medium \textsc{II} in the direction $m \in S^{n-1}$.
   
   (b) For the case $-1<\kappa <0$, we assume that there exists $\varepsilon >0$, such that
   \begin{equation}\label{2.7}
     \inf_{x \in \bar{\Omega}, m \in \bar{\Omega}^{\ast}} x \cdot m \geq \kappa + \varepsilon,
   \end{equation}
   then from Lemma \ref{lem2.1} (b), a light ray in medium \textsc{I} in the direction $x \in S^{n-1}$ can be refracted by some surface into a light ray in medium \textsc{II} in the direction $m \in S^{n-1}$.
  \label{rem2.1}
\end{remark}

\section{Refractor and its properties}\label{Section 3}

\sloppy{}

In this section, we give the definition of refractor for the case $\kappa < -1$ and $-1 < \kappa < 0$ which stems from \cite{St17}. Then we discuss some properties of the refractor.

\subsection{Case $\kappa < -1$}\label{Section 3.1}

We first give some conditions for $\Omega$ and $D$.

Let $\Omega \subset S^{n-1}$ be a domain, satisfies $\vert \partial \Omega \vert = 0$. Let $D \subset \mathbb{R}^{n}$ be the target domain contained in an $n-1$ dimensional hypersurface with $\bar{D}$ is compact and $0 \notin \bar{D}$. We also suppose that the basic assumptions $(A0\text{-}1)$ and $(A0\text{-}2)$ hold.

From \cite{GS15}, we know that in the case $\kappa < -1$, the refracting oval is given by
\begin{equation*}
  \Gamma(P,b) := \left\{h(x,P,b)x; ~x \in S^{n-1},~x \cdot P \geq \dfrac{b + \sqrt{(\kappa^{2}-1)(\kappa^{2}\vert P \vert^{2}-b^{2})}}{\kappa^{2}}\right\},
\end{equation*}
where $h(x,P,b) = \dfrac{(\kappa^{2} x \cdot P - b) - \sqrt{(\kappa^{2} x \cdot P - b)^{2} - (\kappa^{2}-1)(\kappa^{2}\vert P \vert^{2}-b^{2})}}{\kappa^{2} - 1}.$ 

Now define
\begin{equation}\label{3.1}
  I(P,b) :=  \dfrac{b + \sqrt{(\kappa^{2}-1)(\kappa^{2}\vert P \vert^{2}-b^{2})}}{\kappa^{2}\vert P \vert},
\end{equation}
then the refracting oval can be written as
\begin{equation*}
  \Gamma(P,b) = \{h(x,P,b)x; ~x \cdot P \geq I(P,b)\vert P \vert\}.
\end{equation*}

Now we can define the notion of the refractor for the case $\kappa < -1$.
\begin{definition}
  Let $\mathcal{R} = \{\rho(x)x; x \in \bar{\Omega}\} \subset \mathcal{C}_{r_{0}}$ be a surface. Then $\mathcal{R}$ is called a near field refractor for the case $\kappa < -1$, if for any $\rho(x_{0})x_{0} \in \mathcal{R}$ there exist $P \in \bar{D}$ and a number $b$ satisfies $\kappa \vert P \vert < b < \vert P \vert$, such that the refracting oval $\Gamma(P,b)$ supports $\mathcal{R}$ at $\rho(x_{0})x_{0}$. Namely, for any $ x \in \bar{\Omega}$, we have $\rho(x) \geq h(x,P,b)$, $\rho(x_{0}) = h(x_{0},P,b)$ and $\bar{\Omega} \subset \left\{x \in S^{n-1};~ x \cdot \dfrac{P}{\vert P \vert} \geq I(P,b)\right\}$.
  \label{def3.1}
\end{definition}

The following Figure \ref{fig3} shows the refracting oval which refracts all ray emitted from the source $O$ to a specific point $P$ for the case $\kappa < -1$.

\begin{figure}[h]
  \centering
  \includegraphics[width=8.3cm]{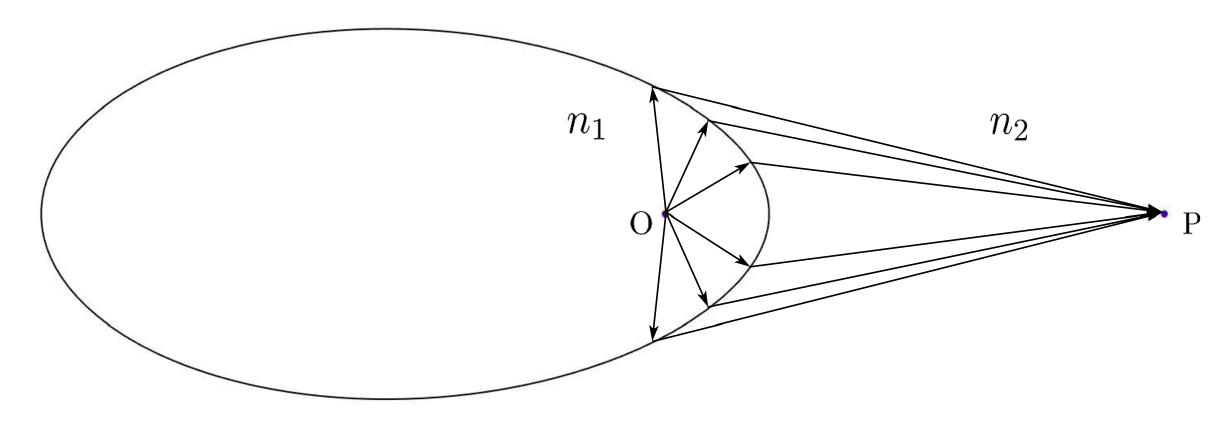}
\caption{Refracting oval when $\kappa < -1$, where $O$ is the focus of oval.}\label{fig3}
\end{figure}

In order to discuss the properties of the near field refractor for the case $\kappa < -1$, we first need the estimate of $h(x,P,b)$ and the Lipschitz continuity of $\rho$, which all stem from \cite{St17}, see the following Lemmas \ref{lem3.1} and \ref{lem3.2}.

\begin{lemma}[Lemma 3.1 in \cite{St17}]
  Suppose that $\kappa < -1$ and $\kappa \vert P \vert < b < \vert P \vert$, then we have
  \newline (a) $\min\limits_{x \in S^{n - 1}}h(x,P,b) = \dfrac{\kappa \vert P \vert - b}{\kappa - 1}$;
  \newline (b) $\max\limits_{x \in S^{n - 1}}h(x,P,b) = \dfrac{\sqrt{\kappa^{2} \vert P \vert^{2} - b^{2}}}{\sqrt{\kappa^{2} - 1}} \leq \sqrt{2 \vert P \vert}\sqrt{\dfrac{\kappa \vert P \vert - b}{1 + \kappa}}$;
  \newline (c) $\dfrac{b - \vert P \vert}{\kappa - 1} \leq \vert P -h(x,P,b)x \vert \leq \dfrac{b - \vert P \vert}{\kappa}$ for any $x \in \Gamma(P,b)$;
  \newline (d) $\dfrac{\sqrt{b - \kappa \vert P \vert}}{- \kappa \vert P \vert \sqrt{1 - \kappa}} \leq I(P,b) - \dfrac{1}{\kappa} \leq \dfrac{(1 + \sqrt{2})\sqrt{b - \kappa \vert P \vert}}{-\kappa \sqrt{\vert P \vert}}\sqrt{1 - \kappa}$.
  \label{lem3.1}
\end{lemma}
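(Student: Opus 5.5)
The plan is to reduce everything to the explicit quadratic that defines $h$ and then read off each of (a)–(d) by elementary one-variable analysis in $t = x\cdot P$.

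\textbf{Step 1: the defining relation.} First I will record the relation behind $h$. A point $X=h x$ lies on $\Gamma(P,b)$ when the optical path condition
\[
h+\kappa\,|P-hx| = b
\]
holds. Equivalently, $|P-hx|=(h-b)/|\kappa|$ with $h\ge b$. Squaring gives
\[
(\kappa^2-1)h^2-2(\kappa^2 t-b)h+\kappa^2|P|^2-b^2=0, \qquad t=x\cdot P .
\]
The given $h(x,P,b)$ is exactly the smaller root of this quadratic. The discriminant is nonnegative precisely when $t\ge I(P,b)|P|$, which is the domain of definition in $\Gamma(P,b)$. I will also check that the smaller root really satisfies $h\ge b$, so that it solves the unsquared relation.

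\textbf{Step 2: parts (a) and (b) via monotonicity in $t$.} Differentiating the quadratic implicitly in $t$ gives
\[
h'(t)=\frac{\kappa^2 h}{(\kappa^2-1)h-(\kappa^2 t-b)} = -\frac{\kappa^2 h}{\sqrt{\text{disc}}}<0,
\]
since for the smaller root the denominator equals minus the square root of the discriminant. So $h$ is decreasing in $t$.

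For (a), the minimum is therefore attained at $t=|P|$, that is, $x=P/|P|$. There $|P-hx|=|P|-h$, and the linear relation $h=b-\kappa(|P|-h)$ gives $h=(\kappa|P|-b)/(\kappa-1)$.

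For (b), the maximum is attained at the endpoint $t=I(P,b)|P|$, where the discriminant vanishes. There $h=(\kappa^2 t-b)/(\kappa^2-1)$. Substituting the formula (\ref{3.1}) for $I$ should simplify to $\sqrt{\kappa^2|P|^2-b^2}/\sqrt{\kappa^2-1}$. For the inequality I will factor
\[
\kappa^2|P|^2-b^2=(|\kappa||P|-b)(|\kappa||P|+b),
\]
paying attention to signs given $\kappa|P|<b<|P|$. I will bound one factor by a multiple of $|P|$ and write $\kappa^2-1=(|\kappa|-1)(|\kappa|+1)$, arranging the constants to reach the stated right-hand side.

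\textbf{Step 3: part (c).} From $|P-hx|=(h-b)/|\kappa|$, the distance is increasing in $h$, so (c) follows from bounds on $h$. Plugging in $h_{\min}$ from (a) gives exactly
\[
\frac{h_{\min}-b}{|\kappa|}=\frac{b-|P|}{\kappa-1}.
\]
For the upper bound I need $h\le |P|$ on $\Gamma(P,b)$, which gives $(|P|-b)/|\kappa|=(b-|P|)/\kappa$. I will get $h\le|P|$ from the exact maximum in (b), comparing $\kappa^2|P|^2-b^2\le(\kappa^2-1)|P|^2$, i.e.\ $b^2\ge |P|^2$. If this comparison fails for the relevant sign range of $b$, I would instead use the triangle inequality together with $h\ge b$ directly.

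\textbf{Step 4: part (d), the expected obstacle.} I expect (d) to be the main difficulty, since it is pure algebra that must produce the sharp constants. I will set $s=b-\kappa|P|>0$ and write
\[
I-\frac1\kappa=\frac{b-\kappa|P|+\sqrt{(\kappa^2-1)(\kappa^2|P|^2-b^2)}}{\kappa^2|P|}
\]
after combining fractions; the sign bookkeeping, with $1/\kappa<0$, must be checked carefully. Then I will express $\kappa^2|P|^2-b^2=s\,(-\kappa|P|-b+\ldots)$ so that a factor $\sqrt{s}$ comes out of the square root. The upper bound should follow from $s\le\sqrt{s}\sqrt{(1-\kappa)|P|}$, using $s\le(1-\kappa)|P|$, together with a bound of the remaining square-root factor by $\sqrt2\sqrt{(1-\kappa)|P|}$; this is where the constant $1+\sqrt2$ arises. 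The lower bound should follow by dropping the nonnegative square-root term, or by bounding it below, and using $s\le(1-\kappa)|P|$ once more.
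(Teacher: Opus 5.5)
The paper gives no proof of this lemma; it only quotes Stachura's Lemma 3.1. Your plan therefore has to stand on its own.

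\textbf{What works.} Steps 1--2, the lower bound in (c) and the upper bound in (d) go through as you describe. You correctly get $h'(t)=-\kappa^2h/\sqrt{\mathrm{disc}}<0$, $h(P/|P|)=\frac{b-\kappa|P|}{1-\kappa}$ and $h_{\max}=\sqrt{\kappa^2|P|^2-b^2}/\sqrt{\kappa^2-1}$. The check $h\ge b$ follows, with $Q$ your quadratic, from $Q(b)=\kappa^2|P-bx|^2\ge0$ together with $I(P,b)|P|\ge b$. Two small slips:
\begin{itemize}
\item The discriminant is also nonnegative near $t=-|P|$, where both roots are negative. What characterizes $t\ge I|P|$ is real \emph{positive} roots.
\item In (d), the leftover factor $\sqrt{(\kappa^2-1)(2|\kappa||P|-s)}$ is bounded by $\sqrt2\,|\kappa|\sqrt{(1-\kappa)|P|}$, not by $\sqrt2\sqrt{(1-\kappa)|P|}$. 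The extra $|\kappa|$ is absorbed by $\kappa^2$ in the denominator.
\end{itemize}

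\textbf{Gap in (c).} The upper bound needs $h\le|P|$, and your own comparison shows $h_{\max}\le|P|$ only when $b\le-|P|$. For $-|P|<b<|P|$ no triangle-inequality fallback can work, because the inequality is false there. Take $\kappa=-2$, $P=e_1$, $b=0$. Then $\Gamma(P,0)$ is the near arc of the circle $|z|=2|z-P|$ (center $(4/3,0)$, radius $2/3$), and $I(P,0)=\sqrt3/2$. At $x=(\sqrt3/2,1/2)$ one has $hx=(1,1/\sqrt3)$, so $|P-hx|=1/\sqrt3>1/2=(b-|P|)/\kappa$. What is true is $|P-hx|\le(h_{\max}-b)/|\kappa|$.

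\textbf{Gap in (d).} The lower bound fails too, and your route cannot reach it. Dropping the square-root term leaves $s/(\kappa^2|P|)$, which is $o(\sqrt s)$ as $s\to0$. The inequality $s\le(1-\kappa)|P|$ only gives $s\le\sqrt s\sqrt{(1-\kappa)|P|}$, which points the wrong way.

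No argument can succeed, because the claim is false. Under $(P,b)\mapsto(\lambda P,\lambda b)$ the hypothesis and $I(P,b)$ are invariant, while the claimed lower bound scales like $\lambda^{-1/2}$. For example, $\kappa=-2$, $b=0$, $|P|=1/100$ gives a left side $\approx4.08$ against $I-1/\kappa=(\sqrt3+1)/2\approx1.37$. It also fails at $|P|=1$, $\kappa=-1.01$, $b=-1$, with $0.070$ against $0.030$. The reason is that the $\sqrt s$ behaviour comes only from the term carrying the factor $\sqrt{\kappa^2-1}$, which vanishes as $\kappa\to-1$. Using $2|\kappa||P|-s>(|\kappa|-1)|P|$ in that term gives a correct replacement:
\[
I(P,b)-\frac1\kappa\ \ge\ \frac{(|\kappa|-1)\sqrt{1-\kappa}\,\sqrt{b-\kappa|P|}}{\kappa^{2}\sqrt{|P|}}.
\]

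So your plan proves (a), (b), and one half each of (c) and (d). The other two halves, as stated, need to be corrected rather than proved.
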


\begin{lemma}[Lemma 3.4 in \cite{St17}]
  Suppose that $\mathcal{R} = \{\rho(x)x; x \in \bar{\Omega}\}$ is a near field refractor for the case $\kappa < -1$, and for any $x \in \bar{\Omega}$, there holds $\rho(x) \leq r_{0}$, then $\rho$ is Lipschitz continuous, with the Lipschitz constant only depending on $\tau, \kappa, \vert P \vert$ and $r_{0}$.
  \label{lem3.2}
\end{lemma}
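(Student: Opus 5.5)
The statement to prove is Lemma \ref{lem3.2}: the radial function of a near field refractor with $\rho\le r_0$ is Lipschitz on $\bar\Omega$. I would argue that $\rho$ is a supremum of ovals $h(\cdot,P,b)$ that are uniformly Lipschitz on $\bar\Omega$, and then transfer the estimate to $\rho$.

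\textbf{Step 1: reduce to a uniform bound on the supporting ovals.} Fix $x_0\in\bar\Omega$ and let $\Gamma(P,b)$ support $\mathcal{R}$ at $\rho(x_0)x_0$, as in Definition \ref{def3.1}. For any $x\in\bar\Omega$ we have $\rho(x)\ge h(x,P,b)$ and $\rho(x_0)=h(x_0,P,b)$. Hence
$$\rho(x_0)-\rho(x)\le h(x_0,P,b)-h(x,P,b).$$
So it suffices to show that every supporting oval satisfies $|\nabla_x h(x,P,b)|\le L$ on $\bar\Omega$, with $L$ depending only on $\tau,\kappa,|P|,r_0$. Swapping the roles of $x$ and $x_0$ then gives $|\rho(x)-\rho(x_0)|\le L|x-x_0|$. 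Because the bound is pointwise in $x$ and $\bar\Omega\subset S^{n-1}$, I would compare along geodesics, or simply use the support inequality at both endpoints.

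\textbf{Step 2: restrict the parameter $b$.} Lemma \ref{lem3.1}(a) gives
$$r_0\ge\rho(x_0)=h(x_0,P,b)\ge\frac{\kappa|P|-b}{\kappa-1}.$$
Therefore $b-\kappa|P|\le(1-\kappa)r_0$, so $b$ stays close to $\kappa|P|$. Lemma \ref{lem3.1}(d) then yields
$$I(P,b)-\frac1\kappa\le\frac{(1+\sqrt2)(1-\kappa)\sqrt{r_0}}{-\kappa\sqrt{|P|}}.$$
By the choice of $r_0$ in (A0-2), this is strictly less than $\tau$; the constant in (A0-2) is designed exactly for this. Combined with (A0-1), which gives $x\cdot P/|P|\ge\tau+1/\kappa$, we obtain a quantitative gap
$$x\cdot\frac{P}{|P|}-I(P,b)\ge\tau-(I-1/\kappa)=:\delta>0\quad\text{on }\bar\Omega.$$

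\textbf{Step 3: differentiate $h$.} Write $s=x\cdot P$ and $\Delta=(\kappa^2 s-b)^2-(\kappa^2-1)(\kappa^2|P|^2-b^2)$. Then
$$\partial_s h=\frac{\kappa^2}{\kappa^2-1}\Bigl(1-\frac{\kappa^2 s-b}{\sqrt\Delta}\Bigr).$$
The tangential gradient is bounded by $|P|\,|\partial_s h|$, so everything reduces to bounding $\sqrt\Delta$ below. The quantity $\Delta$ vanishes exactly at the threshold $s=I(P,b)|P|$, and is increasing in $s$ there, since $\kappa^2 s-b>0$. The gap from Step 2 therefore gives
$$\Delta\ge c(\kappa,|P|,\tau,r_0)>0 \quad\text{on } \bar\Omega.$$
Together with $|\kappa^2 s-b|\le C(\kappa,|P|)$, this bounds $|\partial_s h|$.

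\textbf{Main obstacle.} The delicate part is the quantitative lower bound on $\Delta$. Near $b\approx\kappa|P|$ we have $\kappa^2|P|^2-b^2\approx0$, so $\Delta$ behaves like $(\kappa^2 s-b)^2$, and I would need $\kappa^2 s-b$ bounded away from $0$. Since $s\ge(\tau+1/\kappa)|P|$ and $b\approx\kappa|P|$, we get $\kappa^2 s-b\gtrsim\kappa^2\tau|P|$, which is positive. In general I would use the difference-of-squares factorization
$$\Delta=\bigl(\kappa^2 s-b\bigr)^2-\bigl(\kappa^2 I|P|-b\bigr)^2=\kappa^2(s-I|P|)\,\bigl(\kappa^2(s+I|P|)-2b\bigr).$$
Both factors are bounded below: the first by $\delta|P|$, and the second using $b<0$ near $\kappa|P|$. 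The resulting Lipschitz constant depends only on $\tau,\kappa,|P|,r_0$.
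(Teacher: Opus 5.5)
Your argument is correct. The paper gives no proof of its own here (it cites Lemma 3.4 of \cite{St17}), and your route is the standard one: use the support inequality at both points, then get a uniform Lipschitz bound on the supporting ovals from Lemma \ref{lem3.1}(a),(d) together with $(A0\text{-}1)$ and $(A0\text{-}2)$, via the gap $x\cdot\frac{P}{|P|}-I(P,b)\ge\tau-\frac{(1+\sqrt2)(1-\kappa)\sqrt{r_0}}{-\kappa\sqrt{|P|}}>0$. This matches the structure of the paper's own proof of the $\kappa=-1$ analogue, Lemma \ref{lem6.7}.

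Two small tightenings are worth making. First, bound the second factor through the identity $\kappa^2(s+I|P|)-2b=\kappa^2(s-I|P|)+2\sqrt{(\kappa^2-1)(\kappa^2|P|^2-b^2)}\ge\kappa^2(s-I|P|)$, which is cleaner than appealing to $b<0$. Second, apply the mean value theorem in the single variable $s=x\cdot P$ on $[(\tau+\frac1\kappa)|P|,|P|]$, using $|x\cdot P-x_0\cdot P|\le|P|\,|x-x_0|$, instead of comparing along geodesics, since a geodesic can leave the cap $\{x\cdot P\ge(\tau+\frac1\kappa)|P|\}$ when $\tau+\frac1\kappa<0$.
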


Next, we define refractor mapping and trace mapping.

\begin{definition}
  Suppose the near field refractor $\mathcal{R} = \{\rho(x)x; x \in \bar{\Omega}\}$ is given, the refractor mapping of $\mathcal{R}$ is a multi-value map define by
  \begin{equation}\label{3.2}
    \mathcal{N}_{\mathcal{R}}(x) := \{P \in \bar{D};~ \Gamma(P,b)~ supports~ \mathcal{R} ~at ~ \rho(x)x\}.
  \end{equation}
  Given $P \in \bar{D}$, the trace mapping of $\mathcal{R}$ is defined by
  \begin{equation}\label{3.3}
    \mathcal{T}_{\mathcal{R}}(P) := \{x \in \bar{\Omega};~ P \in \mathcal{N}_{\mathcal{R}}(x)\}.
  \end{equation}
  \label{def3.2}
\end{definition}

\begin{remark}
  If $\mathcal{R}$ is a near field refractor, then $\mathcal{N}_{\mathcal{R}}(\bar{\Omega}) = \bar{D}$.
  \label{rem3.1}
\end{remark}

\subsection{Case $-1 < \kappa <0$}\label{Section 3.2}

Similar to Section \ref{Section 3.1}, we start with some conditions for $\Omega$ and $D$.

Let $\Omega \subset S^{n-1}$ be a domain, satisfies $\vert \partial \Omega \vert = 0$. Let $D \subset \mathbb{R}^{n}$ be the target domain contained in an $n-1$ dimensional hypersurface with $\bar{D}$ is compact and $0 \notin \bar{D}$. We also suppose that the basic assumptions $(B0\text{-}1)$ and $(B0\text{-}2)$ hold.

From \cite{GS15}, we know that in the case $\kappa < -1$, the refracting oval is given by
\begin{equation*}
  \mathcal{O}(P,b) := \{h(x,P,b)x;~x \in S^{n-1},~ x \cdot P \geq b\},
\end{equation*}
where $h(x,P,b) = \dfrac{(b - \kappa^{2} x \cdot P) + \sqrt{(b - \kappa^{2} x \cdot P)^{2} - (1 - \kappa^{2})(b^{2} - \kappa^{2}\vert P \vert^{2})}}{1 - \kappa^{2}}.$

Now we can define the notion of the refractor for the case $-1< \kappa < 0$.

\begin{definition}
  Let $\mathcal{R} = \{\rho(x)x; x \in \bar{\Omega}\} \subset \mathcal{C}_{r_{0}}$ be a surface. Then $\mathcal{R}$ is called a near field refractor for the case $-1< \kappa < 0$, if for any $\rho(x_{0})x_{0} \in \mathcal{R}$ there exist $P \in \bar{D}$ and a number $b$ satisfies $\kappa \vert P \vert < b < \vert P \vert$, such that the refracting oval $\mathcal{O}(P,b)$ supports $\mathcal{R}$ at $\rho(x_{0})x_{0}$. Namely, for any $ x \in \bar{\Omega}$, we have $\rho(x) \leq h(x,P,b)$ and $\rho(x_{0}) = h(x_{0},P,b)$. \label{def3.3}
\end{definition}

The following Figure \ref{fig4} shows the refracting oval which refracts all ray emitted from the source $O$ to a specific point $P$ for the case $ -1 < \kappa < 0$.

\begin{figure}[h]
  \centering
  \includegraphics[width=8.3cm]{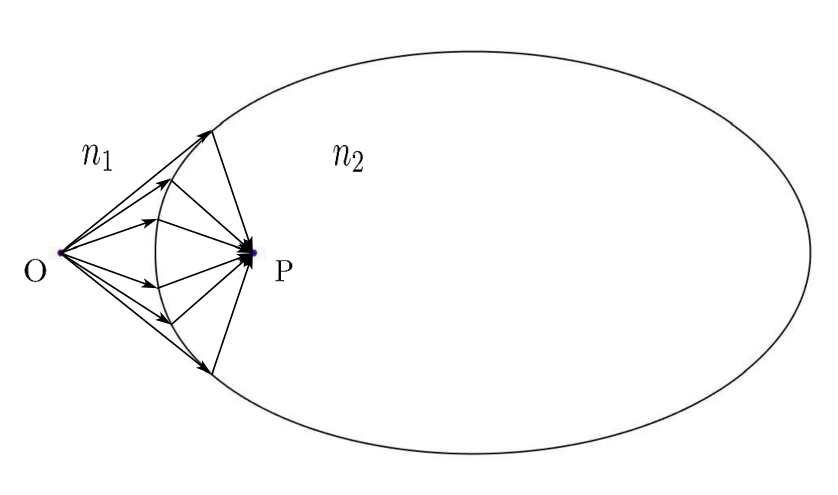}
\caption{Refracting oval when $-1 < \kappa < 0$, where $P$ is the focus of oval.}\label{fig4}
\end{figure}

Similar to Section \ref{Section 3.1}, in order to discuss the properties of the near field refractor for the case $-1< \kappa < 0$, we also need the estimate of $h(x,P,b)$ and the Lipschitz continuity of $\rho$, which all stem from \cite{St17}, see the following Lemmas \ref{lem3.3} and \ref{lem3.4}.

\begin{lemma}[Lemma 2.1 in \cite{St17}]
  Suppose that $-1< \kappa < 0$ and $\kappa \vert P \vert < b < \vert P \vert$, then we have
  \newline (a) $\min\limits_{x \in S^{n - 1}}h(x,P,b) = \dfrac{b - \kappa \vert P \vert}{1 - \kappa}$;
  \newline (b) $\max\limits_{x \in S^{n - 1}}h(x,P,b) = \dfrac{b - \kappa \vert P \vert}{1 + \kappa}$;
  \newline (c) $\min\limits_{x \in S^{n - 1}}\vert P - h(x,P,b) \vert = \dfrac{\vert P \vert - b}{1 - \kappa}$;
  \newline (d) $\max\limits_{x \in S^{n - 1}}\vert P - h(x,P,b) \vert = \dfrac{\sqrt{\vert P \vert^{2} - b^{2}}}{\sqrt{1 - \kappa^{2}}}$.
  \label{lem3.3}
\end{lemma}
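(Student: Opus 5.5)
The plan is to replace the explicit formula for $h$ by the geometric equation of the oval $\mathcal{O}(P,b)$ and read all four extremal values off that equation. Throughout, write $t = x\cdot P \in [-\vert P\vert,\vert P\vert]$ and $d = \vert P - h(x,P,b)x\vert$.

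\emph{Step 1: the implicit equation.} By construction, $h$ is the larger root of
$$(1-\kappa^2)h^2 - 2(b-\kappa^2 t)h + (b^2-\kappa^2\vert P\vert^2)=0 .$$
Regrouping gives
$$(h-b)^2=\kappa^2\bigl(h^2-2ht+\vert P\vert^2\bigr)=\kappa^2 d^2 .$$
I will check that the chosen root has $h\ge b$, so that
$$h = b + \vert\kappa\vert\, d = b-\kappa d,$$
which is the oval $\vert X\vert + \kappa\vert X-P\vert = b$. I will also verify that the discriminant is nonnegative on all of $S^{n-1}$. At $t=-\vert P\vert$ it equals $\kappa^2(b+\vert P\vert)^2$, and it is decreasing in $t$ as long as $b-\kappa^2 t\ge 0$. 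So $h$ is defined on the whole sphere, as (a) and (b) implicitly require.

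\emph{Step 2: monotonicity in $t$.} Differentiating the implicit equation in $t$ gives
$$h'\,\bigl[(h-b)-\kappa^2(h-t)\bigr] = -\kappa^2 h .$$
Since $d^2=(h-t)^2+\vert P\vert^2-t^2\ge (h-t)^2$, we have $h-t\le d$. Hence the bracket is at least $\vert\kappa\vert d-\kappa^2 d=\vert\kappa\vert(1-\vert\kappa\vert)d>0$, because $d>0$ when $0\notin\bar D$ and $b<\vert P\vert$. Therefore $h$ is strictly decreasing in $t$. Because $d=(h-b)/\vert\kappa\vert$, the distance $d$ is also decreasing in $t$. This is the step I expect to need the most care, in particular the sign of the bracket and the choice of root. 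If the implicit argument becomes delicate, my fallback is to differentiate the explicit formula $\bigl(B+\sqrt{B^2-c}\bigr)/(1-\kappa^2)$ with $B=b-\kappa^2 t$.

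\emph{Step 3: the extremal values.} For (a), the minimum of $h$ occurs at $t=\vert P\vert$, that is $x=P/\vert P\vert$. There $d=\vert P\vert-h$, and $h=b-\kappa(\vert P\vert-h)$ gives
$$h=\frac{b-\kappa\vert P\vert}{1-\kappa},$$
which is indeed $<\vert P\vert$ because $b<\vert P\vert$, so the choice of $d$ is consistent. For (b), the maximum occurs at $x=-P/\vert P\vert$. There $d=h+\vert P\vert$, which yields
$$h=\frac{b-\kappa\vert P\vert}{1+\kappa}.$$

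For (c) and (d), I read the quantity as $d$ on the oval itself, that is on $x\cdot P\ge b$; the formula in (d) only matches this reading. By monotonicity, the minimum of $d$ is again at $t=\vert P\vert$, where $d=(h_{\min}-b)/(-\kappa)=(\vert P\vert-b)/(1-\kappa)$. The maximum of $d$ is at the endpoint $t=b$. Substituting $t=b$ into the quadratic, the discriminant simplifies to $(1-\kappa^2)\kappa^2(\vert P\vert^2-b^2)$. This gives $h-b=\vert\kappa\vert\sqrt{\vert P\vert^2-b^2}/\sqrt{1-\kappa^2}$, and therefore
$$d=\frac{\sqrt{\vert P\vert^2-b^2}}{\sqrt{1-\kappa^2}}.$$
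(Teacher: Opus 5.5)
The paper gives no proof of this lemma; it is quoted from Lemma 2.1 of \cite{St17}, so there is no in-paper argument to compare against. On its own terms, your route is sound: you identify $h$ as the larger root of the quadratic of the oval $\vert X\vert+\kappa\vert X-P\vert=b$, show that $h$ and $d=(h-b)/\vert\kappa\vert$ are strictly decreasing in $t=x\cdot P$, and evaluate at $t=\vert P\vert$, $t=-\vert P\vert$ and $t=b$. All four endpoint computations are correct.

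Your reading of (c)--(d) is necessary, not merely convenient. Over all of $S^{n-1}$ the maximum of $d$ is attained at $x=-P/\vert P\vert$ and equals $(b+\vert P\vert)/(1+\kappa)$. This is strictly larger than $\sqrt{\vert P\vert^2-b^2}/\sqrt{1-\kappa^2}$ whenever $b>\kappa\vert P\vert$; for $\kappa=-\frac12$, $\vert P\vert=1$, $b=0$ one gets $2$ versus $2/\sqrt3$. So (d) as printed is false. It holds only on the oval $x\cdot P\ge b$, which is how $\mathcal{O}(P,b)$ is defined and how the analogous Lemma 3.1(c) is phrased.

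Two steps do not hold up as written.

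\emph{The discriminant in Step 1.} Being positive at $t=-\vert P\vert$ and decreasing does not keep a function nonnegative. Moreover, $b-\kappa^2t\ge0$ fails for $t>b/\kappa^2$ when $b<\kappa^2\vert P\vert$. The clean fix, which also gives the promised $h>b$, is to evaluate the quadratic at $h=b$. It equals $-\kappa^2(b^2-2bt+\vert P\vert^2)=-\kappa^2\vert P-bx\vert^2<0$, since $\vert b\vert<\vert P\vert$. With leading coefficient $1-\kappa^2>0$, this gives two real roots $h_-<b<h_+$ for every $x\in S^{n-1}$. Equivalently, the discriminant equals $\kappa^2[(1-\kappa^2)(\vert P\vert^2-t^2)+(t-b)^2]>0$. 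Your bracket in Step 2 is exactly the square root of this quantity, so its positivity then comes for free.

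\emph{The sign of $h$ in Step 2.} Strict decrease from $h'[\cdots]=-\kappa^2h$ needs $h>0$. This does not follow from $h>b$, because the hypotheses allow $b\le0$. It does hold: if $h\le0$, then $d\ge\vert P\vert-\vert h\vert=\vert P\vert+h$. Hence $h=b+\vert\kappa\vert d\ge b+\vert\kappa\vert(\vert P\vert+h)$, i.e.\ $(1-\vert\kappa\vert)h\ge b-\kappa\vert P\vert>0$, a contradiction.

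\emph{A smaller point in (a).} The check that matters is that the other branch is inconsistent, not that the chosen one is consistent. The branch $d=h-\vert P\vert$ would give $h=(b+\kappa\vert P\vert)/(1+\kappa)<\vert P\vert$, which contradicts $h\ge\vert P\vert$. Alternatively, read off the larger root directly from the discriminant value $\kappa^2(\vert P\vert-b)^2$ at $t=\vert P\vert$.

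With these repairs your proof is complete.
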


\begin{lemma}[Lemma 2.3 in \cite{St17}]
  Suppose that $\mathcal{R} = \{\rho(x)x; x \in \bar{\Omega}\}$ is a near field refractor for the case $-1< \kappa < 0$, and for any $x \in \bar{\Omega}$, there holds $\rho(x) \leq r_{0}$, then $\rho$ is Lipschitz continuous, with the Lipschitz constant only depending on $\tau, \kappa$ and $\vert P \vert$.
  \label{lem3.4}
\end{lemma}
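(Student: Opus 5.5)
We prove Lemma \ref{lem3.4} by following the same strategy as Lemma \ref{lem3.2}: compare $\rho$ at two nearby directions using the supporting ovals, so that the Lipschitz constant of $\rho$ is controlled by the Lipschitz constants of the functions $h(\cdot,P,b)$ on the relevant range. Fix $x_{0},x_{1} \in \bar{\Omega}$. By Definition \ref{def3.3} there exist $P_{0} \in \bar{D}$ and $b_{0}$ with $\kappa\vert P_{0}\vert < b_{0} < \vert P_{0}\vert$ such that $\rho \leq h(\cdot,P_{0},b_{0})$ on $\bar{\Omega}$ and $\rho(x_{0}) = h(x_{0},P_{0},b_{0})$. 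Hence
$$\rho(x_{1}) - \rho(x_{0}) \leq h(x_{1},P_{0},b_{0}) - h(x_{0},P_{0},b_{0}).$$
Exchanging the roles of $x_{0}$ and $x_{1}$, using the oval $\mathcal{O}(P_{1},b_{1})$ supporting at $x_{1}$, gives the reverse inequality. It therefore suffices to bound $\vert \nabla_{x} h(x,P,b)\vert$ uniformly over $x \in \bar{\Omega}$ for all admissible supporting pairs $(P,b)$. We would compute this gradient along the great-circle arc joining $x_{0}$ and $x_{1}$, which stays close to $\bar{\Omega}$ for nearby points.

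Next we control $b$ using $\rho \leq r_{0}$. Lemma \ref{lem3.3}(a) gives $\dfrac{b - \kappa\vert P\vert}{1-\kappa} = \min h \leq h(x_{0},P,b) = \rho(x_{0}) \leq r_{0}$, so
$$b - \kappa \vert P \vert \leq (1-\kappa) r_{0} < \tau\, \mathrm{dist}(0,\bar{D}) \leq \tau \vert P\vert$$
by the choice of $r_{0}$ in $(B0\text{-}2)$. Thus $b \leq (\kappa+\tau')\vert P\vert$ with $\tau' < \tau$, so $b$ stays a definite amount below $\vert P\vert$.

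For the derivative itself, write $h$ as the larger root of
$$(1-\kappa^{2})h^{2} - 2(b - \kappa^{2}x\cdot P)h + (b^{2} - \kappa^{2}\vert P\vert^{2}) = 0,$$
which is the oval equation $\vert x\vert h - \kappa\vert P - hx\vert = b$ squared, i.e. $h - b = \kappa \vert P - hx \vert$. Implicit differentiation in $x$ gives
$$\nabla_{x} h = \frac{-\kappa^{2} h\, P^{T}}{(1-\kappa^{2})h - (b - \kappa^{2} x\cdot P)} = \frac{-\kappa^{2} h\, P^{T}}{\sqrt{\Delta}},$$
where $\Delta$ is the discriminant. The key step, and the main obstacle, is a positive lower bound for $\sqrt{\Delta}$. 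We expect to show
$$\sqrt{\Delta} = (1-\kappa^{2})h - b + \kappa^{2} x\cdot P = \kappa^{2}\left(x\cdot P - \frac{h-b}{\kappa^{2}}\cdot\ldots\right)$$
and, more cleanly, to rewrite it via the oval identity as a multiple of $h - \kappa\,(x\cdot(P-hx))/\vert P - hx\vert \cdot(\ldots)$, which is essentially $\lambda$ in (\ref{2.3}) up to the factor $\vert P - hx\vert$. We then bound it below using $(B0\text{-}1)$, $x\cdot P \geq (\tau-\kappa)\vert P\vert$, together with $h \leq r_{0}$ small: since $hx$ is near the origin, $(P-hx)/\vert P-hx\vert$ is close to $P/\vert P\vert$. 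The direct estimate is $\sqrt{\Delta} \geq \kappa^{2}x\cdot P - b + (1-\kappa^{2})h$. With $b \leq (\kappa+\tau')\vert P\vert$ and $x\cdot P \geq (\tau-\kappa)\vert P\vert$, the right side is at least a positive multiple of $\vert P\vert$ depending on $\tau$ and $\kappa$; if needed, the $h$ term is absorbed using $h \geq \min h$ from Lemma \ref{lem3.3}(a). Combined with $h \leq r_{0}$, this yields
$$\vert\nabla h\vert \leq C(\tau,\kappa)\, r_{0}.$$
Since $r_{0}$ is itself bounded by $\tau\vert P\vert/(1-\kappa)$, the constant depends only on $\tau$, $\kappa$ and $\vert P\vert$, as claimed.

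Finally, if the arc between $x_{0}$ and $x_{1}$ leaves $\bar{\Omega}$, we use that the estimate holds on a neighborhood of $\bar{\Omega}$ by continuity of the bounds, or we argue locally and chain the estimates.
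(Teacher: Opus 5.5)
Your argument is essentially the standard one. The paper gives no proof of Lemma~\ref{lem3.4} and only cites \cite{St17}. Your support-and-differentiate scheme is the same structure the paper itself uses for $\kappa=-1$ in Lemma~\ref{lem6.7}: compare $\rho$ with the oval supporting at $x_0$, use $b-\kappa|P|\le(1-\kappa)r_0<\tau|P|$, and bound $H'(t)=-\kappa^2H/\sqrt{\Delta}$ in $t=x\cdot P$ uniformly over admissible $(P,b)$.

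Two points need tightening:
\begin{itemize}
\item The bound $h\le r_0$ holds only at the contact point; elsewhere $h\ge\rho$. Use Lemma~\ref{lem3.3}(b) instead, which gives $h\le\frac{b-\kappa|P|}{1+\kappa}\le\frac{1-\kappa}{1+\kappa}r_0$.
\item The $h$-term in $\sqrt{\Delta}=(1-\kappa^2)h-b+\kappa^2\,x\cdot P$ is not optional: for small $|\kappa|$ the other two terms can be negative. Inserting $h\ge\frac{b-\kappa|P|}{1-\kappa}$ gives $\sqrt{\Delta}\ge|\kappa|(1+\kappa-\tau)|P|$ for every $t\in[-|P|,|P|]$.
\end{itemize}
With these, the mean value theorem in the single variable $t$ yields the Lipschitz constant $\frac{|\kappa|(1-\kappa)}{(1+\kappa)(1+\kappa-\tau)}r_0$. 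It also makes your discussion of arcs leaving $\bar{\Omega}$ unnecessary.
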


Next, we define refractor mapping and trace mapping.

\begin{definition}
  Suppose the near field refractor $\mathcal{R} = \{\rho(x)x; x \in \bar{\Omega}\}$ is given, the refractor mapping of $\mathcal{R}$ is a multi-value map define by
  \begin{equation}\label{3.4}
    \mathcal{N}_{\mathcal{R}}(x) := \{P \in \bar{D};~ \mathcal{O}(P,b)~ supports~ \mathcal{R} ~at ~ \rho(x)x\}.
  \end{equation}
  Given $P \in \bar{D}$, the trace mapping of $\mathcal{R}$ is defined by
  \begin{equation}\label{3.5}
    \mathcal{T}_{\mathcal{R}}(P) := \{x \in \bar{\Omega};~ P \in \mathcal{N}_{\mathcal{R}}(x)\}.
  \end{equation}
  \label{def3.4}
\end{definition}

\begin{remark}
  If $\mathcal{R}$ is a near field refractor, then $\mathcal{N}_{\mathcal{R}}(\bar{\Omega}) = \bar{D}$.
  \label{rem3.2}
\end{remark}

\subsection{Properties of the refractor}\label{Section 3.3}

In this subsection, we give and prove some properties of the refractor.

\begin{lemma}
  Suppose that $P \in \bar{D}$, then $\mathcal{T}_{\mathcal{R}}(P)$ is a closed set in $\bar{\Omega}$.
  \label{lem3.5}
\end{lemma}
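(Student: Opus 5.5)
The plan is to prove closedness sequentially. Take a sequence $x_k \in \mathcal{T}_{\mathcal{R}}(P)$ with $x_k \to x_0$. Since $\bar{\Omega}$ is closed, $x_0 \in \bar{\Omega}$, and it remains to show $x_0 \in \mathcal{T}_{\mathcal{R}}(P)$. I will do the case $\kappa<-1$ in detail; the case $-1<\kappa<0$ is identical with $\Gamma$ replaced by $\mathcal{O}$ and the inequality $\rho\ge h$ replaced by $\rho\le h$.

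By Definition \ref{def3.2}, for each $k$ there is $b_k \in (\kappa|P|,|P|)$ such that $\Gamma(P,b_k)$ supports $\mathcal{R}$ at $\rho(x_k)x_k$. Explicitly:
\begin{itemize}
\item $\rho(x)\ge h(x,P,b_k)$ for all $x\in\bar{\Omega}$;
\item $\rho(x_k)=h(x_k,P,b_k)$;
\item $\bar{\Omega}\subset\{x\cdot P/|P|\ge I(P,b_k)\}$.
\end{itemize}
The $b_k$ lie in the bounded interval $[\kappa|P|,|P|]$, so after passing to a subsequence $b_k\to b_0$. The main step is to rule out the endpoints, so that $b_0\in(\kappa|P|,|P|)$ and $h,I$ stay continuous in $b$ there.

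For the lower endpoint, Lemma \ref{lem3.1}(a) gives $\rho(x_k)=h(x_k,P,b_k)\ge \frac{\kappa|P|-b_k}{\kappa-1}$. Since $\mathcal{R}\subset\mathcal{C}_{r_0}$, we have $\rho\le r_0$, so $b_k-\kappa|P| \le (1-\kappa)r_0$. This does not immediately bound $b_k$ away from $\kappa|P|$. Instead I would use Lemma \ref{lem3.1}(b), which gives $\max h\le \sqrt{2|P|}\sqrt{(\kappa|P|-b_k)/(1+\kappa)}$. If $b_k\to\kappa|P|$, then $h(\cdot,P,b_k)\to0$ uniformly. Combined with $\rho(x_k)=h(x_k,P,b_k)$, this forces $\rho(x_k)\to0$, which contradicts $\rho$ being continuous (Lemma \ref{lem3.2}) and positive on the compact set $\bar{\Omega}$, so $\min\rho>0$.

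For the upper endpoint, Lemma \ref{lem3.1}(d) together with the condition $\bar{\Omega}\subset\{x\cdot P/|P|\ge I(P,b_k)\}$ and (A0-1) should be used. The upper bound in (d) involves $r_0$ through $\rho\le r_0$, so with the choice of $r_0$ in (A0-2) we get that $I(P,b_k)$ stays admissible. Alternatively, as $b_k\to|P|$ the quantity $\frac{\kappa|P|-b_k}{\kappa-1}$ tends to $\frac{(1+\kappa)|P|}{\kappa-1}$... I expect this endpoint bookkeeping to be the main obstacle, and I would first try to reduce it to the estimates of Lemma \ref{lem3.1} and the smallness of $r_0$, exactly as in \cite{St17}.

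With $b_0$ interior, $h(x,P,b)$ is continuous in $(x,b)$ on the relevant region, and $I(P,b)$ is continuous in $b$. Passing to the limit then gives:
\begin{itemize}
\item $\rho(x)\ge h(x,P,b_0)$ for all $x\in\bar{\Omega}$;
\item $\bar{\Omega}\subset\{x\cdot P/|P|\ge I(P,b_0)\}$, since closed conditions are preserved in the limit;
\item $\rho(x_0)=\lim\rho(x_k)=\lim h(x_k,P,b_k)=h(x_0,P,b_0)$, using the continuity of $\rho$ from Lemma \ref{lem3.2}.
\end{itemize}
Hence $\Gamma(P,b_0)$ supports $\mathcal{R}$ at $\rho(x_0)x_0$, so $P\in\mathcal{N}_{\mathcal{R}}(x_0)$, that is, $x_0\in\mathcal{T}_{\mathcal{R}}(P)$. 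For $-1<\kappa<0$, Lemma \ref{lem3.3}(a),(b) and Lemma \ref{lem3.4} play the same roles, and the missing condition on $I$ makes that case simpler.
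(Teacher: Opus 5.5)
\textbf{The gap: the upper endpoint $b_0=|P|$ is never excluded.} Your compactness route is sound in outline, and your lower-endpoint argument is correct (Lemma~\ref{lem3.1}(b), resp.\ Lemma~\ref{lem3.3}(b), together with $\min_{\bar\Omega}\rho>0$). Excluding $b_0=|P|$ is needed for $\Gamma(P,b_0)$ (or $\mathcal{O}(P,b_0)$) to be an admissible supporting oval. The route you sketch through Lemma~\ref{lem3.1}(d) and (A0-1) is not carried out. Your ``alternative'' limit is also miscomputed: as $b_k\to|P|$,
$\frac{\kappa|P|-b_k}{\kappa-1}\to\frac{(\kappa-1)|P|}{\kappa-1}=|P|$, not $\frac{(1+\kappa)|P|}{\kappa-1}$.

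\textbf{The fix is already in your proposal.} The inequality you derived and set aside, $b_k\le\kappa|P|+(1-\kappa)r_0$, is exactly the upper-endpoint bound; you attached it to the wrong endpoint. Since $\tau<1-\frac{1}{\kappa}$ is equivalent to $\tau|\kappa|<1-\kappa$, the constant in (A0-2) is smaller than $1$. Hence $r_0<\inf_{\bar D}|P|\le|P|$, and so $b_k\le\kappa|P|+(1-\kappa)r_0<|P|$ uniformly in $k$. For $-1<\kappa<0$ the same bound follows from Lemma~\ref{lem3.3}(a). There (B0-2) with $\tau<1+\kappa$ gives $r_0<\frac{1+\kappa}{1-\kappa}\,\mathrm{dist}(0,\bar D)<|P|$. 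For $\kappa<-1$ the domain condition you mention would also work: $I(P,b_k)\to I(P,|P|)=1$ would squeeze $\bar\Omega$ into the single point $P/|P|$. That argument has no analogue when $-1<\kappa<0$. With the upper bound inserted, your argument is complete.

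\textbf{Comparison with the paper.} The paper's proof sidesteps the endpoint issue entirely. It uses one $b$ for which the oval supports $\mathcal{R}$ at every $\rho(x_n)x_n$. Then the support inequality and the condition on $I(P,b)$ do not depend on $n$. Only $\rho(x_0)=h(x_0,P,b)$ must be checked, which follows from continuity of $\rho$ (Lemma~\ref{lem3.2}, resp.\ \ref{lem3.4}) and of $h(\cdot,P,b)$.

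The paper does not say why a single $b$ serves all $n$. It is legitimate because, for fixed $P$, $h(x,P,b)$ is strictly increasing in $b$, so the ovals are strictly nested. If $b_1<b_2$ both gave supporting ovals, at contact points $x_1,x_2$ respectively, you would get $\rho(x_1)=h(x_1,P,b_1)<h(x_1,P,b_2)\le\rho(x_1)$ when $\kappa<-1$. When $-1<\kappa<0$ the mirror contradiction occurs at $x_2$. Your route does not need this uniqueness of $b$, at the price of the endpoint analysis above.
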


\begin{proof}
  Let $x_{n} \in \mathcal{T}_{\mathcal{R}}(P)$ with $x_{n} \rightarrow x_{0}$, we need to prove that $x_{0} \in \mathcal{T}_{\mathcal{R}}(P)$.
  
  For the case $\kappa < -1$, because $x_{n} \in \mathcal{T}_{\mathcal{R}}(P)$, then there exists a number $b$ satisfies $\kappa \vert P \vert < b < \vert P \vert$, such that the refracting oval $\Gamma(P,b)$ supports $\mathcal{R}$ at $\rho(x_{n})x_{n}$, then we have $\rho(x_{n}) = h(x_{n},P,b)$. For $x_{0} \in \bar{\Omega}$, Lemma \ref{lem3.2} implies that $\rho(x_{n})x_{n} \rightarrow \rho(x_{0})x_{0}$, from which we deduce $\rho(x_{0}) = h(x_{0},P,b)$. Furthermore, Definition \ref{def3.1} gives $\rho(x) \geq h(x_{n},P,b)$, and thus it follows that $\rho(x) \geq h(x_{0},P,b)$. So $\Gamma(P,b)$ supports $\mathcal{R}$ at $\rho(x_{0})x_{0}$, hence $x_{0} \in \mathcal{T}_{\mathcal{R}}(P)$.
  
  For the case $-1 < \kappa < 0$, also because  $x_{n} \in \mathcal{T}_{\mathcal{R}}(P)$, then there exists a number $b$ satisfies $\kappa \vert P \vert < b < \vert P \vert$, such that the refracting oval $\mathcal{O}(P,b)$ supports $\mathcal{R}$ at $\rho(x_{n})x_{n}$, then we have $\rho(x_{n}) = h(x_{n},P,b)$. For $x_{0} \in \bar{\Omega}$, Lemma \ref{lem3.4} implies that $\rho(x_{n})x_{n} \rightarrow \rho(x_{0})x_{0}$, from which we deduce $\rho(x_{0}) = h(x_{0},P,b)$. Furthermore, Definition \ref{def3.3} gives $\rho(x) \leq h(x_{n},P,b)$, and thus it follows that $\rho(x) \leq h(x_{0},P,b)$. So $\mathcal{O}(P,b)$ supports $\mathcal{R}$ at $\rho(x_{0})x_{0}$, hence $x_{0} \in \mathcal{T}_{\mathcal{R}}(P)$.
\end{proof}

\begin{lemma}
  For any $F \subset \bar{D}$, we have
  \newline (a) $[\mathcal{T}_{\mathcal{R}}(F)]^{c} \subset \mathcal{T}_{\mathcal{R}}(F^{c})$;
  \newline (b) The set $\mathcal{M} = \{F \subset \bar{D};~\mathcal{T}_{\mathcal{R}}(F)~is~Lebesgue~measurable\}$ is a $\sigma$-algebra containing all Borel sets in $\bar{D}$.
  \label{lem3.6}
\end{lemma}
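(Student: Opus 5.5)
For (a), I will take $x \in [\mathcal{T}_{\mathcal{R}}(F)]^{c}$, so $x \in \bar{\Omega}$ and $x \notin \mathcal{T}_{\mathcal{R}}(F)$. Since $\mathcal{R}$ is a near field refractor (Definition \ref{def3.1} or \ref{def3.3}), there is some $P \in \bar{D}$ with $P \in \mathcal{N}_{\mathcal{R}}(x)$; in other words, $x \in \mathcal{T}_{\mathcal{R}}(P)$. If $P$ were in $F$, then $x$ would lie in $\mathcal{T}_{\mathcal{R}}(F)$, which is false. Hence $P \in F^{c}$, and therefore $x \in \mathcal{T}_{\mathcal{R}}(F^{c})$. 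The argument works for both regimes of $\kappa$. Complements are taken in $\bar{\Omega}$ and $\bar{D}$ respectively.

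For (b), I will follow the standard scheme of \cite{GH14}. First I check the $\sigma$-algebra axioms for $\mathcal{M}$.
\begin{enumerate}
\item We have $\mathcal{T}_{\mathcal{R}}(\bar{D}) = \bar{\Omega}$, which is measurable.
\item Countable unions are preserved, because $\mathcal{T}_{\mathcal{R}}(\bigcup F_{i}) = \bigcup \mathcal{T}_{\mathcal{R}}(F_{i})$ holds by the definition of the trace mapping as a preimage.
\item For complements, I will write
\[
\mathcal{T}_{\mathcal{R}}(F^{c}) = [\mathcal{T}_{\mathcal{R}}(F)]^{c} \cup \bigl(\mathcal{T}_{\mathcal{R}}(F^{c}) \cap \mathcal{T}_{\mathcal{R}}(F)\bigr),
\]
using (a). The second set is contained in the singular set $S = \{x \in \bar{\Omega};~ \mathcal{N}_{\mathcal{R}}(x)~\text{contains more than one point}\}$. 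If $|S| = 0$, the second set is Lebesgue measurable as a subset of a null set, and closedness under complements follows.
\end{enumerate}

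To see that $\mathcal{M}$ contains the Borel sets, I will show it contains all compact sets and then take the generated $\sigma$-algebra. For compact $K \subset \bar{D}$, I pick $x_{n} \in \mathcal{T}_{\mathcal{R}}(K)$ with $x_{n} \to x_{0}$, together with $P_{n} \in K$ and ovals with parameters $b_{n}$ supporting $\mathcal{R}$ at $\rho(x_{n})x_{n}$. By compactness I pass to a subsequence with $P_{n} \to P \in K$.

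The constants $b_{n}$ also need to converge, to some $b \in (\kappa|P|, |P|)$. I will get this from the relation $\rho(x_{n}) = h(x_{n},P_{n},b_{n})$ together with $\rho \le r_{0}$ and the bounds in Lemma \ref{lem3.1} (respectively Lemma \ref{lem3.3}).
\begin{itemize}
\item For $\kappa<-1$, part (a) of Lemma \ref{lem3.1} gives $\frac{\kappa|P_{n}|-b_{n}}{\kappa-1} \le r_{0}$, so $b_{n}$ stays bounded away from $|P_{n}|$. Part (b) together with $\rho>0$ keeps $b_{n}$ away from $\kappa|P_{n}|$; alternatively, condition (A0-1) with part (d) does this.
\item For $-1<\kappa<0$, part (a) of Lemma \ref{lem3.3} gives the analogous bounds.
\end{itemize}
With convergence in hand, I use the continuity of $h$ in $(x,P,b)$ and the continuity of $\rho$ (Lemmas \ref{lem3.2} and \ref{lem3.4}) to pass to the limit in $\rho(x)\ge h(x,P_{n},b_{n})$ (or $\le$) and in $\rho(x_{n}) = h(x_{n},P_{n},b_{n})$. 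For $\kappa<-1$, the domain condition $\bar{\Omega}\subset\{x\cdot P/|P|\ge I(P,b)\}$ also passes to the limit by continuity of $I$. This gives $x_{0}\in\mathcal{T}_{\mathcal{R}}(P)\subset\mathcal{T}_{\mathcal{R}}(K)$. So $\mathcal{T}_{\mathcal{R}}(K)$ is closed, hence measurable, and this extends Lemma \ref{lem3.5}.

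The main obstacle is proving $|S|=0$. My plan is to show that at every $x$ where $\rho$ is differentiable, $\mathcal{N}_{\mathcal{R}}(x)$ is a single point; Rademacher's theorem together with Lemmas \ref{lem3.2} and \ref{lem3.4} then gives the result. At such an $x$, any supporting oval is tangent to $\mathcal{R}$, so its normal $\nu$ at $\rho(x)x$ equals the normal of $\mathcal{R}$. The Snell law (\ref{2.3}) with $\lambda=\Phi(x\cdot\nu)$ then determines the refracted direction $m$ uniquely from $x$ and $\nu$. Every supporting focus $P$ must therefore lie on the ray $\{\rho(x)x + tm;~t\ge 0\}$, and since $\rho(x)x\in\mathcal{C}_{r_{0}}$, assumption (A0-2) (respectively (B0-2)) allows at most one such point of $\bar{D}$. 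I expect the delicate points to be justifying that the oval's refracted direction really points from $\rho(x)x$ toward $P$, rather than away from it, in the negative-index geometry, and that supporting ovals are differentiable at the contact point. I will take both from the construction of the ovals in \cite{GS15,St17}.
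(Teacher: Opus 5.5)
Your proposal is correct and follows the same route as the paper. Part (a) is identical, and in (b) you use the same decomposition $\mathcal{T}_{\mathcal{R}}(F^{c}) = [\mathcal{T}_{\mathcal{R}}(F)]^{c} \cup \bigl(\mathcal{T}_{\mathcal{R}}(F^{c}) \cap \mathcal{T}_{\mathcal{R}}(F)\bigr)$, while also supplying the two ingredients the paper only asserts or defers to \cite{St16}: that the overlap is Lebesgue-null (via Rademacher, tangency of supporting ovals, uniqueness of the refracted direction in (\ref{2.3}), and (A0-2)/(B0-2)), and that $\mathcal{T}_{\mathcal{R}}(K)$ is closed for compact $K$.

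One small correction: the alternative you mention via (A0-1) and Lemma \ref{lem3.1}(d) does not keep $b_{n}$ away from $\kappa|P_{n}|$; it only bounds $b_{n}$ from above. Your main argument via Lemma \ref{lem3.1}(b) and $\rho(x_{0})>0$ already handles this. For $-1<\kappa<0$ you need Lemma \ref{lem3.3}(b) as well as (a).
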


\begin{proof}
  $(a)$ If $x \in [\mathcal{T}_{\mathcal{R}}(F)]^{c}$, then $\mathcal{N}_{\mathcal{R}}(x) \cap F = \emptyset$, then $\mathcal{N}_{\mathcal{R}}(x) \cap F^{c} \neq \emptyset$. Then by the definition of trace mapping (\ref{3.3}) and (\ref{3.5}), we have $x \in \mathcal{T}_{\mathcal{R}}(F^{c})$. 
  
  $(b)$ First, we need to prove that the set $\mathcal{M}$ is a $\sigma$-algebra.
  
  Obviously, we have $\mathcal{T}_{\mathcal{R}}(\emptyset) = \emptyset$, $\mathcal{T}_{\mathcal{R}}(\bar{D}) = \bar{\Omega}$.
  For $F_{i} \in \mathcal{M}$, we have
  \begin{equation*}
      \begin{aligned}
          \mathcal{T}_{\mathcal{R}}(\bigcup\limits_{i=1}^{\infty}F_{i}) & = \bigcup\limits_{P \in \bigcup\limits_{i=1}^{\infty}F_{i}}\{x \in \bar{\Omega};~P\in \mathcal{N}_{\mathcal{R}}(x)\} \\
          & = \bigcup\limits_{i=1}^{\infty}\bigcup\limits_{P \in F_{i}}\{x \in \bar{\Omega};~P\in \mathcal{N}_{\mathcal{R}}(x)\} =\bigcup\limits_{i=1}^{\infty} \mathcal{T}_{\mathcal{R}}(F_{i}),
      \end{aligned}
  \end{equation*}
  so $\mathcal{M}$ is closed under countable union. Also for $F \in \mathcal{M}$, using $(a)$, we can get
  \begin{equation*}
    \begin{aligned}
    \mathcal{T}_{\mathcal{R}}(F^{c}) & = \{x \in \bar{\Omega};~\mathcal{N}_{\mathcal{R}}(x) \cap F^{c} \neq \emptyset\} \\
    & = \{x \in \bar{\Omega};~\mathcal{N}_{\mathcal{R}}(x) \cap F = \emptyset\} \cup \{x \in \bar{\Omega};~\mathcal{N}_{\mathcal{R}}(x) \cap F^{c} \neq \emptyset,~\mathcal{N}_{\mathcal{R}}(x) \cap F \neq \emptyset\} \\
    & = [\mathcal{T}_{\mathcal{R}}(F)]^{c} \cup [\mathcal{T}_{\mathcal{R}}(F^{c}) \cap \mathcal{T}_{\mathcal{R}}(F)].
    \end{aligned}
  \end{equation*}
  Since $\vert \mathcal{T}_{\mathcal{R}}(F^{c}) \cap \mathcal{T}_{\mathcal{R}}(F) \vert = 0$ and $\mathcal{T}_{\mathcal{R}}(F)$ is measurable, then $\mathcal{T}_{\mathcal{R}}(F^{c})$ is measurable, hence $\mathcal{M}$ is closed under taking complements. So we have the set $\mathcal{M}$ is a $\sigma$-algebra.
  
  The proof of $\mathcal{M}$ contains all Borel sets in $\bar{D}$ is similar as the proof of Lemma 3.3 in \cite{St16}.
\end{proof}

\begin{lemma}
  Suppose that $\Gamma(P_{k},b_{k})$ is a sequence of refracting oval with $P_{k} \rightarrow P_{0}$ and $b_{k} \rightarrow b_{0}$ as $k \rightarrow \infty$. If $z_{k} \in \Gamma(P_{k},b_{k})$ with $z_{k} \rightarrow z_{0}$ as $k \rightarrow \infty$, then $z_{0} \in \Gamma(P_{0},b_{0})$, and the normal $\nu_{k}(z_{k})$ to the refracting oval $\Gamma(P_{k},b_{k})$ at $z_{k}$ satisfies $\nu_{k}(z_{k}) \rightarrow \nu(z_{0})$, where $\nu(z_{0})$ is the normal to the refracting oval $\Gamma(P_{0},b_{0})$ at the point $z_{0}$.
  \label{lem3.7}
\end{lemma}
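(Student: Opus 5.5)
Since each oval $\Gamma(P,b)$ is given by the explicit radial function $h(x,P,b)$, I would prove Lemma \ref{lem3.7} by a continuity argument in the variables $(x,P,b)$. Write $z_{k} = h(x_{k},P_{k},b_{k})x_{k}$ with $x_{k} \in S^{n-1}$ and $x_{k}\cdot P_{k} \geq I(P_{k},b_{k})\vert P_{k}\vert$.

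\textbf{Step 1: the limit lies on $\Gamma(P_{0},b_{0})$.} By Lemma \ref{lem3.1}(a), $h(x_{k},P_{k},b_{k}) \geq \dfrac{\kappa\vert P_{k}\vert - b_{k}}{\kappa-1}$. Assuming $\kappa\vert P_{0}\vert < b_{0} < \vert P_{0}\vert$, which is implicit since $\Gamma(P_{0},b_{0})$ must be a genuine oval, this lower bound converges to a positive number. Hence $\vert z_{k}\vert$ stays away from $0$, so $z_{0}\neq 0$ and $x_{k} = z_{k}/\vert z_{k}\vert \to x_{0} := z_{0}/\vert z_{0}\vert$.

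The functions $I(P,b)$ and $h(x,P,b)$ are continuous in all variables on the region $\kappa\vert P\vert<b<\vert P\vert$. The only delicate point is that the discriminant in $h$ is nonnegative on the admissible region, and the square root is continuous there even where the discriminant vanishes. So passing to the limit in $x_{k}\cdot P_{k} \geq I(P_{k},b_{k})\vert P_{k}\vert$ gives $x_{0}\cdot P_{0} \geq I(P_{0},b_{0})\vert P_{0}\vert$. Likewise $\vert z_{0}\vert = \lim h(x_{k},P_{k},b_{k}) = h(x_{0},P_{0},b_{0})$, and therefore $z_{0} \in \Gamma(P_{0},b_{0})$.

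\textbf{Step 2: convergence of normals.} Here I would avoid differentiating $h$ directly and use the defining optical property of the oval instead. The oval refracts every ray from $O$ in direction $x$ to $P$, so at $z = h(x,P,b)x$ the refracted direction is $m = \dfrac{P - z}{\vert P - z\vert}$. By the Snell law (\ref{2.3}), the normal is
\begin{equation*}
\nu = \frac{x - \kappa m}{\vert x - \kappa m\vert}.
\end{equation*}
This is well defined because $\vert x-\kappa m\vert = \lambda = \Phi(x\cdot\nu) > 0$. More directly, $\vert x - \kappa m\vert \geq \vert\kappa\vert - 1 > 0$ since $\vert\kappa\vert>1$.

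By Lemma \ref{lem3.1}(c), $\vert P_{k} - z_{k}\vert \geq \dfrac{b_{k}-\vert P_{k}\vert}{\kappa-1}$, which is bounded below by a positive constant. Hence $m_{k} = \dfrac{P_{k}-z_{k}}{\vert P_{k}-z_{k}\vert} \to \dfrac{P_{0}-z_{0}}{\vert P_{0}-z_{0}\vert} = m_{0}$. Combined with $x_{k}\to x_{0}$, this gives
\begin{equation*}
\nu_{k}(z_{k}) = \frac{x_{k}-\kappa m_{k}}{\vert x_{k}-\kappa m_{k}\vert} \to \frac{x_{0}-\kappa m_{0}}{\vert x_{0}-\kappa m_{0}\vert} = \nu(z_{0}).
\end{equation*}

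\textbf{Main obstacle.} The step that needs the most care is justifying the formula $\nu = (x-\kappa m)/\vert x-\kappa m\vert$ for the normal of $\Gamma(P,b)$ at every point, including boundary points where $x\cdot P = I(P,b)\vert P\vert$. One also has to check that its orientation, towards medium \textsc{II}, is consistent along the sequence.

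To handle this, I would cite the characterization of $\Gamma(P,b)$ from \cite{GS15}, namely $\vert z\vert + \kappa\vert P - z\vert = b$ in the appropriate branch. The normal is then the gradient $\dfrac{z}{\vert z\vert} - \kappa\dfrac{z-P}{\vert z-P\vert} = x + \kappa m$ up to sign. Rather than fixing a sign convention by hand, I would simply choose the orientation consistent with (\ref{2.3}). This gradient is nonzero at every point because $\vert\kappa\vert>1$, so the normal is a continuous function of $(z,P)$, and the convergence follows.
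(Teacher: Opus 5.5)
Your proof is correct and is essentially the paper's argument. The paper reads the normal off the Cartesian equation $|z|+\kappa|z-P_k|=b_k$ as $\nu_k(z)=\frac{z}{|z|}-\kappa\frac{P_k-z}{|P_k-z|}$, which is your $x-\kappa m$ before normalization, and passes to the limit; you get the same vector from Snell's law, and you also supply the lower bounds on $|z_k|$ and $|P_k-z_k|$ and the check $x_0\cdot P_0\ge I(P_0,b_0)|P_0|$ that the paper leaves implicit.

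One slip in your last paragraph needs fixing. The gradient of $|z|+\kappa|P-z|$ is $\frac{z}{|z|}+\kappa\frac{z-P}{|z-P|}=x-\kappa m$, not $x+\kappa m$, and these two vectors are not parallel, so ``up to sign'' does not repair it. With the correct sign it agrees with your Step 2, and it also settles the boundary points, since $|x-\kappa m|\ge|\kappa|-1>0$ everywhere.
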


\begin{proof}
  Let $z = h(x,P_{k},b_{k})x$, $x \in S^{n-1}$, then the Cartesian coordinate of the equation of the refracting oval is $\vert z \vert + \kappa \vert z - P_{k} \vert = b_{k}$, so the normal vector at $z$ is $\nu_{k}(z) = \dfrac{z}{\vert z \vert} - \kappa \dfrac{P_{k} - z}{\vert P_{k} - z \vert}$. So we have
  $$\nu_{k}(z_{k}) = \dfrac{z_{k}}{\vert z_{k} \vert} - \kappa \dfrac{P_{k} - z_{k}}{\vert P_{k} - z_{k} \vert} \rightarrow \dfrac{z_{0}}{\vert z_{0} \vert} - \kappa \dfrac{P_{0} - z_{0}}{\vert P_{0} - z_{0} \vert} = \nu(z_{0}).$$
\end{proof}

\begin{remark}
   Lemma \ref{lem3.7} still holds for the case $-1 < \kappa < 0$, with  $\Gamma(P_{k},b_{k})$ replaced by $\mathcal{O}(P_{k},b_{k})$.
  \label{rem3.3}
\end{remark}

\begin{lemma}
  Assume that $\mathcal{R}_{k} = \{\rho_{k}(x)x; ~x\in \bar{\Omega}\}$, $k \geq 1$, is a sequence of near field refractors, then fix $x_{0} \in \bar{\Omega}$, there exist $P_{k} \in \bar{D}$ and $b_{k}$, such that $\Gamma(P_{k},b_{k})$ (for the case $\kappa < -1$) or $\mathcal{O}(P_{k},b_{k})$ (for the case $-1 < \kappa < 0$) supports $\mathcal{R}_{k}$ at $\rho(x_{0})x_{0}$.  Suppose that there exist $C_{1}, C_{2} > 0$, such that $C_{1} \leq \rho_{k}(x) \leq r_{0}$ in $\bar{\Omega}$ for the case $\kappa < -1$ and $C_{2} \leq \rho_{k}(x) \leq r_{0}$ in $\bar{\Omega}$ for the case $-1 < \kappa < 0$, then $b_{k}$ is bounded. Moreover, we have
  \begin{equation}\label{3.6}
    C_{1}(1 - \kappa) + \kappa \vert P_{k} \vert \leq b_{k} \leq \sqrt{\kappa^{2}(\vert P_{k} \vert^{2} - C_{1}^{2}) + C_{1}^{2}}
  \end{equation}
  for the case $\kappa < -1$, and
  \begin{equation}\label{3.7}
    C_{2} (1 + \kappa) + \kappa \vert P_{k} \vert \leq b_{k} \leq \kappa \vert P_{k} \vert + (1 - \kappa) r_{0}
  \end{equation}
  for the case $-1 < \kappa < 0$.
  \label{lem3.8}
\end{lemma}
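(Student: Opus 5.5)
The plan is to turn the support condition at the fixed point $\rho_k(x_0)x_0$ into two inequalities for $h(x_0,P_k,b_k)=\rho_k(x_0)$. I would then compare $h(x_0,P_k,b_k)$ with the explicit minimum and maximum of $h(\cdot,P_k,b_k)$ over $S^{n-1}$ from Lemma \ref{lem3.1} and Lemma \ref{lem3.3}. These extremal values are monotone functions of $b$, so each comparison can be solved for $b_k$. Boundedness of $b_k$ then follows, since $\bar D$ is compact and $|P_k|$ is bounded above and below by positive constants.

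\emph{Case $-1<\kappa<0$.} By Definition \ref{def3.3}, $\rho_k(x_0)=h(x_0,P_k,b_k)$.
\begin{itemize}
\item From Lemma \ref{lem3.3}(b),
\[
\frac{b_k-\kappa|P_k|}{1+\kappa}=\max_{S^{n-1}}h(\cdot,P_k,b_k)\ \ge\ h(x_0,P_k,b_k)=\rho_k(x_0)\ \ge\ C_2 .
\]
This is exactly $b_k\ge C_2(1+\kappa)+\kappa|P_k|$.
\item From Lemma \ref{lem3.3}(a),
\[
\frac{b_k-\kappa|P_k|}{1-\kappa}=\min_{S^{n-1}}h(\cdot,P_k,b_k)\ \le\ \rho_k(x_0)\ \le\ r_0 .
\]
This gives $b_k\le\kappa|P_k|+(1-\kappa)r_0$, which is \eqref{3.7}.
\end{itemize}

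\emph{Case $\kappa<-1$, upper bound.} By Definition \ref{def3.1}, again $\rho_k(x_0)=h(x_0,P_k,b_k)$. Lemma \ref{lem3.1}(b) gives
\[
\frac{\sqrt{\kappa^2|P_k|^2-b_k^2}}{\sqrt{\kappa^2-1}}\ \ge\ h(x_0,P_k,b_k)=\rho_k(x_0)\ \ge\ C_1 .
\]
Squaring yields $b_k^2\le\kappa^2|P_k|^2-(\kappa^2-1)C_1^2=\kappa^2(|P_k|^2-C_1^2)+C_1^2$, hence the right-hand inequality in \eqref{3.6}. In the same way, Lemma \ref{lem3.1}(a) together with $\rho_k\le r_0$ gives $b_k\le\kappa|P_k|+(1-\kappa)r_0$ as an additional upper bound.

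\emph{Case $\kappa<-1$, lower bound.} The claimed bound $b_k\ge C_1(1-\kappa)+\kappa|P_k|$ is equivalent to
\[
\min_{S^{n-1}}h(\cdot,P_k,b_k)=\frac{b_k-\kappa|P_k|}{1-\kappa}\ \ge\ C_1 .
\]
The support inequality only controls $h$ on $\bar\Omega$ and only from below, so this does not come out of the comparison above; I expect it to be the main obstacle.

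\begin{itemize}
\item \emph{First attempt.} Note that the minimum of $h$ is attained in the direction $P_k/|P_k|$; along that ray $|z-P_k|=|P_k|-|z|$, so $b_k=(1-\kappa)|z|+\kappa|P_k|$. One would then try to show, using (A0-1), that this direction lies in (or is controlled by) $\bar\Omega$, so that $\min h=h(P_k/|P_k|)\le$ a value of $\rho_k$ that is at least $C_1$.
\item \emph{Fallback.} Use the Cartesian equation $b_k=|z|+\kappa|z-P_k|$ at $z=\rho_k(x_0)x_0$. Expanding $|z-P_k|-|P_k|\le -z\cdot P_k/|P_k|+|z|^2/(2|P_k|)$ (as $|z|\le r_0\ll|P_k|$) and using (A0-1) gives
\[
b_k-\kappa|P_k|\ \ge\ -\kappa\tau|z|+\frac{\kappa|z|^2}{2|P_k|}.
\]
Since $|z|\ge C_1$ and $r_0$ is small relative to $|P_k|$ by (A0-2), this is a positive multiple of $C_1$.
\end{itemize}

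The fallback still yields a lower bound $b_k-\kappa|P_k|\ge c\,C_1$ with $c>0$, possibly with a constant other than $(1-\kappa)$. That already suffices for boundedness of $b_k$ and for keeping $b_k$ strictly away from $\kappa|P_k|$, which is what the later compactness arguments need.
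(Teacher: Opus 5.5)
Your treatment of the case $-1<\kappa<0$ and of the upper bound in \eqref{3.6} is the paper's argument. You evaluate the support condition at $x_0$, where $h(x_0,P_k,b_k)=\rho_k(x_0)$, and compare with the extremal values of $h$ in Lemmas \ref{lem3.1}(b) and \ref{lem3.3}(a),(b). Your extra bound $b_k\le\kappa|P_k|+(1-\kappa)r_0$ for $\kappa<-1$ is worth keeping. Since $C_1\le r_0<|P_k|$, the square-root bound in \eqref{3.6} is always $\ge|P_k|$, so it is weaker than the a priori $b_k<|P_k|$. Yours keeps $b_k$ uniformly below $|P_k|$.

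The lower bound $b_k\ge C_1(1-\kappa)+\kappa|P_k|$ is a genuine gap, but it is a gap in the statement, not in your reasoning: it is false.

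\emph{Why it fails.} The bound is equivalent to $\min_{S^{n-1}}h(\cdot,P_k,b_k)=\frac{b_k-\kappa|P_k|}{1-\kappa}\ge C_1$. That minimum is attained only in the direction $P_k/|P_k|$, since off that ray $|z-P_k|>|P_k|-|z|$ in the Cartesian equation. Take the single-oval refractor $\rho_k=h(\cdot,P,b)|_{\bar\Omega}$ with $P\in\bar D$ and $P/|P|\notin\bar\Omega$; nothing in $(A0\text{-}1)$, $(A0\text{-}2)$ excludes this. Choose $b$ close to $\kappa|P|$ so that the refractor is admissible, by Lemma \ref{lem3.1}(b),(d) and $(A0\text{-}1)$. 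With $C_1=\min_{\bar\Omega}\rho_k$ you get $b<C_1(1-\kappa)+\kappa|P|$.

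\emph{Where the paper's proof breaks.} The paper writes $b_k=\rho_k(x_0)+\kappa|P_k-h(x_0,P_k,b_k)x_0|\ge C_1+\kappa\frac{b_k-|P_k|}{\kappa-1}$ and cites Lemma \ref{lem3.1}(c). But multiplying the lower bound $|P_k-hx_0|\ge\frac{b_k-|P_k|}{\kappa-1}$ by $\kappa<0$ reverses it. Using the upper bound in (c) instead only yields the triviality $|P_k|\ge\rho_k(x_0)$.

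\emph{Your two attempts.} Your ``first attempt'' fails for a similar reason: $h\le\rho_k$ bounds $h(P_k/|P_k|)$ from above, never from below. Your fallback is correct. The inequality $|z-P_k|-|P_k|\le -z\cdot P_k/|P_k|+|z|^2/(2|P_k|)$ is just $\sqrt{1+u}\le 1+u/2$ and needs no smallness of $|z|$. Since the range of $r_0$ in $(A0\text{-}2)$ forces $r_0<\tau\inf_{\bar D}|P|$, combining it with $(A0\text{-}1)$ gives
\[
b_k-\kappa|P_k|\ \ge\ |\kappa|\,\rho_k(x_0)\Bigl(\tau-\frac{\rho_k(x_0)}{2|P_k|}\Bigr)\ \ge\ \frac{|\kappa|\tau}{2}\,C_1 .
\]

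\emph{What is actually needed.} Boundedness of $b_k$ is trivial from $\kappa|P_k|<b_k<|P_k|$ and compactness of $\bar D$. What Lemma \ref{lem3.9} and the later limit arguments really use is a limit oval with $\kappa|P_0|<b_0<|P_0|$. Your lower bound together with $b_k\le\kappa|P_k|+(1-\kappa)r_0$ delivers exactly that. So the correct form of \eqref{3.6} has $\frac{|\kappa|\tau}{2}C_1$ in place of $(1-\kappa)C_1$, and you should not try to prove the stated version.
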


\begin{proof}
  For the case $\kappa < -1$, because $\Gamma(P_{k},b_{k})$ supports $\mathcal{R}_{k}$ at $\rho(x_{0})x_{0}$, then
  $$\rho_{k}(x) \geq h(x,P_{k},b_{k}) \quad \text{and} \quad \rho_{k}(x_{0}) = h(x_{0},P_{k},b_{k})~\text{for all} ~x\in \bar{\Omega}.$$
  Since
  $$\kappa \vert P - h(x,P,b)x \vert = b - h(x,P,b),$$
  we have
  $$\rho_{k}(x) + \kappa \vert P_{k} - h(x,P_{k},b_{k})x \vert \geq b_{k} \quad \text{and} \quad \rho_{k}(x_{0}) + \kappa \vert P_{k} - h(x_{0},P_{k},b_{k})x_{0} \vert = b_{k}.$$
  
  On the one hand, from Lemma \ref{lem3.1} $(c)$, we have
  $$b_{k} = \rho_{k}(x_{0}) + \kappa \vert P_{k} - h(x_{0},P_{k},b_{k}) \vert \geq C_{1} + \kappa \dfrac{b_{k} - \vert P_{k} \vert}{\kappa - 1},$$
  from which we deduce
  \begin{equation}\label{3.8}
    b_{k} \geq C_{1}(1 - \kappa) + \kappa \vert P_{k} \vert.
  \end{equation}
  On the other hand, from Lemma \ref{lem3.1} $(b)$, we have 
  $$h^{2}(x,P_{k},b_{k}) \leq \dfrac{\kappa^{2} \vert P_{k} \vert^{2} - b_{k}^{2}}{\kappa^{2} - 1}.$$ 
  Hence
  \begin{equation*}
    \begin{aligned}
      b_{k}^{2} &\leq \kappa^{2} \vert P_{k} \vert^{2} - (\kappa^{2} - 1) h^{2}(x,P_{k},b_{k}) \leq \kappa^{2} \vert P_{k} \vert^{2} - (\kappa^{2} - 1)\rho_{k}^{2}(x_{k}) \\
      &\leq \kappa^{2} \vert P_{k} \vert^{2} - (\kappa^{2} - 1) C_{1}^{2}.
    \end{aligned}
  \end{equation*}
  Then we obtain
  \begin{equation}\label{3.9}
    b_{k} \leq \sqrt{\kappa^{2}(\vert P_{k} \vert^{2} - C_{1}^{2}) + C_{1}^{2}}.
  \end{equation}
  Combining (\ref{3.8}) with (\ref{3.9}), we have
  \begin{equation*}
    C_{1}(1 - \kappa) + \kappa \vert P_{k} \vert \leq b_{k} \leq \sqrt{\kappa^{2}(\vert P_{k} \vert^{2} - C_{1}^{2}) + C_{1}^{2}},
  \end{equation*}
  so $b_{k}$ is bounded for the case $\kappa < -1$.
  
  For the case $-1 < \kappa < 0$, because $\mathcal{O}(P_{k},b_{k})$ supports $\mathcal{R}_{k}$ at $\rho(x_{0})x_{0}$, then
  $$\rho_{k}(x) \leq h(x,P_{k},b_{k}) \quad \text{and} \quad \rho_{k}(x_{0}) = h(x_{0},P_{k},b_{k})~\text{for all} ~x\in \bar{\Omega},$$
  so we have 
  $$\rho_{k}(x) + \kappa \vert P_{k} - h(x,P_{k},b_{k})x \vert \leq b_{k} \quad \text{and} \quad \rho_{k}(x_{0}) + \kappa \vert P_{k} - h(x_{0},P_{k},b_{k})x_{0} \vert = b_{k}.$$
  Then from Lemma \ref{lem3.3}, we have 
  \begin{equation*}
    C_{2} (1 + \kappa) + \kappa \vert P_{k} \vert \leq b_{k} \leq \kappa \vert P_{k} \vert + (1 - \kappa) r_{0},
  \end{equation*}
  so $b_{k}$ is bounded for the case $-1 < \kappa < 0$.
\end{proof}

\begin{lemma}
   Suppose that the conditions of the Lemma \ref{lem3.8} holds, and $\rho_{k} \rightarrow \rho$ uniformly on $\bar{\Omega}$, then we have
  \newline (a) $\mathcal{R} := \{\rho(x)x; ~x\in \bar{\Omega}\}$ is also a near field refractor;
  \newline (b) For any compact set $K\subset \bar{D}$,
  $$\varlimsup_{k\rightarrow \infty} \mathcal{T}_{\mathcal{R}_{k}}(K) \subset \mathcal{T}_{\mathcal{R}}(K).$$
  \newline (c) For any open set $G \subset \bar{D}$,
  $$\mathcal{T}_{\mathcal{R}}(G) \subset \varliminf_{k\rightarrow \infty}\mathcal{T}_{\mathcal{R}_{k}}(G) \cup E,$$
  where $E$ is the singular set of $\mathcal{R}$.
  \label{lem3.9}
\end{lemma}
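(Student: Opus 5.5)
The plan is a compactness argument on the supporting ovals, carried out for $\kappa<-1$ with the ovals $\Gamma(P,b)$; the case $-1<\kappa<0$ is identical once $\Gamma$ is replaced by $\mathcal{O}$, the inequality $\rho\ge h$ by $\rho\le h$, and Lemmas \ref{lem3.1}, \ref{lem3.2} by Lemmas \ref{lem3.3}, \ref{lem3.4}.

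\emph{Part (a).} Fix $x_0\in\bar\Omega$. For each $k$, choose $P_k\in\bar D$ and $b_k$ with $\kappa|P_k|<b_k<|P_k|$ such that $\Gamma(P_k,b_k)$ supports $\mathcal{R}_k$ at $\rho_k(x_0)x_0$. Since $\bar D$ is compact, and by Lemma \ref{lem3.8} the $b_k$ are bounded, we may pass to a subsequence with $P_k\to P_0\in\bar D$ and $b_k\to b_0$. The bounds (\ref{3.6}) then pass to the limit. I expect the main obstacle to be showing that $b_0$ stays strictly inside $(\kappa|P_0|,|P_0|)$, so that the limit oval is nondegenerate.
\begin{itemize}
\item The lower bound $b_0\ge C_1(1-\kappa)+\kappa|P_0|>\kappa|P_0|$ comes directly from (\ref{3.6}).
\item For the upper bound I would use Lemma \ref{lem3.1}(c) or (b). Since $\rho_k\le r_0$, the distance $|P_k-h x|\ge|P_k|-r_0$ is bounded below. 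The relation $b-h=\kappa|P-hx|<0$ then forces $b_k\le r_0+\kappa(\inf_{\bar D}|P|-r_0)$, which stays strictly below $|P_0|$.
\end{itemize}
Next, $h(x,P,b)$ is continuous in $(P,b)$ on this range, since the discriminant is positive there. Letting $k\to\infty$ in $\rho_k(x)\ge h(x,P_k,b_k)$ and $\rho_k(x_0)=h(x_0,P_k,b_k)$, and using uniform convergence, gives $\rho(x)\ge h(x,P_0,b_0)$ and $\rho(x_0)=h(x_0,P_0,b_0)$. The domain condition $x\cdot P_k/|P_k|\ge I(P_k,b_k)$ on $\bar\Omega$ also passes to the limit, because $I$ is continuous. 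Finally, $\mathcal{R}\subset\mathcal{C}_{r_0}$ since $C_1\le\rho\le r_0$. Hence $\Gamma(P_0,b_0)$ supports $\mathcal{R}$ at $\rho(x_0)x_0$, and $\mathcal{R}$ is a near field refractor.

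\emph{Part (b).} Let $x_0\in\varlimsup_k\mathcal{T}_{\mathcal{R}_k}(K)$. Then along a subsequence there are $x_{k}\in\mathcal{T}_{\mathcal{R}_{k}}(P_{k})$ with $P_{k}\in K$ and $x_{k}\to x_0$; for $x_0$ in the limsup of sets one may even take $x_k=x_0$. Repeat the argument of (a) with the moving points $x_k$. Compactness of $K$ gives $P_k\to P_0\in K$, and $b_k\to b_0$ as before. Equicontinuity of the $\rho_k$ (the Lipschitz bound of Lemma \ref{lem3.2}, uniform in $k$) together with uniform convergence gives $\rho_k(x_k)\to\rho(x_0)$. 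Passing to the limit exactly as in (a) shows that $\Gamma(P_0,b_0)$ supports $\mathcal{R}$ at $\rho(x_0)x_0$, so $x_0\in\mathcal{T}_{\mathcal{R}}(K)$.

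\emph{Part (c).} Let $x_0\in\mathcal{T}_{\mathcal{R}}(G)\setminus E$, where $E$ is the set of points at which $\mathcal{N}_{\mathcal{R}}$ is multivalued (equivalently, where $\mathcal{R}$ has no unique normal). Then $\mathcal{N}_{\mathcal{R}}(x_0)=\{P\}$ with $P\in G$. Suppose $x_0\notin\varliminf_k\mathcal{T}_{\mathcal{R}_k}(G)$. Then for infinitely many $k$ we can pick $P_k\in\mathcal{N}_{\mathcal{R}_k}(x_0)\subset\bar D\setminus G$, which is a compact set. By the argument of (b), a subsequence gives $P_k\to P'\in\bar D\setminus G$ with $P'\in\mathcal{N}_{\mathcal{R}}(x_0)$. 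This contradicts $\mathcal{N}_{\mathcal{R}}(x_0)=\{P\}$ with $P\in G$.

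Two points need care here. The first is that uniqueness of the supporting oval at a non-singular point requires that the normal determines $P$. I would get this from Lemma \ref{lem3.7} and Snell's law (\ref{2.3}): the normal fixes the refracted direction $m$, and (A0\text{-}2) then allows only one point of $\bar D$ on the ray $\{\rho(x_0)x_0+tm;\,t\ge0\}$. The second is that the nondegeneracy of $b_0$ from part (a) is needed again for the limits $P'$ obtained in (b) and (c).
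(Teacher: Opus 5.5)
Your proposal is correct and follows the paper's proof. For (a) and (b), the paper also extracts convergent subsequences $P_k\to P_0$, $b_k\to b_0$ of the supporting ovals (via Lemma \ref{lem3.8} and compactness) and passes to the limit in $\rho_k\ge h(\cdot,P_k,b_k)$ and $\rho_k(x_0)=h(x_0,P_k,b_k)$. For (c), it applies (b) to the compact set $\bar D\setminus G$ and uses that points of $\mathcal{T}_{\mathcal{R}}(G)\cap\mathcal{T}_{\mathcal{R}}(\bar D\setminus G)$ are singular; you phrase this pointwise by contradiction, while the paper uses a complement identity.

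Your extra checks supply details the paper omits: the strict bound $b_0<|P_0|$, the persistence of $\bar\Omega\subset\{x\cdot P/|P|\ge I(P,b)\}$ in the limit, and uniqueness of $\mathcal{N}_{\mathcal{R}}(x_0)$ at non-singular points via Snell's law and (A0-2). For the other end, $b_0>\kappa|P_0|$, do not rely on the lower bound in (\ref{3.6}). That bound amounts to $\min_{S^{n-1}}h(\cdot,P_k,b_k)\ge C_1$, which does not follow from $\rho_k\ge C_1$, because the known inequality $\rho_k\ge h$ points the wrong way. Use Lemma \ref{lem3.1}(b) instead: $\rho_k(x_0)\ge C_1$ gives $\kappa^2|P_k|^2-b_k^2\ge(\kappa^2-1)C_1^2$, hence $b_k\ge-\sqrt{\kappa^2|P_k|^2-(\kappa^2-1)C_1^2}$. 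This bound lies strictly above $\kappa|P_k|$ and survives the limit.
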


\begin{proof}
  We only consider the case $\kappa < -1$. The case $-1 < \kappa < 0$ is similar.
  
  $(a)$ From Lemma \ref{lem3.8}, by taking a subsequence if necessary, we may assume that there exist $P_{0} \in \bar{D}$ and $b_{0}$, such that $P_{k} \rightarrow P_{0}$ and $b_{k} \rightarrow b_{0}$ as $k \rightarrow \infty$. Hence we have
  $$\rho(x_{0}) = \lim\limits_{k \rightarrow \infty}\rho_{k}(x_{0}) = \lim\limits_{k \rightarrow \infty}h(x_{0},P_{k},b_{k}) = h(x_{0},P_{0},b_{0})$$
  and
  $$\rho(x) = \lim\limits_{k \rightarrow \infty}\rho_{k}(x) \geq \lim\limits_{k \rightarrow \infty}h(x,P_{k},b_{k}) = h(x,P_{0},b_{0})~\text{for all} ~x\in \bar{\Omega}.$$
  So $\Gamma(P_{0},b_{0})$ supports $\mathcal{R}$ at $\rho(x_{0})x_{0}$, then $\mathcal{R}$ is a near field refractor.
  
  $(b)$ Let $x_{0} \in \varlimsup\limits_{k\rightarrow \infty} \mathcal{T}_{\mathcal{R}_{k}}(K)$. Without loss of generality, we assume that $x_{0} \in \mathcal{T}_{\mathcal{R}_{k_{j}}}(K)$ for $j = 1,2,\ldots$, then there exists $P_{k_{j}} \in K$ such that 
  $$\rho_{k_{j}}(x) \geq h(x,P_{k_{j}},b_{k_{j}}) \quad \text{and} \quad \rho_{k_{j}}(x_{0}) = h(x_{0},P_{k_{j}},b_{k_{j}})$$
  and
  $$x \cdot P_{k_{j}} \geq I(P_{k_{j}},b_{k_{j}}).$$
  From (\ref{3.6}), we know that there exist $P_{0} \in K$ and $b_{0} > 0$, such that $P_{k_{j}} \rightarrow P_{0}$ and $b_{k_{j}} \rightarrow b_{0}$ as $j \rightarrow \infty$. From the proof of $(a)$, $\Gamma(P_{0},b_{0})$ supports $\mathcal{R}$ at $\rho(x_{0})x_{0}$, hence $x_{0} \in \mathcal{T}_{\mathcal{R}}(P_{0})$, so $x_{0} \in \mathcal{T}_{\mathcal{R}}(K)$.
  
  $(c)$ Since $\mathcal{T}_{\mathcal{R}}(F^{c}) = [\mathcal{T}_{\mathcal{R}}(F)]^{c} \cup [\mathcal{T}_{\mathcal{R}}(F^{c}) \cap \mathcal{T}_{\mathcal{R}}(F)]$, from $(b)$, we have
  \begin{equation}\label{3.10}
  \begin{aligned}
    \varlimsup_{k \rightarrow \infty}[\mathcal{T}_{\mathcal{R}_{k}}(G)]^{c} & \subset \varlimsup_{k \rightarrow \infty}([\mathcal{T}_{\mathcal{R}_{k}}(G)]^{c} \cup [\mathcal{T}_{\mathcal{R}_{k}}(G) \cap \mathcal{T}_{\mathcal{R}_{k}}(G^{c})])\\
    & \subset \varlimsup_{k \rightarrow \infty}\mathcal{T}_{\mathcal{R}_{k}}(G^{c}) \subset \mathcal{T}_{\mathcal{R}}(G^{c}) \\
    & =[\mathcal{T}_{\mathcal{R}}(G)]^{c} \cup [\mathcal{T}_{\mathcal{R}}(G^{c}) \cap \mathcal{T}_{\mathcal{R}}(G)].
    \end{aligned}
  \end{equation}
  Since $\mathcal{T}_{\mathcal{R}}(G^{c}) \cap \mathcal{T}_{\mathcal{R}}(G) \subset E$, taking complements in (\ref{3.10}), we get
  $$\mathcal{T}_{\mathcal{R}}(G) \subset \varliminf_{k\rightarrow \infty}\mathcal{T}_{\mathcal{R}_{k}}(G) \cup E.$$
\end{proof}

\section{Fresnel formula}\label{Section 4}

\sloppy{}

\subsection{Expression of the Fresnel coefficients}\label{Section 4.1}

From the previous analysis,  we know that when the incident light ray strikes the surface $\mathcal{R}$, it will split into refracted light ray and reflected light ray, so the energy of the incident light ray will be distributed to the refracted light ray and reflected light ray. This subsection briefly gives the energy distribution of reflected and refracted light ray  according to the electromagnetic field theory of light propagation. Although the derivation of the Fresnel coefficients can be found in \cite{GS15} and \cite{SJ25}, we still present it here for the sake of completeness.

Define $\mathbf{E} = \mathbf{E}(\mathbf{r},t)$ as electric field vector and $\mathbf{B} = \mathbf{B}(\mathbf{r},t)$ as magnetic field vector, where $\mathbf{r} = \mathbf{r}(x,y,z)$ represents a point in 3-d space and $t$ is the time, then we have the following system of Maxwell's equations absent from charges:
\begin{equation}\label{4.1}
  \left\{
   \begin{split}
     & \nabla\times \mathbf{E} = -\frac{\mu}{c} \frac{\partial \mathbf{B}}{\partial t},  \\
     & \nabla\times \mathbf{B} = -\frac{\epsilon}{c} \frac{\partial \mathbf{E}}{\partial t},  \\
     & \nabla \cdot (\epsilon \mathbf{E}) = 0,  \\
     & \nabla \cdot (\mu \mathbf{B}) = 0,
   \end{split}
    \right.
\end{equation}
where $c$ is the speed of light in vacuum, $\mu = \mu(x,y,z)$ is the magnetic permeability of the medium and $\epsilon = \epsilon(x,y,z)$ is the electric permittivity of the medium.

Assume that the waves are plane waves, that is, the waves have the same value at all points of any plane perpendicular to the direction of propagation, then from (\ref{4.1}), we have:
\begin{equation}\label{4.2}
  \left\{
  \begin{split}
      & \mathbf{E} = -\frac{c}{\epsilon \omega}(\mathbf{k} \times \mathbf{B}), \\
      & \mathbf{B} = \frac{c}{\mu \omega}(\mathbf{k} \times \mathbf{E}),
  \end{split}
  \right.
\end{equation}
where $c$ is the speed of light in free space, $\mathbf{k} = \displaystyle \frac{\omega}{v} \mathbf{s}$ represents the wave vector, $\omega$ represents the angular frequency of the electromagnetic wave, $v$ represents the speed of light ray in the medium and $\mathbf{s}$ is a unit vector.

The flow of the energy in an electromagnetic wave with electric field $\mathbf{E} = \mathbf{E}(\mathbf{r},t)$ and magnetic field $\mathbf{B} = \mathbf{B}(\mathbf{r},t)$ is given by Poynting vector
\begin{equation}\label{4.3}
  \mathbf{S} = \frac{c}{4\pi}\mathbf{E} \times \mathbf{B}.
\end{equation}
Then from (\ref{4.2}), we have
\begin{equation}\label{4.4}
  \mathbf{S} = \frac{c}{4\pi}\mathbf{E} \times (\frac{c}{\mu \omega} \mathbf{k} \times \mathbf{E}) = \frac{c}{4\pi} \sqrt{\frac{\epsilon}{\mu}}  \mathbf{E} \times \mathbf{s} \times \mathbf{E}.
\end{equation}

We denote quantities referring to the incident wave by the suffix $(i)$, to the refracted wave by $(t)$ and to the reflected wave by $(r)$. Choose a system of coordinates such that the normal $\nu$ to the interface $\Gamma$ at the point of incidence is on the $z$-axis and the $x$ and $y$ axes are on the plane perpendicular to $\nu$. So the tangent plane to $\Gamma$ at $P$ is the $xy$-plane and the incident plane is the $xz$-plane. Then each of the electric field and magnetic field vectors can be resolved into components parallel denoted by subscript $\parallel$ and perpendicular denoted by subscript $\bot$. Then we obtain:
\begin{equation*}
  \left\{
  \begin{split}
        \mathbf{E}^{(\mathbf{i})}(\mathbf{r},t) & = (-A_{\parallel}\cos\theta_{i} , A_{\bot} , A_{\parallel}\sin\theta_{i}) \cos(\omega(t - \frac{\mathbf{r} \cdot \mathbf{s^{(i)}}}{v_{1}}))  \\
       & = \mathbf{E}_{0}^{(\mathbf{i})} \cos(\omega(t - \frac{\mathbf{r} \cdot \mathbf{s^{(i)}}}{v_{1}})), \\
        \mathbf{E}^{(\mathbf{t})}(\mathbf{r},t) & = (-T_{\parallel}\cos\theta_{t} , T_{\bot} , T_{\parallel}\sin\theta_{t}) \cos(\omega(t - \frac{\mathbf{r} \cdot \mathbf{s^{(t)}}}{v_{2}})) \\
        & = \mathbf{E}_{0}^{(\mathbf{t})} \cos(\omega(t - \frac{\mathbf{r} \cdot \mathbf{s^{(t)}}}{v_{2}})), \\
        \mathbf{E}^{(\mathbf{r})}(\mathbf{r},t) & = (R_{\parallel}\cos\theta_{r} , R_{\bot} , R_{\parallel}\sin\theta_{r}) \cos(\omega(t - \frac{\mathbf{r} \cdot \mathbf{s^{(r)}}}{v_{1}})) \\
        & = \mathbf{E}_{0}^{(\mathbf{r})} \cos(\omega(t - \frac{\mathbf{r} \cdot \mathbf{s^{(r)}}}{v_{1}})),
  \end{split}
  \right.
\end{equation*}
and
\begin{equation*}
  \begin{cases}
    \mathbf{B}^{(\mathbf{i})}(\mathbf{r},t) & = \sqrt{\dfrac{\epsilon_{1}}{\mu_{1}}}(-A_{\bot}\cos\theta_{i} , -A_{\parallel} , A_{\bot}\sin\theta_{i}) \cos(\omega(t - \dfrac{\mathbf{r} \cdot \mathbf{s^{(i)}}}{v_{1}})) \\
    & =  \sqrt{\dfrac{\epsilon_{1}}{\mu_{1}}} \mathbf{B}_{0}^{(\mathbf{i})} \cos(\omega(t - \dfrac{\mathbf{r} \cdot \mathbf{s^{(i)}}}{v_{1}})), \\
    \mathbf{B}^{(\mathbf{t})}(\mathbf{r},t) & = \sqrt{\dfrac{\epsilon_{2}}{\mu_{2}}}(-A_{\bot}\cos\theta_{t} , -A_{\parallel} , A_{\bot}\sin\theta_{t}) \cos(\omega(t - \dfrac{\mathbf{r} \cdot \mathbf{s^{(t)}}}{v_{2}})) \\
    & =  \sqrt{\dfrac{\epsilon_{2}}{\mu_{2}}} \mathbf{B}_{0}^{(\mathbf{t})} \cos(\omega(t - \dfrac{\mathbf{r} \cdot \mathbf{s^{(t)}}}{v_{2}})), \\
    \mathbf{B}^{(\mathbf{r})}(\mathbf{r},t) & = \sqrt{\dfrac{\epsilon_{1}}{\mu_{1}}}(-A_{\bot}\cos\theta_{r} , -A_{\parallel} , A_{\bot}\sin\theta_{r}) \cos(\omega(t - \dfrac{\mathbf{r} \cdot \mathbf{s^{(r)}}}{v_{1}}))  \\
    & =  \sqrt{\dfrac{\epsilon_{1}}{\mu_{1}}} \mathbf{B}_{0}^{(\mathbf{r})} \cos(\omega(t - \dfrac{\mathbf{r} \cdot \mathbf{s^{(r)}}}{v_{1}})),
  \end{cases}
\end{equation*}
where $v_{1} = \displaystyle \frac{c}{\sqrt{\epsilon_{1} \mu_{1}}}$, $v_{2} = \displaystyle \frac{c}{\sqrt{\epsilon_{2} \mu_{2}}}$, $A$, $R$ and $T$ are the amplitude vectors and $\mathbf{s}^{(\mathbf{i})}$, $\mathbf{s}^{(\mathbf{t})}$ and $\mathbf{s}^{(\mathbf{r})}$ are the directions of propagation of the corresponding fields. The boundary conditions expressing the continuity of the tangential components of the electric and magnetic fields across the interface \cite{BW13}, then we have
\begin{equation}\label{4.5}
  \left\{
  \begin{split}
       & \mathbf{k} \times \mathbf{E}^{(\mathbf{i})}_{0} + \mathbf{k} \times \mathbf{E}^{(\mathbf{r})}_{0} =  \mathbf{k} \times \mathbf{E}^{(\mathbf{t})}_{0},  \\
       & \mathbf{k} \times \mathbf{B}^{(\mathbf{i})}_{0} + \mathbf{k} \times \mathbf{B}^{(\mathbf{r})}_{0} = \mathbf{k} \times \mathbf{B}^{(\mathbf{t})}_{0}.
  \end{split}
  \right.
\end{equation}
From (\ref{4.4}), we obtain
\begin{equation}\label{4.6}
  \left\{
  \begin{split}
       & A_{\bot} + R_{\bot} = T_{\bot}, \\
       & \cos\theta_{i}(A_{\parallel} - R_{\parallel}) = \cos\theta_{t}T_{\parallel}, \\
       & \frac{A_{\parallel}}{\sqrt{\dfrac{\epsilon_{1}}{\mu_{1}}}} +  \frac{R_{\parallel}}{\sqrt{\dfrac{\epsilon_{1}}{\mu_{1}}}} = \frac{T_{\parallel}}{\sqrt{\dfrac{\epsilon_{2}}{\mu_{2}}}}, \\
       & \cos\theta_{i}\left(\frac{A_{\bot}}{\sqrt{\dfrac{\epsilon_{1}}{\mu_{1}}}} - \frac{R_{\bot}}{\sqrt{\dfrac{\epsilon_{1}}{\mu_{1}}}}\right) = \cos\theta_{t} \frac{T_{\bot}}{\sqrt{\dfrac{\epsilon_{2}}{\mu_{2}}}}.
  \end{split}
  \right.
\end{equation}
Define the wave impedance of the medium as $z = \sqrt{\dfrac{\mu}{\epsilon}}$, then we obtain the following Fresnel formula from (\ref{4.6}):
\begin{equation}\label{4.7}
  \left\{
  \begin{split}
       & T_{\parallel} = \frac{2z_{1}\cos\theta_{i}}{z_{2} \cos\theta_{i} + z_{1}\cos\theta_{t}}A_{\parallel}, \\
       & T_{\bot} = \frac{2z_{1}\cos\theta_{i}}{z_{1}\cos\theta_{i} +  z_{2} \cos\theta_{t}}A_{\bot}, \\
       & R_{\parallel} =  \frac{z_{2} \cos\theta_{i} - z_{1}\cos\theta_{t}}{z_{2} \cos\theta_{i} + z_{1}\cos\theta_{t}}A_{\parallel},\\
       & R_{\bot} = \frac{z_{1}\cos\theta_{i} -  z_{2} \cos\theta_{t}}{z_{1}\cos\theta_{i} +  z_{2} \cos\theta_{t}}A_{\bot}.
  \end{split}
  \right.
\end{equation}
From Snell law (\ref{2.3}) and the fact that $x \cdot \nu = \cos\theta_{i}$, $m \cdot \nu = \cos\theta_{t}$, (\ref{4.7}) can be written as
\begin{equation}\label{4.8}
  \left\{
  \begin{split}
       & T_{\parallel} = \frac{2 z_{1} x\cdot (x-\kappa m)}{(z_{2} x + z_{1} m)\cdot (x - \kappa m)}A_{\parallel}, \\
       & T_{\bot} = \frac{2 z_{1} x\cdot (x-\kappa m)}{(z_{1} x + z_{2} m) \cdot (x - \kappa m)}A_{\bot}, \\
       & R_{\parallel} = \frac{(z_{2} x - z_{1} m)\cdot (x - \kappa m)}{(z_{2} x + z_{1} m)\cdot (x - \kappa m)}A_{\parallel}, \\
       & R_{\bot} = \frac{(z_{1} x - z_{2} m) \cdot (x - \kappa m)}{(z_{1} x + z_{2} m) \cdot (x - \kappa m)}A_{\bot}.
  \end{split}
  \right.
\end{equation}

Using Poynting vector (\ref{4.3}), the amount of energies of incident, transmitted and reflected waves leaving a unit area of the boundary per second is given by
\begin{equation*}
  \left\{
  \begin{split}
       & J^{(i)} = \vert \mathbf{S}^{\mathbf{i}} \vert \cos\theta_{i} = \frac{c}{4\pi} \sqrt{\frac{\epsilon_{1}}{\mu_{1}}} \vert \mathbf{E}_{0}^{(\mathbf{i})}\vert^{2} x \cdot \nu, \\
       & J^{(t)} = \vert \mathbf{S}^{\mathbf{t}} \vert \cos\theta_{t} = \frac{c}{4\pi} \sqrt{\frac{\epsilon_{2}}{\mu_{2}}} \vert \mathbf{E}_{0}^{(\mathbf{t})}\vert^{2} m \cdot \nu, \\
       & J^{(r)} = \vert \mathbf{S}^{\mathbf{r}} \vert \cos\theta_{r} = \frac{c}{4\pi} \sqrt{\frac{\epsilon_{1}}{\mu_{1}}} \vert \mathbf{E}_{0}^{(\mathbf{r})}\vert^{2} x \cdot \nu.
  \end{split}
  \right.
\end{equation*}
Then we can define the reflection and transmission coefficients as
\begin{equation*}
  \left\{
  \begin{split}
       & r_{\mathcal{R}}(x) = \frac{J^{(r)}}{J^{(i)}} = \left(\frac{\vert \mathbf{E}_{0}^{(\mathbf{r})} \vert}{\vert \mathbf{E}_{0}^{(\mathbf{i})} \vert}\right)^{2}, \\
       & t_{\mathcal{R}}(x) = \frac{J^{(t)}}{J^{(i)}} = \sqrt{\frac{\epsilon_{2}\mu_{1}}{\epsilon_{1}\mu_{2}}} \frac{m \cdot \nu}{x \cdot \nu}\left(\frac{\vert \mathbf{E}_{0}^{(\mathbf{t})} \vert}{\vert \mathbf{E}_{0}^{(\mathbf{i})}\vert}\right)^{2}.
  \end{split}
  \right.
\end{equation*}
From Fresnel formula (\ref{4.8}), we have
\begin{equation}\label{4.9}
  \begin{aligned}
  r_{\mathcal{R}}(x) & = \left[ \frac{z_{2} + \kappa z_{1} - (z_{1} + \kappa z_{2}) x \cdot m}{z_{2} - \kappa z_{1} + (z_{1} - \kappa z_{2}) x \cdot m} \right]^{2} \frac{A_{\parallel}^{2}}{A_{\parallel}^{2} + A_{\bot}^{2}} \\
  & \quad + \left[ \frac{z_{1} + \kappa z_{2} - (z_{2} + \kappa z_{1}) x \cdot m}{z_{1} - \kappa z_{2} + (z_{2} - \kappa z_{1}) x \cdot m} \right]^{2} \frac{A_{\bot}^{2}}{A_{\parallel}^{2} + A_{\bot}^{2}},
  \end{aligned}
\end{equation}
and by conservation of energy, we have
\begin{equation}\label{4.10}
  t_{\mathcal{R}}(x) = 1 - r_{\Gamma}(x).
\end{equation}

\begin{remark}
  Equations (\ref{4.9}) and (\ref{4.10}) are called Fresenl's Equation and $r_{\mathcal{R}}(x)$ and $t_{\mathcal{R}}(x)$ are called Fresnel coefficients.
  \label{rem4.1}
\end{remark}

\begin{remark}
    From Snell law (\ref{2.3}) and equations (\ref{4.9}) and (\ref{4.10}), $r_{\mathcal{R}}(x)$ and $t_{\mathcal{R}}(x)$ are functions only depending on $x$ and $\nu$.
    \label{rem4.2}
\end{remark}

\subsection{Properties of the Fresnel coefficients}\label{Section 4.2}

In this section, we discuss some properties of the Fresnel coefficients. We start with the boundedness of the Fresnel coefficients.

\begin{proposition}
  Suppose that $\bar{\Omega}$ and $\bar{D}$ satisfy the conditions in Remark \ref{rem2.1}, $\mathcal{R}$ is a refractor from $\bar{\Omega}$ to $\bar{D}$, then there exists a constant $C_{\varepsilon}$ associated with $\varepsilon$, such that $r_{\mathcal{R}}(x) \leq C_{\varepsilon}$ and $C_{\varepsilon} < t_{\mathcal{R}}(x) <1$.
  \label{prop4.1}
\end{proposition}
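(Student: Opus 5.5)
The starting point is formula (\ref{4.9}). It writes $r_{\mathcal{R}}(x)$ as a convex combination, with weights $\frac{A_{\parallel}^{2}}{A_{\parallel}^{2}+A_{\bot}^{2}}$ and $\frac{A_{\bot}^{2}}{A_{\parallel}^{2}+A_{\bot}^{2}}$, of the squares of two rational functions of the single variable $s = x\cdot m$:
$$F_{\parallel}(s)=\frac{z_{2}+\kappa z_{1}-(z_{1}+\kappa z_{2})s}{z_{2}-\kappa z_{1}+(z_{1}-\kappa z_{2})s},\qquad F_{\bot}(s)=\frac{z_{1}+\kappa z_{2}-(z_{2}+\kappa z_{1})s}{z_{1}-\kappa z_{2}+(z_{2}-\kappa z_{1})s}.$$
So it suffices to bound $F_{\parallel}^{2}$ and $F_{\bot}^{2}$ uniformly by some $C_{\varepsilon}<1$ on the admissible range of $s$. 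Once that is done, $r_{\mathcal{R}}(x)\le \max(\sup F_{\parallel}^{2},\sup F_{\bot}^{2})=:C_{\varepsilon}$, and (\ref{4.10}) gives $t_{\mathcal{R}}=1-r_{\mathcal{R}}\ge 1-C_{\varepsilon}$. The bound $t_{\mathcal{R}}<1$ follows if $r_{\mathcal{R}}>0$, which holds unless a numerator vanishes. I would state the conclusion in the form $r_{\mathcal{R}}\le C_{\varepsilon}<1$, $0<1-C_{\varepsilon}\le t_{\mathcal{R}}\le 1$; this is clearly the intended content, since (A5) uses $\frac{1}{1-C_{\varepsilon}}$.

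Next I identify the range of $s$. Since $\mathcal{R}$ refracts $\bar\Omega$ into $\bar D$, the refracted direction $m$ is $\frac{P-\rho(x)x}{|P-\rho(x)x|}$. The refractor lies in $\mathcal{C}_{r_0}$ with $r_0$ small relative to $\inf|P|$, so $m$ is close to $P/|P|$. By (\ref{2.6}) or (\ref{2.7}) (equivalently (A4)/(B4)), $s\in[\frac1\kappa+\varepsilon,1]$ when $\kappa<-1$ and $s\in[\kappa+\varepsilon,1]$ when $-1<\kappa<0$. I would simply assume the hypothesis of Remark \ref{rem2.1} applied to the actual refracted directions, which is how the proposition is phrased. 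On this compact interval the functions are continuous provided the denominators do not vanish. In the negative-index medium the impedance $z_2=\sqrt{\mu_2/\epsilon_2}$ is positive, since both $\mu_2,\epsilon_2<0$, and $z_1>0$. For $s\ge\frac1\kappa$ and $\kappa<0$ we have $x\cdot\nu$ and $m\cdot\nu$ of the correct signs through $\lambda=\Phi(x\cdot\nu)>0$, so each denominator $(z_2x+z_1m)\cdot(x-\kappa m)=\lambda(z_2\cos\theta_i+z_1\cos\theta_t)$ is a positive multiple of a sum of positive terms. Hence the denominators stay bounded away from zero on the compact $s$-range.

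The main step is showing $|F(s)|<1$ strictly on the closed interval. Then compactness gives $\sup F^{2}<1$, and that supremum depends on $\varepsilon$ through the left endpoint. For $|F|<1$ I would use the physical form: $|F_{\parallel}|=\frac{|z_2\cos\theta_i-z_1\cos\theta_t|}{z_2\cos\theta_i+z_1\cos\theta_t}<1$ exactly when $\cos\theta_i,\cos\theta_t>0$ strictly, and similarly for $F_{\bot}$. Thus the obstacle is checking that $\cos\theta_i=x\cdot\nu$ and $\cos\theta_t=m\cdot\nu$ stay bounded away from $0$ when $s\ge \frac1\kappa+\varepsilon$ (resp. $\kappa+\varepsilon$). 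The angle $\theta_t$ reaches $90^\circ$ exactly at the grazing limit $s=\frac1\kappa$ (resp. $s=\kappa$) of Lemma \ref{lem2.1}. I would verify this by solving (\ref{2.3}) for $\nu$, giving $\nu=(x-\kappa m)/|x-\kappa m|$, and computing
$$x\cdot\nu=\frac{1-\kappa s}{|x-\kappa m|},\qquad m\cdot\nu=\frac{s-\kappa}{|x-\kappa m|}.$$
Because $\kappa<0$, the quantity $1-\kappa s$ is bounded below by a positive constant depending on $\varepsilon$: in the case $\kappa<-1$ the smallest value over the range is $1-\kappa(\frac1\kappa+\varepsilon)=-\kappa\varepsilon>0$. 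Likewise $s-\kappa$ is bounded below by a positive constant. Since $|x-\kappa m|\le 1+|\kappa|$, both cosines are at least a constant $c_{\varepsilon}>0$. Finally, a direct estimate gives $|F|\le \frac{\max(z_1,z_2)-\min(z_1,z_2)c_\varepsilon}{\cdots}$; more cleanly, continuity plus the strict inequality on a compact interval yields $C_{\varepsilon}<1$.
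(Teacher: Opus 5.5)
Your argument is correct, but it reaches the bound differently from the paper. The paper sets $\sigma=z_2/z_1$ and writes $r_{\mathcal{R}}(x)=\psi(x\cdot m)$ with $\psi=\alpha p^2+\beta q^2$. It computes $p'(t)$ and $q'(t)$, both equal to $2\sigma(\kappa^2-1)$ divided by a square, so $p$ and $q$ are monotone on the admissible interval. It then evaluates $p^2,q^2$ at the two endpoints and asserts that, for $\varepsilon$ small, the maximum sits at the grazing endpoint $\frac{1}{\kappa}+\varepsilon$ (resp.\ $\kappa+\varepsilon$); that value is $C_\varepsilon$. This identifies the sharp constant. As written, however, it never checks that the denominators stay nonzero on the interval, nor that $C_\varepsilon<1$. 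The latter is exactly what (A5)/(B5) and the energy inequality in the proof of Theorem \ref{thm6.1} require.

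Your rewriting $F_{\parallel}(s)=\frac{z_2(1-\kappa s)-z_1(s-\kappa)}{z_2(1-\kappa s)+z_1(s-\kappa)}$, and the same with $z_1,z_2$ interchanged for $F_{\bot}$, is a purely algebraic identity. So you do not even need the sign of $\lambda$: everything reduces to positivity of $1-\kappa s$ and $s-\kappa$ on the interval. Via $\frac{|u-v|}{u+v}=1-\frac{2\min(u,v)}{u+v}$ this gives $C_\varepsilon<1$ with an explicit, non-sharp dependence on $\varepsilon,\kappa,z_1,z_2$. No smallness of $\varepsilon$ is needed, and it shows transparently that $r_{\mathcal{R}}\to 1$ only at grazing.

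One small slip: for $\kappa<-1$ the cosine that degenerates at $s=\frac{1}{\kappa}$ is $\cos\theta_i$ (since $1-\kappa s\to 0$), not $\cos\theta_t$. The limit $\theta_t\to 90^\circ$ occurs at the endpoint $s=\kappa$ in the case $-1<\kappa<0$. Since you bound both cosines from below, this does not affect the proof.

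Your reading of the conclusion as $1-C_\varepsilon\le t_{\mathcal{R}}\le 1$ is also more accurate than the statement as printed. The strict inequality $t_{\mathcal{R}}<1$ can fail where a Fresnel numerator vanishes, e.g.\ at Brewster-type incidence when $\beta=0$.
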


\begin{proof}
  For simplicity, let $\sigma = \displaystyle \frac{z_{2}}{z_{1}} = \displaystyle \sqrt{\frac{\mu_{2}\epsilon_{1}}{\mu_{1}\epsilon_{2}}} > 0$ and introduce a function
\begin{equation}\label{4.11}
  \psi(t) := \left[\frac{\sigma + \kappa -(1 + \kappa \sigma)t}{\sigma - \kappa + (1 - \kappa \sigma)t}\right]^{2} \alpha + \left[\frac{1 + \kappa \sigma - (\sigma + \kappa)t}{1 - \kappa \sigma + (\sigma - \kappa)t}\right]^{2} \beta,
\end{equation}
where $\alpha = \dfrac{A_{\parallel}^{2}}{A_{\parallel}^{2} + A_{\bot}^{2}}$, $\beta = \dfrac{A_{\bot}^{2}}{A_{\parallel}^{2} + A_{\bot}^{2}}$. Then $r_{\mathcal{R}}(x) = \psi(x \cdot m)$, $t_{\mathcal{R}}(x) = 1- \psi(x \cdot m)$. We denote
$$p(t) = \frac{\sigma + \kappa -(1 + \kappa \sigma)t}{\sigma - \kappa + (1 - \kappa \sigma)t} \quad \text{and} \quad q(t) = \frac{1 + \kappa \sigma - (\sigma + \kappa)t}{1 - \kappa \sigma + (\sigma - \kappa)t}.$$

For the case $\kappa < -1$, from (\ref{2.6}), we know $t\in\left[\dfrac{1}{\kappa} + \varepsilon,1\right]$.

For $p(t)$, we have $p'(t) = \displaystyle \frac{2 \sigma (\kappa^{2} - 1)}{[\sigma - \kappa + (1 - \kappa \sigma)t]^{2}}$.  For $\kappa <-1$, then $\kappa^{2} - 1 >0$, so $p(t)$ increases on $\left[\dfrac{1}{\kappa} + \varepsilon,1\right]$. Hence 
$$p^{2}(t)_{\max} = \max\left\{p^{2}\left(\dfrac{1}{\kappa} + \varepsilon\right), p^{2}(1)\right\}.$$
We have $p^{2}(1) = \left[\dfrac{\sigma - 1}{\sigma + 1}\right]^{2}$, $p^{2}\left(\dfrac{1}{\kappa} + \varepsilon\right) = \left[\dfrac{\kappa^{2} - 1 - \varepsilon \kappa(1 + \kappa \sigma)}{1 - \kappa^{2} + \varepsilon \kappa(1 - \kappa \sigma)}\right]^{2}$. Since $\varepsilon$ is small enough, then $p^{2}(t)_{\max} = p^{2}\left(\dfrac{1}{\kappa} + \varepsilon\right)$.

For $q(t)$, we have $q'(t) = \dfrac{2 \sigma (\kappa^{2} - 1)}{[1 - \kappa \sigma + (\sigma - \kappa)t]^{2}}$. For $\kappa <-1$, then $\kappa^{2} - 1 >0$, then $q(t)$ increases on $\left[\dfrac{1}{\kappa} + \varepsilon,1\right]$. Hence 
$$q^{2}(t)_{\max} = \max\left\{q^{2}\left(\dfrac{1}{\kappa} + \varepsilon\right), q^{2}(1)\right\}.$$
We have $q^{2}(1) = \left[-\dfrac{\sigma - 1}{\sigma + 1}\right]^{2}$, $q^{2}\left(\dfrac{1}{\kappa} + \varepsilon\right) = \left[\dfrac{\sigma (\kappa^{2} - 1) - \varepsilon \kappa^{2}(\sigma + 1)}{\sigma (1 - \kappa^{2}) + \varepsilon \kappa (\sigma - \kappa)}\right]^{2}$. Since $\varepsilon$ is small enough, then $q^{2}(t)_{\max} = q^{2}\left(\dfrac{1}{\kappa} + \varepsilon\right)$.

For the case $-1 < \kappa < 0$, from (\ref{2.7}), we know $t \in [\kappa + \varepsilon, 1]$.

For $p(t)$, we have $p'(t) = \displaystyle \frac{2 \sigma (\kappa^{2} - 1)}{[\sigma - \kappa + (1 - \kappa \sigma)t]^{2}}$.  For $-1< \kappa <0$, then $\kappa^{2} - 1 <0$, so $p(t)$ decreases on $[\kappa + \varepsilon,1]$. Hence 
$$p^{2}(t)_{\max} = \max\{p^{2}(\kappa + \varepsilon), p^{2}(1)\}.$$
We have $p^{2}(1) = \left[\dfrac{\sigma - 1}{\sigma + 1}\right]^{2}$, $p^{2}(\kappa + \varepsilon) = \left[\dfrac{\sigma(1 - \kappa^{2}) - \varepsilon (1 + \kappa \sigma)}{\sigma(1 - \kappa^{2}) + \varepsilon (1 - \kappa \sigma)}\right]^{2}$. Since $\varepsilon$ is small enough, then $p^{2}(t)_{\max} = p^{2}(\kappa + \varepsilon)$.
  
For $q(t)$, we have $q'(t) = \dfrac{2 \sigma (\kappa^{2} - 1)}{[1 - \kappa \sigma + (\sigma - \kappa)t]^{2}}$. For $-1< \kappa <0$, then $\kappa^{2} - 1 <0$, then $q(t)$ decreases on $[\kappa + \varepsilon,1]$. Hence 
$$q^{2}(t)_{\max} = \max\{q^{2}(\kappa + \varepsilon), q^{2}(1)\}.$$
We have $q^{2}(1) = \left[-\dfrac{\sigma - 1}{\sigma + 1}\right]^{2}$, $q^{2}(\kappa + \varepsilon) = \left[\dfrac{1 - \kappa^{2} - \varepsilon(\sigma + \kappa)}{1 - \kappa^{2} + \varepsilon(\sigma - \kappa)}\right]^{2}$. Since $\varepsilon$ is small enough, then $q^{2}(t)_{\max} = q^{2}(\kappa + \varepsilon)$.
  
From above analysis, there exists a constant $C_{\varepsilon}$ associated with $\varepsilon$, such that $r_{\mathcal{R}}(x) \leq C_{\varepsilon}$, and for $t_{\mathcal{R}}(x) = 1 - r_{\mathcal{R}}(x)$, then $C_{\varepsilon} < t_{\mathcal{R}}(x) <1$.
\end{proof}
 
Based on the boundedness of Fresnel coefficients, we discuss some other properties of them. The proofs for the following properties for the case $\kappa < -1$ and the case $-1 < \kappa < 0$ are also analogous, so we only prove the following properties for the case $\kappa < -1$.

\begin{proposition}
  Suppose that $\mathcal{R} = \{\rho(x)x;~x\in \bar{\Omega}\}$ is a near field refractor from $\bar{\Omega}$ to $\bar{D}$ and $E$ is the singular set of $\mathcal{R}$, then $t_{\mathcal{R}}(x)$ is continuous on $\bar{\Omega}\backslash E$.
  \label{prop4.2}
\end{proposition}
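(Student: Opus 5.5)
We will prove continuity of $t_{\mathcal{R}}$ at every $x_{0}\in\bar{\Omega}\backslash E$ by a sequential argument that reduces everything to the continuity of the refracted direction $m$. We treat only the case $\kappa<-1$; the case $-1<\kappa<0$ is identical with $\Gamma$ replaced by $\mathcal{O}$, using Remark \ref{rem3.3} and Lemma \ref{lem3.4}. The starting point is the proof of Proposition \ref{prop4.1}, which shows $t_{\mathcal{R}}(x)=1-\psi(x\cdot m)$ with $\psi$ as in (\ref{4.11}). The function $\psi$ is a rational function of $t$ whose denominators do not vanish on $[\frac{1}{\kappa}+\varepsilon,1]$; this was implicitly used there, since $p$ and $q$ are monotone and finite on that interval. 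Hence $\psi$ is continuous on this interval, and it suffices to show that $x\mapsto x\cdot m(x)$ is continuous at $x_{0}$. Here $m(x)$ is the refracted direction at $\rho(x)x$, namely the unit vector pointing from $\rho(x)x$ toward the point $P\in\mathcal{N}_{\mathcal{R}}(x)$. Equivalently, by Remark \ref{rem4.2}, it suffices to show that the normal $\nu(x)$ of $\mathcal{R}$ at $\rho(x)x$ is continuous at $x_{0}$.

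Fix $x_{0}\notin E$, so that $\mathcal{N}_{\mathcal{R}}(x_{0})$ is a single point $P_{0}$, with a supporting oval $\Gamma(P_{0},b_{0})$. Take $x_{k}\to x_{0}$ with $x_{k}\in\bar{\Omega}\backslash E$ (or arbitrary points, choosing any supporting oval). Pick $P_{k}\in\mathcal{N}_{\mathcal{R}}(x_{k})$ and $b_{k}$ such that $\Gamma(P_{k},b_{k})$ supports $\mathcal{R}$ at $\rho(x_{k})x_{k}$.

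Since $\rho$ is Lipschitz by Lemma \ref{lem3.2}, and bounded below by a positive constant because of Lemma \ref{lem3.1}(a) together with compactness, Lemma \ref{lem3.8} applied to the constant sequence $\mathcal{R}_{k}=\mathcal{R}$ shows that $b_{k}$ is bounded. Since $\bar{D}$ is compact, any subsequence has a further subsequence with $P_{k}\to P'$ and $b_{k}\to b'$. Passing to the limit in $\rho(x)\ge h(x,P_{k},b_{k})$ and $\rho(x_{k})=h(x_{k},P_{k},b_{k})$, exactly as in Lemma \ref{lem3.9}(a), and using continuity of $h$ in $(x,P,b)$, we obtain that $\Gamma(P',b')$ supports $\mathcal{R}$ at $\rho(x_{0})x_{0}$. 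Therefore $P'\in\mathcal{N}_{\mathcal{R}}(x_{0})=\{P_{0}\}$, so $P'=P_{0}$.

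Next, $z_{k}=\rho(x_{k})x_{k}\to\rho(x_{0})x_{0}$. Lemma \ref{lem3.7} then gives convergence of the normals $\nu_{k}(z_{k})\to\nu(z_{0})$ for the oval $\Gamma(P_{0},b')$. Moreover
\[
m_{k}=\frac{P_{k}-z_{k}}{|P_{k}-z_{k}|}\to\frac{P_{0}-z_{0}}{|P_{0}-z_{0}|}=m(x_{0}).
\]
The denominators are bounded away from zero by Lemma \ref{lem3.1}(c), or simply because $\mathcal{R}\subset\mathcal{C}_{r_{0}}$ and $r_{0}<\inf_{\bar{D}}|P|$. Since every subsequence has a further subsequence along which $x_{k}\cdot m_{k}\to x_{0}\cdot m(x_{0})$, the whole sequence converges. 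Continuity of $\psi$ then gives $t_{\mathcal{R}}(x_{k})\to t_{\mathcal{R}}(x_{0})$.

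The main obstacle is the identification of the limit oval, i.e.\ showing $P'=P_{0}$. This is exactly where $x_{0}\notin E$ is used. Without it, different subsequences could converge to different targets, and $m$ (hence $t_{\mathcal{R}}$) would jump. A secondary point is uniqueness of the normal at $x_{0}$. If several $b$ could support at $x_{0}$ with the same $P_{0}$, one should note that the refracted direction $m$ depends only on $z_{0}$ and $P_{0}$, not on $b$. It is therefore cleaner to express $t_{\mathcal{R}}$ through $x\cdot m$ rather than through $\nu$, which avoids this issue entirely.
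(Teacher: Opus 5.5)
Your proposal is correct and follows essentially the paper's argument: you extract a convergent subsequence of the supporting parameters $(P_k,b_k)$ via Lemma \ref{lem3.8}, pass to a limiting supporting oval at $x_0$ as in Lemma \ref{lem3.9}, and conclude by continuity of the Fresnel expression. The differences are cosmetic. You argue directly by subsequences using $x\cdot m$, and you make explicit that $x_0\notin E$ forces $P'=P_0$. The paper instead works with $\nu$ and splits continuity into upper and lower semicontinuity via sublevel sets.
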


\begin{proof}
  To prove $t_{\mathcal{R}}(x)$ is continuous on $\bar{\Omega}\backslash E$, we only need to prove $r_{\mathcal{R}}(x)$ is continuous on $\bar{\Omega}\backslash E$. From previous analysis, we can assume that there exists a constant $C_{1}>0$, such that $C_{1} \leq \rho(x) \leq r_{0}$. From (\ref{4.9}),  we know that $r_{\mathcal{R}}(x)$ is a function $\phi(x) = G(x,\nu(x))$ on $\bar{\Omega}\backslash E$, and $G(x,m)$ is continuous on $\bar{\Omega} \times \bar{D}$.
  
  To prove $r_{\mathcal{R}}(x)$ is continuous on $\bar{\Omega}\backslash E$, we only need to prove $r_{\mathcal{R}}(x)$ is both upper and lower semi-continuous on $\bar{\Omega}\backslash E$. We first prove  $r_{\mathcal{R}}(x)$ is upper semi-continuous on $\bar{\Omega}\backslash E$, that is, for any $\alpha \in \mathbb{R}$, the set $M_{\alpha} = \{x \in \bar{\Omega}\backslash E;~\phi(x)\leq \alpha\}$ is a closed set in $\bar{\Omega}\backslash E$. Then we need to prove that for a sequence $x_{k} \in M_{\alpha}$ and $x_{0} \in \bar{\Omega}\backslash E$, if $x_{k} \rightarrow x_{0}$, then $x_{0} \in M_{\alpha}$.

  We claim that for $x_{k},x_{0} \in \bar{\Omega}\backslash E$, if $x_{k} \rightarrow x_{0}$, then there exists a subsequence $x_{k_{j}}$, such that $\nu(x_{k_{j}})\rightarrow \nu(x_{0})$ as $j\rightarrow \infty$.
  
  Indeed, suppose that $\Gamma(P_{k},b_{k})$ support $\mathcal{R}$ at $\rho(x_{k})x_{k}$, then we have
  $$ \rho(x_{k}) = h(x_{k},P_{k},b_{k})  \quad \text{and} \quad  \rho(x) \geq h(x,P_{k},b_{k}) ~ \text{for all} ~ x \in \bar{\Omega}. $$
  Hence from Lemma \ref{lem3.8}, there exist subsequences $b_{k_{j}} \rightarrow b_{0}$ and $P_{k_{j}} \rightarrow P_{0}$ as $j \rightarrow \infty$. From Lemma \ref{lem3.9}, the claim holds true.
  
  Consequently, if $x_{k} \in M_{\alpha}$, then $\phi(x_{k}) = G(x_{k},\nu(x_{k})) \leq \alpha$. However, from claim, there exists a subsequence $x_{k_{j}}$, such that $\nu(x_{k_{j}})\rightarrow \nu(x_{0})$ as $j\rightarrow \infty$. Then since $G$ is continuous, we know that $r_{\mathcal{R}}(x)$ is upper semi-continuous on $\bar{\Omega}\backslash E$.

  Using the similar argument, we can prove that $r_{\mathcal{R}}(x)$ is lower semi-continuous on $\bar{\Omega}\backslash E$. Then $r_{\mathcal{R}}(x)$ is continuous on $\bar{\Omega}\backslash E$.
\end{proof}

\begin{remark}
  From Lemmas \ref{lem3.2} and \ref{lem3.4}, the singular points set of $\rho$ is a null set, then $r_{\mathcal{R}}(x)$ is well-defined on $\Omega$ almost everywhere. Hence $r_{\mathcal{R}}(x)$ is measurable in $\Omega$.
  \label{rem4.3}
\end{remark}

From above analysis, we can get the following lemma and theorem, which are useful in proving the existence of the weak solution.

\begin{lemma}
  Let $\mathcal{R}_{k}$ and $\mathcal{R}$ be refractors with defining functions $\rho_{k}(x)$ and $\rho(x)$, the corresponding Fresnel coefficients are $t_{k}$ and $t$. Suppose that $\rho_{k}\rightarrow \rho$ pointwise in $\bar{\Omega}$ and there exists a constant $C_{1}>0$, such that $C_{1} \leq \rho_{k}(x) \leq r_{0}$ in $\bar{\Omega}$. Then for $y \notin E$, there exists a subsequence $t_{k_{j}}(y)\rightarrow t(y)$ as $j\rightarrow \infty$, where $E$ is the union of singular points of refractors $\mathcal{R}_{k}$ and $\mathcal{R}$.
  \label{lem4.1}
\end{lemma}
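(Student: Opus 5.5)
We prove Lemma \ref{lem4.1} in the case $\kappa<-1$; the case $-1<\kappa<0$ is identical after replacing $\Gamma(P,b)$ by $\mathcal{O}(P,b)$ and Lemma \ref{lem3.7} by Remark \ref{rem3.3}. By Remark \ref{rem4.2} and (\ref{4.9})--(\ref{4.10}), $t_{k}(y)=1-G(y,\nu_{k}(y))$ and $t(y)=1-G(y,\nu(y))$, where $G$ is continuous in its arguments. Here $\nu_{k}(y)$ and $\nu(y)$ are the normals to $\mathcal{R}_{k}$ and $\mathcal{R}$ at $\rho_{k}(y)y$ and $\rho(y)y$. They are well defined for $y\notin E$, and each equals the normal of any supporting oval at that point. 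Hence it suffices to find a subsequence with $\nu_{k_{j}}(y)\rightarrow\nu(y)$.

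\textbf{Step 1 (compactness of supporting ovals).} Fix $y\notin E$. For each $k$, choose $P_{k}\in\bar{D}$ and $b_{k}$ such that $\Gamma(P_{k},b_{k})$ supports $\mathcal{R}_{k}$ at $\rho_{k}(y)y$. Since $C_{1}\le\rho_{k}\le r_{0}$, Lemma \ref{lem3.8} bounds $b_{k}$ via (\ref{3.6}), and $\bar{D}$ is compact. So we can extract a subsequence with $P_{k_{j}}\rightarrow P_{0}\in\bar{D}$ and $b_{k_{j}}\rightarrow b_{0}$.

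\textbf{Step 2 (the limit oval supports $\mathcal{R}$ at $y$).} Pass to the limit in $\rho_{k_{j}}(x)\ge h(x,P_{k_{j}},b_{k_{j}})$ for all $x\in\bar{\Omega}$ and in $\rho_{k_{j}}(y)=h(y,P_{k_{j}},b_{k_{j}})$. This uses only pointwise convergence $\rho_{k}\to\rho$ and continuity of $h$ in $(P,b)$, as in the proof of Lemma \ref{lem3.9}(a). The result is that $\Gamma(P_{0},b_{0})$ supports $\mathcal{R}$ at $\rho(y)y$.

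\textbf{Step 3 (normals converge).} Apply Lemma \ref{lem3.7} with $z_{j}=\rho_{k_{j}}(y)y\rightarrow z_{0}=\rho(y)y$. It gives $\nu_{k_{j}}(z_{j})\rightarrow$ the normal of $\Gamma(P_{0},b_{0})$ at $z_{0}$. Since $y$ is not a singular point of $\mathcal{R}$, this normal is $\nu(y)$. Likewise, since $y$ is not singular for $\mathcal{R}_{k_{j}}$, $\nu_{k_{j}}(z_{j})=\nu_{k_{j}}(y)$. Continuity of $G$ then gives $t_{k_{j}}(y)\rightarrow t(y)$.

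\textbf{Main obstacle.} The delicate points are the limit passage in Step 2 and the validity of the hypotheses of Lemma \ref{lem3.7}.
\begin{itemize}
  \item Step 2 must keep $b_{0}$ in the admissible range $\kappa|P_{0}|<b_{0}<|P_{0}|$. The lower bound comes from (\ref{3.6}), since $C_{1}>0$. The upper bound should follow from $\rho\ge C_{1}$ together with Lemma \ref{lem3.1}(b), because $\max h>0$ forces $b_{0}^{2}<\kappa^{2}|P_{0}|^{2}$ and hence $b_{0}<|P_{0}|$ is preserved.
  \item Step 2 also needs the domain condition $\bar{\Omega}\subset\{x\cdot P_{0}/|P_{0}|\ge I(P_{0},b_{0})\}$, which follows by continuity of $I$.
  \item Lemma \ref{lem3.7} needs $z_{0}\neq 0$ and $z_{0}\neq P_{0}$, so that the normal formula is defined. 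Both hold because $\rho\ge C_{1}>0$ and $\mathcal{R}\subset\mathcal{C}_{r_{0}}$ while $\bar{D}$ stays a positive distance away by $(A0\text{-}2)$.
\end{itemize}
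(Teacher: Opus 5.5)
Your Steps 1--3 are essentially the paper's proof. The paper also fixes $y\notin E$, takes ovals $\Gamma(P_k,b_k)$ supporting $\mathcal{R}_k$ at $\rho_k(y)y$, and extracts $P_{k_j}\to P_0$, $b_{k_j}\to b_0$ via Lemma~\ref{lem3.8}. It then notes that $\Gamma(P_0,b_0)$ supports $\mathcal{R}$ at $\rho(y)y$ and concludes $\nu_{k_j}(y)\to\nu(y)$, hence $t_{k_j}(y)\to t(y)$. Your version is more careful: you invoke Lemma~\ref{lem3.7} explicitly, note that only pointwise convergence is needed in Step 2, and check $z_0\neq 0$ and $z_0\neq P_0$. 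That part is fine.

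The one wrong inference is in your first ``main obstacle'' item. From Lemma~\ref{lem3.1}(b) and $\rho\ge C_1$ you get $b_0^2\le\kappa^2|P_0|^2-(\kappa^2-1)C_1^2<\kappa^2|P_0|^2$. Because $|\kappa|>1$, this only yields $|b_0|<|\kappa||P_0|$, not $b_0<|P_0|$.

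What this inequality does give is the \emph{lower} bound $b_0>\kappa|P_0|$. Use it for that, rather than the first inequality in (\ref{3.6}). The derivation of that inequality multiplies the lower estimate of Lemma~\ref{lem3.1}(c) by $\kappa<0$ without reversing the inequality, so it is not reliable.

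The upper bound comes instead from Lemma~\ref{lem3.1}(a):
$\frac{b_k-\kappa|P_k|}{1-\kappa}=\min h(\cdot,P_k,b_k)\le\rho_k(y)\le r_0$.
Passing to the limit gives $b_0\le\kappa|P_0|+(1-\kappa)r_0<|P_0|$, since $r_0<\inf_{\bar D}|P|$ by $(A0\text{-}2)$.

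This slip only affects admissibility of the limit oval in the sense of Definition~\ref{def3.1}. Identifying the normals only needs the touching relation $h(\cdot,P_0,b_0)\le\rho$ with equality at $y$, together with $z_0\neq 0$ and $z_0\neq P_0$, which you justify correctly.

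Finally, the limit $\nu(y)$ is the normal of $\mathcal{R}$ at a regular point, so it does not depend on the subsequence. The standard sub-subsequence argument therefore upgrades your conclusion to $t_k(y)\to t(y)$ along the whole sequence. That is the form actually needed in Theorem~\ref{thm4.1}(a),(b).
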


\begin{proof}
  Given $y \notin E$, then there exist $b_{k}$ and $P_{k} \in \bar{D}$, such that 
  $$\rho_{k}(y) = h(x,P_{k},b_{k}) \quad \text{and} \quad \rho_{k}(z) \geq h(x,P_{k},b_{k}) ~\text{for all}~ z\in \bar{\Omega},$$
  and satisfies $x \cdot P_{k} \geq I(P_{k},b_{k})\vert P_{k} \vert.$ Since we have $C_{1} \leq \rho_{k}(x) \leq r_{0}$, then we have
  $$C_{1}(1 - \kappa) + \kappa \vert P_{k} \vert \leq b_{k} \leq \sqrt{\kappa^{2} \vert P_{k} \vert^{2} - (\kappa^{2} - 1)C_{1}^{2}}.$$
  So $b_{k}$s are bounded, then there exist subsequence $b_{k_{j}}\rightarrow b$ and $P_{k_{j}}\rightarrow P_{0} \in \bar{D}$. Hence $\Gamma(P_{0},b_{0})$ supports $\mathcal{R}$ at $y\rho(y)$, which implies $y \in \mathcal{T}_{\mathcal{R}}(m)$. For $y \notin E$, the normal $\nu_{k_{j}}(y)$ to the refracting oval $\Gamma(P_{k_{j}},b_{k_{j}})$ equals to the normal to the refractor $\mathcal{R}_{k}$ at $y$, and the normal $\nu(y)$ to the refracting oval $\Gamma(P,b)$ equals to the normal to the refractor $\mathcal{R}$ at $y$. Since $\Gamma(P_{k_{j}},b_{k_{j}}) \rightarrow \Gamma(P,b)$ as $j\rightarrow \infty$, then $\nu_{k_{j}}(y) \rightarrow \nu(y)$ for $y \notin E$ as  $j\rightarrow \infty$. So we have $t_{k_{j}}(y)\rightarrow t(y)$ as $j \rightarrow \infty$.
\end{proof}

\begin{theorem}
  Assume that the hypotheses and notations of Lemma \ref{lem4.1} hold, and let $F \subset \bar{D}$ be a compact set, set $F_{k} = \mathcal{T}_{\mathcal{R}_{k}}(F)$. Then for all $y \notin E$, we have

  \begin{align}
      & (a) \varlimsup_{k \rightarrow \infty} \chi_{F_{k}}(y)t_{k}(y) = t(y)\varlimsup_{k \rightarrow \infty} \chi_{F_{k}}(y),\label{4.12} \\
      & (b) \varliminf_{k \rightarrow \infty} \chi_{F_{k}}(y)t_{k}(y) = t(y)\varliminf_{k \rightarrow \infty} \chi_{F_{k}}(y),\label{4.13} \\
      & (c) \varlimsup_{k \rightarrow \infty} \chi_{F_{k}}(y)t_{k}(y) \leq \chi_{\mathcal{T}_{\mathcal{R}}(F)}(y)t(y),\label{4.14} \\
      & (d) \varlimsup_{k \rightarrow \infty}\int_{\bar{\Omega}}\chi_{F_{k}}(x)t_{k}(x)f(x)dx \leq \int_{\mathcal{T}_{\mathcal{R}}(F)}t(x)f(x)dx.\label{4.15}
  \end{align}
  \label{thm4.1}
\end{theorem}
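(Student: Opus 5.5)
The idea is to reduce (a) and (b) to a pointwise convergence statement for $t_{k}(y)$ at points $y\notin E$. We then get (c) from Lemma \ref{lem3.9}(b), and (d) from Fatou's lemma.

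\emph{Step 1: pointwise convergence.} Fix $y\notin E$. Lemma \ref{lem4.1} only gives convergence of some subsequence $t_{k_j}(y)\to t(y)$. First I upgrade this to convergence of the full sequence by the usual subsequence argument. Any subsequence of $\mathcal{R}_k$ still satisfies the hypotheses of Lemma \ref{lem4.1} (pointwise convergence and the bounds $C_1\le\rho_k\le r_0$), so it has a further subsequence along which $t_{k}(y)\to t(y)$. The limit $t(y)$ is independent of the subsequence, because $y\notin E$ and so the normal of $\mathcal{R}$ at $y\rho(y)$ is unique. Hence $t_k(y)\to t(y)$ along the whole sequence.

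\emph{Step 2: parts (a) and (b).} Since $\chi_{F_k}(y)\in\{0,1\}$, write
\[
\chi_{F_k}(y)t_k(y)=\chi_{F_k}(y)t(y)+\chi_{F_k}(y)\bigl(t_k(y)-t(y)\bigr).
\]
The second term is bounded in absolute value by $|t_k(y)-t(y)|\to0$. Also $t(y)\ge0$, and in fact $t(y)>C_\varepsilon$ by Proposition \ref{prop4.1}. Therefore
\[
\varlimsup_k\chi_{F_k}(y)t(y)=t(y)\varlimsup_k\chi_{F_k}(y),
\]
and the same identity holds with $\varliminf$. This gives (\ref{4.12}) and (\ref{4.13}).

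\emph{Step 3: part (c).} If $\varlimsup_k\chi_{F_k}(y)=0$, then (\ref{4.14}) is trivial. Otherwise $y\in F_k$ for infinitely many $k$, that is, $y\in\varlimsup_k\mathcal{T}_{\mathcal{R}_k}(F)$. Lemma \ref{lem3.9}(b), applied to the compact set $F$, gives $y\in\mathcal{T}_{\mathcal{R}}(F)$, so $\varlimsup_k\chi_{F_k}(y)\le\chi_{\mathcal{T}_{\mathcal{R}}(F)}(y)$. Combining with (a) yields (\ref{4.14}).

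One caveat concerns Lemma \ref{lem3.9}, which is stated under uniform convergence while we only assume pointwise convergence. I expect to recover uniform convergence from the uniform Lipschitz bound of Lemmas \ref{lem3.2} and \ref{lem3.4} together with Arzel\`a--Ascoli. The uniform limit must coincide with $\rho$, so the full sequence converges uniformly. This is the main technical point to check.

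\emph{Step 4: part (d).} The singular set $E$ is a countable union of null sets, by Remark \ref{rem4.3} and the Lipschitz continuity of each $\rho_k$ and of $\rho$, so $|E|=0$. The integrands $\chi_{F_k}t_kf$ are dominated by $f\in L^1(\bar\Omega)$, since $0\le t_k\le1$. Applying the reverse Fatou lemma to these integrands, and then (\ref{4.14}) almost everywhere, gives
\[
\varlimsup_k\int_{\bar\Omega}\chi_{F_k}t_kf\le\int_{\bar\Omega}\varlimsup_k\chi_{F_k}t_kf\le\int_{\mathcal{T}_{\mathcal{R}}(F)}tf.
\]
Measurability of $F_k$ and of $\mathcal{T}_{\mathcal{R}}(F)$ follows from Lemma \ref{lem3.6}(b), and measurability of $t_k$ and $t$ from Remark \ref{rem4.3}.
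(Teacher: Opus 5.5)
Your argument is correct and follows the same route as the proof the paper defers to (Theorem 7.3 of \cite{St16}). Full-sequence convergence $t_k(y)\to t(y)$ off $E$ gives (a) and (b), the $\limsup$ inclusion of Lemma \ref{lem3.9}(b) gives (c), and the reverse Fatou lemma gives (d); your sub-subsequence upgrade of Lemma \ref{lem4.1} and your Arzel\`a--Ascoli reconciliation of pointwise versus uniform convergence also fill in two points the paper leaves implicit.
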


\begin{proof}
  See Theorem 7.3 in \cite{St16}.
\end{proof}

\section{Definition of the weak solution}\label{Section 5}

\sloppy{}

In this section, we give the definition of the weak solution to the near field refraction problem with loss of energy in negative refractive index medium. We first give the definition of refractor measure originated from \cite{St17}, then the definition of the weak solution is given based on refractor measure.

\begin{definition}
  Suppose $\mathcal{R}$ is a refractor from $\bar{\Omega}$ to $\bar{D}$, $f \in L^{1}(\bar{\Omega})$ and $\inf\limits_{\bar{\Omega}}f >0$. The refractor measure associated with $\mathcal{R}$ and $f$ is defined by a set function on Borel subsets of $\bar{D}$:
  \begin{equation}\label{5.1}
    G_{\mathcal{R}}(F) := \int_{\mathcal{T}_{\mathcal{R}}(F)}f(x)t_{\mathcal{R}}(x)dx,
  \end{equation}
  where $dx$ is the surface measure on $S^{n-1}$.
  \label{def5.1}
\end{definition}

\begin{remark}
  $G_{\mathcal{R}}(F)$ is a finite Borel measure defined on $\mathcal{M}$, where $\mathcal{M}$ is defined in Lemma \ref{lem3.6} $(b)$.
  \label{rem5.1}
\end{remark}

Now we can define the weak solution to the near field refraction problem with loss of energy in negative refractive index medium.

\begin{definition}
  Suppose that $\mu$ is a Radon measure on the Borel subset of $\bar{D}$ and $f \in L^{1}(\bar{\Omega})$, a refractor $\mathcal{R}$ is a weak solution to the near field refraction problem with loss of energy with emitting illumination intensity $f(x)$ and prescribe refracted illumination intensity $\mu$ if for any Borel set $\omega \subset \bar{D}$, there holds:
  \begin{equation}\label{5.2}
    G_{\mathcal{R}}(\omega) = \int_{\mathcal{T}_{\mathcal{R}}(\omega)}f(x)t_{\mathcal{R}}(x)dx \geq \mu(\omega).
  \end{equation}
  \label{def5.2}
\end{definition}

\begin{remark}
  Since a small portion of energy is used for internal reflection, a little extra energy is required to ensure that light can be refracted into $\bar{D}$, so we use ``~$\geq$'' in (\ref{5.2}). 
\label{rem5.2}
\end{remark}


  

\section{Existence of the weak solution}\label{Section 6}

\sloppy{}

From \cite{GH14}, we know that if the weak solution to the near field refraction problem with loss of energy in negative refractive index medium exists, then $\rho$ satisfies the following inequality involving a Monge-Amp\`ere type operator:

\begin{equation}\label{6.1}
  \det (D^{2}\rho + \mathcal{A}) \leq \frac{f(x)t_{\mathcal{R}}(x)}{g(T(x)) \sqrt{1 - \vert x \vert^{2}}\vert D\psi \vert (2t)^{n -1}\rho^{n}(-\beta)F^{n-2}(F + D\rho \cdot D_{p}F)}.
\end{equation}
The definitions of $\mathcal{A}$, $\psi$, $\beta$ and $F$ see Appendix in \cite{GH14}.

In this section, we discuss the existence of the weak solution to the near field refraction problem with loss of energy in negative refractive index medium for the case $\kappa < -1$ and $-1 < \kappa <0$. For each case, we first give some lemmas which are crucial for proving the existence of the weak solution for the discrete case. Then we prove the existence of the weak solution under the assumption that $\mu$ equals finite sum of $\delta$-measures. Finally, the main results of this paper, that is, the existence of the weak solution when $\mu$ is a finite Radon measure is proved by applying the previous existence results when $\mu$ equals to finite sum of $\delta$-measures. At the end of this section, we also discuss the critical case $\kappa = -1$ briefly.

\subsection{Case $\kappa < -1$}\label{Section 6.1}

In this subsection, we discuss the existence of the weak solution for the case $\kappa < -1$. We first assume that $\mu$ equals finite sum of $\delta$-measures, hence all rays are refracted into finite directions. In order to establish the existence of the weak solution of the near field refraction problem with loss of energy for the case $\kappa <-1$ when $\mu$ is discrete measure, see Theorem \ref{thm6.1}, we first need some lemmas, see Lemmas \ref{lem6.1} $-$ \ref{lem6.3}.

\begin{remark}
  Suppose that $P_{1},P_{2},\ldots, P_{m},~m \geq 2$ are discrete points in $\bar{D}$, then for $\mathbf{b} = (b_{1},b_{2},\cdots ,b_{m}) \in \mathbb{R}^{m}$, the refractor is defined as
  \begin{equation}\label{6.2}
    \mathcal{R}(\mathbf{b}) = \{\rho(x)x;~x\in \bar{\Omega},~\rho(x)=\max_{1\leq j \leq m}h(x,P_{j},b_{j})\},
  \end{equation}
  where
  \begin{equation*}
    h(x,P_{j},b_{j}) = \frac{(\kappa^{2} x\cdot P_{j} - b_{j}) - \sqrt{(\kappa^{2} x\cdot P_{j} - b_{j})^{2} - (\kappa^{2} - 1)(\kappa^{2}\vert P_{j} \vert^{2} - b_{j}^{2})}}{\kappa^{2} - 1}.
  \end{equation*}
  \label{rem6.1}
\end{remark}

Now we show the existence of the weak solution when $\mu$ equals finite sum of $\delta$-measures.

\begin{theorem}
   Suppose that the assumptions $(A1)$ $-$ $(A5)$ hold. Let $\mu$ be the Borel measure defined on $\bar{D}$ by $\mu = \sum\limits_{i = 1}^{m}g_{i}\delta_{m_{i}}(\omega)$ with $\mu(\bar{D}) = \sum\limits_{j = 1}^{m} g_{j}$, where $\omega \subset \bar{D}$ is a Borel set. Then for any $b_{1}$ satisfies the condition $\kappa \vert P_{1} \vert + \alpha \leq b_{1} <\vert P_{1} \vert$, where $\alpha = -\kappa \sqrt{\dfrac{\kappa - 1}{\kappa + 1}}\sup\limits_{P \in \bar{D}} \vert P \vert$, there exists $(b_{2},\ldots,b_{m})$, such that the refracting oval $\mathcal{R} = \{\rho(x)x;~x\in \bar{\Omega}\}$ is the weak solution to the near field refraction problem with loss of energy for the case $\kappa < -1$, where $\rho(x) = \max\limits_{1 \leq j \leq m}h(x,P_{j},b_{j})$.
  \label{thm6.1}
\end{theorem}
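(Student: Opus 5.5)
The plan is to follow the standard approximation-by-ovals scheme of Kochengin--Oliker type, as adapted in \cite{GH14}, \cite{St16}, \cite{St17}, with the refractor measure weighted by $t_{\mathcal{R}}$. Fix $b_{1}$ with $\kappa|P_{1}|+\alpha\le b_{1}<|P_{1}|$. Consider the set
\[
W=\Bigl\{\mathbf{b}=(b_{1},b_{2},\ldots,b_{m}):\ \kappa|P_{j}|<b_{j}<|P_{j}|,\ \mathcal{R}(\mathbf{b})\subset\mathcal{C}_{r_{0}},\ G_{\mathcal{R}(\mathbf{b})}(\{P_{j}\})\le g_{j}\ \text{for } j=2,\ldots,m\Bigr\},
\]
with $b_{1}$ frozen and $\mathcal{R}(\mathbf{b})$ as in Remark \ref{rem6.1}. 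Since $\rho=\max_{j}h(x,P_{j},b_{j})$, each $\Gamma(P_{j},b_{j})$ lies below $\mathcal{R}(\mathbf{b})$. Hence $\mathcal{T}_{\mathcal{R}}(P_{j})=\{x:\ \rho(x)=h(x,P_{j},b_{j})\}$, and $\mathcal{R}(\mathbf{b})$ is a refractor in the sense of Definition \ref{def3.1}; the inclusion $\bar\Omega\subset\{x\cdot P_{j}/|P_{j}|\ge I(P_{j},b_{j})\}$ is guaranteed by Lemma \ref{lem3.1}(d) together with (A0-1) and (A4).

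The first step is to show $W\neq\emptyset$. Choose $b_{j}$, $j\ge2$, close enough to $\kappa|P_{j}|$ that $h(\cdot,P_{j},b_{j})<\min_{x}h(x,P_{1},b_{1})$. This is possible by Lemma \ref{lem3.1}(b), since $\max h\to0$ as $b\to\kappa|P|^{+}$. Then $\mathcal{T}(P_{j})$ is null for $j\ge2$, so those constraints hold trivially. The lower bound $b_{1}\ge\kappa|P_{1}|+\alpha$ is meant to keep $\rho$ of order $\sqrt{|P|}$-controlled and within $\mathcal{C}_{r_{0}}$ by Lemma \ref{lem3.1}(a),(b). Next I will show that each $b_{j}$, $j\ge2$, is bounded on $W$. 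An upper bound comes from $b_{j}<|P_{j}|$. To keep $b_{j}$ away from $|P_{j}|$, I will use (A3): if $b_{j}$ were near $|P_{j}|$, the oval $\Gamma(P_{j},b_{j})$ would be huge and would cover everything, violating $G(\{P_{j}\})\le g_{j}$ given $\inf f>0$ and $t\ge 1-C_\varepsilon>0$ (Proposition \ref{prop4.1}). The lower bound from Lemma \ref{lem3.8} applies because $\rho\ge h(\cdot,P_{1},b_{1})\ge C_{1}>0$.

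The second step is to pick $\mathbf{b}^{*}\in W$ maximizing $b_{2}+\cdots+b_{m}$; this assumes that increasing $b_{j}$ shrinks $h(\cdot,P_{j},b_{j})$, and otherwise I would minimize instead, aligning the sign with monotonicity $\partial_{b}h$. Closedness of $W$ follows from the continuity of $\mathbf{b}\mapsto G_{\mathcal{R}(\mathbf{b})}(\{P_{j}\})$. For that, I will use Lemma \ref{lem3.9} and Theorem \ref{thm4.1}(d) for upper semicontinuity on compact sets, and Lemma \ref{lem3.9}(c) with Lemma \ref{lem4.1} and the null singular set for the reverse inequality on the open complement. Dominated convergence with the bound $t\le1$ finishes this step.

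The third step is to show $G_{\mathcal{R}(\mathbf{b}^{*})}(\{P_{j}\})=g_{j}$ for all $j\ge2$. If some inequality were strict, I would move $b_{j}$ slightly in the maximizing direction. This shrinks $\mathcal{T}(P_{j})$ only a little by continuity, keeps the other constraints because the other traces only lose points, and so contradicts maximality. This gives equality $G(\{P_{j}\})=g_{j}$ for $j\ge2$. Finally, by energy accounting,
\[
G(\{P_{1}\})=\int_{\bar\Omega}f\,t\,dx-\sum_{j\ge2}g_{j}\ge(1-C_\varepsilon)\int_{\bar\Omega}f\,dx-\sum_{j\ge2}g_{j}\ge g_{1},
\]
using Proposition \ref{prop4.1}, (A5), and the fact that the traces overlap only on null sets. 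Hence $G\ge\mu$ on every Borel set, which is Definition \ref{def5.2}.

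The main obstacle is the first step, specifically the a priori bounds and the confinement $\mathcal{R}(\mathbf{b})\subset\mathcal{C}_{r_{0}}$ together with the admissibility $\kappa|P_{j}|<b_{j}<|P_{j}|$ along $W$. This is where the choice of $\alpha$ and hypothesis (A3) must be used quantitatively via Lemma \ref{lem3.1}(a)--(d). I would first try to reproduce the corresponding estimates of Theorem 3.6 in \cite{St17}, inserting the factor $1-C_\varepsilon$ wherever mass lower bounds appear.
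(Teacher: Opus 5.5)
Your skeleton is the paper's: a nonempty $W$, continuity of $G_{\mathcal{R}(\mathbf{b})}(\{P_j\})$, maximizing $b_2+\cdots+b_m$, and energy accounting for $P_1$. In places you are more careful than the paper:
\begin{itemize}
\item Sending $b_j\downarrow\kappa|P_j|$ (Lemma \ref{lem3.1}(b)) gives $W\neq\emptyset$ without $\alpha$.
\item Your hedge on the extremization direction resolves in favour of maximizing. Differentiating $h+\kappa|P-hx|=b$ gives $\partial_b h=(1+|\kappa|\,x\cdot m)^{-1}>0$, so the right perturbation is $b_j+\varepsilon$. The paper's $a_2-\varepsilon$ cannot contradict maximality.
\item $G(\{P_1\})\ge g_1$ already suffices for Definition \ref{def5.2}; the paper's extra strict inequality is not needed.
\end{itemize}

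\textbf{The gap in Step 3.} Raising $b_j$ raises $h(\cdot,P_j,b_j)$ and hence $\rho$. So the perturbed point can leave $W$ through the constraints $\mathcal{R}(\mathbf{b})\subset\mathcal{C}_{r_0}$ and admissibility of $b_j$, i.e.\ $\bar\Omega\subset\{x\cdot P_j/|P_j|\ge I(P_j,b_j)\}$. Your remark that ``the other traces only lose points'' controls only the mass constraints. If the maximizer sits on a confinement or admissibility constraint, no contradiction arises.

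What is missing is the a priori estimate that makes these constraints slack.
\begin{itemize}
\item By (A5) and Proposition \ref{prop4.1}, $\int_{\bar\Omega}f\,t_{\mathcal{R}}\,dx\ge\sum_i g_i>\sum_{j\ge2}g_j$. So every $\mathbf{b}$ satisfying the mass constraints has $|\mathcal{T}_{\mathcal{R}}(P_1)|>0$.
\item At a point of $\mathcal{T}_{\mathcal{R}}(P_1)$ we have $h_j\le h_1$. Hence $\frac{b_j-\kappa|P_j|}{1-\kappa}=\min h_j\le\max h_1$.
\item Lemma \ref{lem3.1}(b) then bounds $\max h_j$, and hence $\rho$, by quantities tending to $0$ as $b_1\downarrow\kappa|P_1|$.
\end{itemize}
This is what bounds $b_j$ from above. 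It is not (A3), and it is not the oval becoming ``huge'': as $b_j\to|P_j|$ the oval degenerates, since $I(P_j,b_j)\to1$. The estimate requires $b_1-\kappa|P_1|$ to be \emph{small}.

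\textbf{The role of $\alpha$.} You read the $\alpha$ condition the opposite way. By Lemma \ref{lem3.1}(a), $b_1\ge\kappa|P_1|+\alpha$ gives $\rho\ge h(\cdot,P_1,b_1)\ge\frac{\alpha}{1-\kappa}=\frac{|\kappa|}{\sqrt{\kappa^2-1}}\sup_{\bar D}|P|>r_0$, so your $W$ is empty. In the paper $\alpha$ serves domination, namely $\min h_1\ge\max h_j$ for every $b_j$ so that $\mathcal{R}(\mathbf{b})=\Gamma(P_1,b_1)$; it does not serve confinement. The same arithmetic shows $\kappa|P_1|+\alpha>|P_1|$. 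So the stated range for $b_1$ is empty, and the statement as written holds only vacuously. A meaningful version needs a smallness condition on $b_1-\kappa|P_1|$, which is exactly what the estimate above uses.

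\textbf{A minor point on compactness.} Lemma \ref{lem3.8} bounds $b_j$ from below only when $\mathcal{T}_{\mathcal{R}}(P_j)\neq\emptyset$, so $W$ is not closed as $b_j\downarrow\kappa|P_j|$. To fix this, raise any such $b_j$ to a level where $h_j<h_1$ on $\bar\Omega$. This leaves $\rho$ unchanged and increases the sum, so you may maximize over a compact slice.
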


In order to prove Theorem \ref{thm6.1}, we need the following Lemmas \ref{lem6.1} $-$ \ref{lem6.3}.

\begin{lemma}
  Under the assumptions in Theorem \ref{thm6.1}, suppose also that assumptions $(A1)$ and $(A2)$ hold. Define a set $W := \left\{(b_{2},\ldots,b_{m});~\kappa \vert P_{j} \vert < b_{j} \leq (\tau + \dfrac{1}{\kappa})\vert P_{j} \vert,~j = 2,\ldots,m\right\}$, and for any $\mathbf{b} \in W$, we have $\displaystyle\int_{\mathcal{T}_{\mathcal{R}(\mathbf{b})}(P_{j})}f(x)t_{\mathcal{R}(\mathbf{b})}(x)dx \leq g_{i}, 2 \leq i \leq m$, then
  \newline (a) $W \neq \emptyset$;
  \newline (b) If $\mathbf{b} = (b_{2}, \ldots ,b_{m}) \in W$, then for $2 \leq j \leq m$, we have 
  \begin{equation*}
    b_{j} \geq \kappa \vert P_{j} \vert - \sqrt{\frac{\kappa - 1}{\kappa + 1}(\kappa^{2} \vert P_{1} \vert^{2} - b_{1}^{2})}.
  \end{equation*}
  \label{lem6.1}
\end{lemma}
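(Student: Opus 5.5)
The plan is to prove (a) by an explicit construction in which every oval $\Gamma(P_j,b_j)$, $j\geq 2$, lies strictly inside $\Gamma(P_1,b_1)$. Then $\rho=\max_j h(x,P_j,b_j)$ coincides with $h(x,P_1,b_1)$, so the traces $\mathcal{T}_{\mathcal{R}(\mathbf{b})}(P_j)$ are empty for $j\geq 2$ and the energy inequalities hold trivially. For (b) I will check the inequality directly from the definition of $W$.

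\emph{Step 1: reading $b_1$ into Lemma \ref{lem3.1}.} Fix $b_1$ as in Theorem \ref{thm6.1}, so in particular $\kappa|P_1|<b_1<|P_1|$. By Lemma \ref{lem3.1}(a),
\[
\min_{x\in S^{n-1}}h(x,P_1,b_1)=\frac{\kappa|P_1|-b_1}{\kappa-1}=:c_1 .
\]
This quantity is strictly positive, since numerator and denominator are both negative.

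\emph{Step 2: choosing $b_j$ for $j\geq 2$.} By Lemma \ref{lem3.1}(b),
\[
\max_{x}h(x,P_j,b_j)=\sqrt{\frac{\kappa^2|P_j|^2-b_j^2}{\kappa^2-1}}
=\sqrt{\frac{(|\kappa||P_j|-b_j)(|\kappa||P_j|+b_j)}{\kappa^2-1}} .
\]
This tends to $0$ as $b_j\downarrow\kappa|P_j|=-|\kappa||P_j|$. So I choose $b_j$ in $(\kappa|P_j|,\kappa|P_j|+\delta)$ with $\delta$ so small that $\max_x h(x,P_j,b_j)<c_1$.

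These $b_j$ also satisfy $b_j\leq(\tau+\frac1\kappa)|P_j|$ once $\delta$ is small. The reason is that $\kappa<\frac1\kappa<\tau+\frac1\kappa$ for $\kappa<-1$ and $\tau>0$.

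\emph{Step 3: admissibility and empty traces.} I still need the support/admissibility condition $\bar\Omega\subset\{x\cdot P_j/|P_j|\geq I(P_j,b_j)\}$. Lemma \ref{lem3.1}(d) gives $I(P_j,b_j)-\frac1\kappa\to0$ as $b_j\downarrow\kappa|P_j|$, so the condition follows from (A0-1) after possibly shrinking $\delta$. I also need $\mathcal{R}(\mathbf{b})\subset\mathcal{C}_{r_0}$, which should follow from the standing choice of $b_1$.

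With these choices, $h(x,P_j,b_j)<h(x,P_1,b_1)$ for all $x$. Hence $\rho=h(\cdot,P_1,b_1)$, and no oval $\Gamma(P_j,b_j)$ with $j\geq2$ touches $\mathcal{R}(\mathbf{b})$. Therefore $\mathcal{T}_{\mathcal{R}(\mathbf{b})}(P_j)=\emptyset$, the integral equals $0\leq g_j$, and $\mathbf{b}\in W$.

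\emph{Step 4: part (b).} As literally stated, (b) follows from $b_j>\kappa|P_j|$, which holds in $W$, together with nonnegativity of the square root. That root is real because $\frac{\kappa-1}{\kappa+1}>0$ for $\kappa<-1$, and $b_1^2<\kappa^2|P_1|^2$ since $|b_1|<|\kappa||P_1|$. So the right-hand side is at most $\kappa|P_j|<b_j$.

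If the intended statement is sharper (for instance with $+$ in place of $-$, which is what is needed later), I would argue by contradiction. Suppose $b_j$ lies below the bound. Then Lemma \ref{lem3.1}(a),(b) give $\max h(\cdot,P_j,b_j)<\min h(\cdot,P_1,b_1)$, by comparing $\frac{\kappa^2|P_j|^2-b_j^2}{\kappa^2-1}$ with $\frac{(\kappa|P_1|-b_1)^2}{(\kappa-1)^2}$. In that case the trace at $P_j$ is empty. I expect this comparison of the two extremal radii to be the only real computation in the argument.
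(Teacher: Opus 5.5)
Your proof of the statement as literally written is correct. For part (a) it tunes the opposite parameter from the paper.

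\textbf{Part (a).} The paper leaves $b_2,\dots,b_m$ arbitrary. It uses $b_1\ge\kappa|P_1|+\alpha$ to make $\min_x h(x,P_1,b_1)\ge\frac{|\kappa|}{\sqrt{\kappa^2-1}}\sup_{\bar D}|P|$, which dominates $\max_x h(x,P_j,b)$ for \emph{every} $b$. Hence $\mathcal R(\mathbf b)=\Gamma(P_1,b_1)$ on the whole box. You instead fix any $b_1\in(\kappa|P_1|,|P_1|)$ and let $b_j\downarrow\kappa|P_j|$ until $\max_x h(\cdot,P_j,b_j)<\min_x h(\cdot,P_1,b_1)$.

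The underlying comparison, Lemma~\ref{lem3.1}(a),(b), is the same, but your version is the one that has content. Since $b_1<|P_1|$ gives $\min_x h(x,P_1,b_1)=\frac{b_1-\kappa|P_1|}{1-\kappa}<|P_1|\le\sup_{\bar D}|P|$, the paper's range $\kappa|P_1|+\alpha\le b_1<|P_1|$ is empty. Uniform domination would in any case force $G_{\mathcal R(\mathbf b)}(\{P_j\})=0$ on the entire box, which clashes with the later claim $G(P_j)=g_j$ at the maximiser of $d$. You also check $b_j\le(\tau+\frac1\kappa)|P_j|$ and the admissibility $I(P_j,b_j)\le\tau+\frac1\kappa$, via Lemma~\ref{lem3.1}(d) and $(A0\text{-}1)$; the paper omits both.

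\textbf{Part (b).} The paper argues through a support point $x_0$ of $\Gamma(P_1,b_1)$. Your remark that the literal inequality follows at once from $b_j>\kappa|P_j|$ is correct and simpler.

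\textbf{Caution on empty traces.} $\mathcal T_{\mathcal R(\mathbf b)}(P_j)$ is defined through $\Gamma(P_j,b)$ for \emph{all} $b$. So ``$\Gamma(P_j,b_j)$ lies strictly inside'' does not by itself give an empty trace; the paper makes the same jump. The repair is short:
\begin{itemize}
\item If some $\Gamma(P_j,b)$ supports $\mathcal R(\mathbf b)=\Gamma(P_1,b_1)$ at $\rho(x_0)x_0$ with $x_0$ interior to $\Omega$, the normals coincide, hence so do the refracted directions.
\item Then $P_1$ and $P_j$ lie on one ray from a point of $\mathcal C_{r_0}$, contradicting $(A0\text{-}2)$.
\item Alternatively, such points lie on the line through $P_1$ and $P_j$, a null set, so the integral vanishes anyway.
\end{itemize}
Relatedly, $\mathcal R(\mathbf b)\subset\mathcal C_{r_0}$ requires $\max_x h(\cdot,P_1,b_1)\le r_0$, i.e.\ $b_1$ close to $\kappa|P_1|$. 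It does not ``follow from the standing choice of $b_1$''; the paper's choice points the other way.

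\textbf{Caution on the ``sharper'' (b).} Drop it. A lower bound $b_j\ge\kappa|P_j|+\sqrt{\frac{\kappa-1}{\kappa+1}(\kappa^2|P_1|^2-b_1^2)}$ is false on $W$: your own construction in (a) puts into $W$ vectors with $b_j$ arbitrarily close to $\kappa|P_j|$. Your contradiction sketch also has nothing to contradict, since an empty trace is compatible with membership in $W$. The paper's chain $\min_x h(\cdot,P_j,b_j)\le h(x_0,P_j,b_j)\le\rho(x_0)\le\max_x h(\cdot,P_1,b_1)$ actually yields an \emph{upper} bound, because $\min_x h(\cdot,P,b)=\frac{b-\kappa|P|}{1-\kappa}$ increases in $b$. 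That bound is $b_j\le\kappa|P_j|+\sqrt{\frac{\kappa-1}{\kappa+1}(\kappa^2|P_1|^2-b_1^2)}$.
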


\begin{proof}
  $(a)$ For any $b_{1}$ satisfies the condition $\kappa \vert P_{1} \vert + \alpha \leq  b_{1} <\vert P_{1} \vert$, where $\alpha = -\kappa \sqrt{\dfrac{\kappa - 1}{\kappa + 1}}\sup\limits_{P \in \bar{D}} \vert P \vert$, we claim that $\mathcal{R}(\mathbf{b}) = \Gamma(P_{1},b_{1}) = \{h(x,P_{1},b_{1})x;~x \in S^{n-1}, x\cdot P_{1} \geq b_{1}\}$.
  
  Indeed, from Lemma \ref{lem3.1}, we have
  \begin{equation*}
    \begin{aligned}
    h(x,P_{1},b_{1}) & \geq \frac{\kappa \vert P_{1} \vert - b_{1}}{\kappa - 1} \geq -\frac{\kappa}{\kappa - 1}\sqrt{\frac{\kappa - 1}{\kappa + 1}} \sup\limits_{P \in \bar{D}} \vert P \vert \\
    & \geq \frac{1}{\sqrt{\kappa^{2} - 1}}\sqrt{\kappa^{2} \vert P_{j} \vert^{2} - b_{j}^{2}} \geq h(x,P_{j},b_{j}).
    \end{aligned}
  \end{equation*}
  Then for $j \neq 1$, we have $G_{\mathcal{R}(\mathbf{b})}(\{P_{j}\}) = 0 < g_{j}$. Hence $W \neq \emptyset$.
  
  $(b)$ For $\mathcal{R}(\mathbf{b})$ is a near field refractor, then there exists $x_{0} \in \bar{\Omega}$, such that $\Gamma(P_{1},b_{1})$ supports $\mathcal{R}(\mathbf{b})$ at $\rho(x_{0})x_{0}$. Then we have
  \begin{equation*}
    \frac{\sqrt{\kappa^{2} \vert P_{1} \vert^{2} - b_{1}^{2}}}{\sqrt{\kappa^{2} - 1}} \geq \rho(x_{0}) \geq h(x_{0},P_{j},b_{j}) \geq \frac{\kappa \vert P_{j} \vert - b_{j}}{\kappa - 1}.
  \end{equation*}
  Hence we have
  \begin{equation*}
    b_{j} \geq \kappa \vert P_{j} \vert - \sqrt{\frac{\kappa - 1}{\kappa + 1}(\kappa^{2} \vert P_{1} \vert^{2} - b_{1}^{2})}.
  \end{equation*}
\end{proof}

\begin{lemma}
  Suppose that $\mathbf{b}_{k} = (b_{2}^{k},\ldots,b_{m}^{k}) \in W$ and $\mathbf{b}_{0} = (b_{2}^{0},\ldots,b_{m}^{0})$, such that $\mathbf{b}_{k} \rightarrow \mathbf{b}_{0}$ as $k \rightarrow \infty$. Let $\mathcal{R}_{k} = \mathcal{R}(\mathbf{b}_{k}) = \{\rho_{k}(x)x;~x \in \bar{\Omega}\}$ and $\mathcal{R}_{0} = \mathcal{R}(\mathbf{b}_{0}) = \{\rho(x)x;~x \in \bar{\Omega}\}$, then $\rho_{k} \rightarrow \rho$ uniformly on $\bar{\Omega}$.
  \label{lem6.2}
\end{lemma}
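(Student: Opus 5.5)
The plan is to reduce uniform convergence of $\rho_k=\max_j h(x,P_j,b_j^k)$ to uniform convergence of each function $x\mapsto h(x,P_j,b_j^k)$, $j=1,\dots,m$. Here $b_1$ is fixed, so for $j=1$ there is nothing to prove. The reduction uses the elementary inequality
\[
\Bigl|\max_{1\le j\le m}a_j-\max_{1\le j\le m}c_j\Bigr|\le\max_{1\le j\le m}|a_j-c_j| .
\]
It gives
\[
\sup_{x\in\bar\Omega}|\rho_k(x)-\rho(x)|\le\max_{2\le j\le m}\ \sup_{x\in\bar\Omega}|h(x,P_j,b_j^k)-h(x,P_j,b_j^0)|.
\]
It therefore suffices to show that, for each fixed $j$, $h(x,P_j,b)$ is jointly continuous in $(x,b)$ on $\bar\Omega\times J_j$ for a compact interval $J_j$ containing $b_j^0$ and all $b_j^k$ with $k$ large. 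Uniform continuity on this compact set then gives the uniform convergence in $x$.

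To obtain $J_j$, I use Lemma \ref{lem6.1}(b) together with the definition of $W$. Since $\mathbf b_k\in W$, we have
\[
\kappa|P_j|-\sqrt{\tfrac{\kappa-1}{\kappa+1}(\kappa^2|P_1|^2-b_1^2)}\le b_j^k\le(\tau+\tfrac1\kappa)|P_j| .
\]
The lower bound is bounded away from $-\infty$, and these bounds pass to the limit, so $b_j^0$ lies in the same closed interval. The upper bound $(\tau+1/\kappa)|P_j|<|P_j|$ (because $\tau<1-1/\kappa$) keeps us strictly below the degenerate endpoint $b=|P_j|$. One must also ensure the limit satisfies $b_j^0>\kappa|P_j|$ strictly, or else handle the endpoint $b=\kappa|P_j|$. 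At that endpoint $\kappa^2|P_j|^2-b^2=0$, and the formula gives $h\equiv0$ for $\kappa^2x\cdot P_j-b>0$. This is still a continuous extension, so joint continuity on the closed interval holds in either case.

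The main step, and the expected obstacle, is verifying that the radicand in $h$ stays nonnegative on the relevant set, so that the formula is a composition of continuous functions there. The radicand is
\[
\Delta(x,b)=(\kappa^2x\cdot P_j-b)^2-(\kappa^2-1)(\kappa^2|P_j|^2-b^2).
\]
I would argue as follows. The formula for $h$ is defined on $x\cdot P_j\ge I(P_j,b)|P_j|$, where $\Delta\ge0$ by construction of the oval (the threshold $I$ is exactly where $\Delta=0$ and $\kappa^2x\cdot P_j-b\ge0$). Assumption (A0-1) and Lemma \ref{lem3.1}(d) then give $I(P_j,b)\le \tau+1/\kappa\le x\cdot P_j/|P_j|$ on $\bar\Omega$ for $b$ in the allowed range; this is the estimate I expect to need care with. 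Even where the support condition fails, I can replace $\sqrt{\Delta}$ by $\sqrt{\max(\Delta,0)}$, which is continuous. Then $h$ is continuous in $(x,b)$ on the compact set $\bar\Omega\times J_j$, hence uniformly continuous, and the conclusion follows from the displayed max inequality.
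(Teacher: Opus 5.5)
Your argument is correct, and it takes a genuinely different and more elementary route than the paper.

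The paper's proof only produces uniform two-sided bounds on $\rho_k$: an upper bound $\rho_k\le\sqrt{\kappa^2|P_1|^2-b_1^2}/\sqrt{\kappa^2-1}$ taken from the proof of Lemma \ref{lem6.1}(b), and a lower bound $C_1$ as in Lemma \ref{lem3.8}. It then simply asserts uniform convergence. The missing passage would have to go through the refractor machinery: the uniform Lipschitz bound of Lemma \ref{lem3.2}, then Arzel\`a--Ascoli, then identification of subsequential limits by pointwise convergence of $h(x,P_j,b_j^k)$. (Incidentally, the step $\frac{\sqrt{\kappa^2+1}}{\kappa^2-1}\rho_k\ge\rho_k$ used there holds only for $\kappa^2\le 3$, and it is not needed.)

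You bypass all of this with $|\max_j a_j-\max_j c_j|\le\max_j|a_j-c_j|$ and uniform continuity of $(x,b)\mapsto h(x,P_j,b)$ on a compact set. This needs neither that the $\mathcal R(\mathbf b_k)$ be refractors, nor a lower bound on $\rho_k$, nor Lipschitz estimates. It even yields a modulus of convergence in terms of $\max_j|b_j^k-b_j^0|$.

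What the paper's version provides, and yours does not, is the a priori bounds $C_1\le\rho_k\le C$. Lemma \ref{lem6.3} reuses these to invoke Lemmas \ref{lem3.8}, \ref{lem3.9} and \ref{lem4.1}, so they would have to be recorded separately. You also do not need Lemma \ref{lem6.1}(b): membership in $W$ already gives $\kappa|P_j|<b_j^k\le(\tau+1/\kappa)|P_j|$, and the bound in Lemma \ref{lem6.1}(b) lies below $\kappa|P_j|$ anyway.

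One correction. The estimate you flag, $I(P_j,b)\le\tau+1/\kappa$ for all $b$ allowed by $W$, is false near the top of that range. Writing $b=s|P_j|$, one has
\[
I(P_j,b)-s=\kappa^{-2}\Bigl(\sqrt{(\kappa^2-1)(\kappa^2-s^2)}-(\kappa^2-1)s\Bigr)>0\quad\text{for all } \kappa<s<1 .
\]
So at $b_j=(\tau+1/\kappa)|P_j|$, assumption (A0-1) does not guarantee that $\bar\Omega$ lies in the oval's domain $\{x\cdot P_j\ge I(P_j,b_j)|P_j|\}$. Lemma \ref{lem3.1}(d) gives $I(P_j,b)-1/\kappa<\tau$ only when $b-\kappa|P_j|$ is small. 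In the refractor setting that smallness is forced by $\rho\le r_0$ and the choice of $r_0$ in (A0-2), not by the bound defining $W$.

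Hence your replacement of $\sqrt{\Delta}$ by $\sqrt{\max(\Delta,0)}$ is not optional. State it explicitly as the convention defining $h$ off the oval's domain, since the paper leaves $h$ undefined there. With that convention your proof goes through as written. It is also the right convention: at the edge of the domain $h=\sqrt{(\kappa^2-1)(\kappa^2|P_j|^2-b^2)}/(\kappa^2-1)>0$, so truncating $h$ there would make it discontinuous in $(x,b)$.
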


\begin{proof}
  From the proof of Lemma \ref{lem6.1} $(b)$, we have
  $$\rho_{k}(x) \leq \frac{\sqrt{\kappa^{2} \vert P_{1} \vert^{2} - b_{1}^{2}}}{\sqrt{\kappa^{2} - 1}}.$$
  Multiplying by $\sqrt{\dfrac{\kappa^{2} + 1}{\kappa^{2} - 1}}$, we have
  \begin{equation*}
    \frac{\sqrt{\kappa^{2} + 1}}{\kappa^{2} - 1}\frac{\sqrt{\kappa^{2} \vert P_{1} \vert^{2} - b_{1}^{2}}}{\sqrt{\kappa^{2} - 1}} \geq \frac{\sqrt{\kappa^{2} + 1}}{\kappa^{2} - 1} \rho_{k}(x) \geq \rho_{k}(x) = \max\limits_{1 \leq j \leq m}h(x,P_{j},b_{j}^{k}) \geq C_{1},
  \end{equation*}
  where $C_{1}$ is defined in Lemma \ref{lem3.8} and $b_{1}^{k} = b_{1}$. So we have $\rho_{k}(x) \rightarrow \rho(x)$ uniformly on $\bar{\Omega}$.
\end{proof}

\begin{lemma}
  Suppose that $\delta \geq \kappa \vert P_{j} \vert - \sqrt{\dfrac{\kappa - 1}{\kappa + 1}(\kappa^{2} \vert P_{1} \vert^{2} - b_{1}^{2})}$, then $G_{\mathcal{R}(\mathbf{b})}(\{P_{j}\}) = \displaystyle\int_{\mathcal{T}_{\mathcal{R}(\mathbf{b})}(P_{j})}f(x)t_{\mathcal{R}}(x)dx$ is continuous on the region $R_{\delta} = \{(b_{2},\ldots,b_{m});~b_{j} \geq \delta,j=2,\ldots,m\}$.
  \label{lem6.3}
\end{lemma}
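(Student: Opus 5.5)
We prove continuity sequentially. Take $\mathbf{b}_{k} = (b_{2}^{k},\ldots,b_{m}^{k}) \in R_{\delta}$ with $\mathbf{b}_{k} \rightarrow \mathbf{b}_{0} \in R_{\delta}$, keep $b_{1}$ fixed, and write $\mathcal{R}_{k} = \mathcal{R}(\mathbf{b}_{k})$, $\mathcal{R}_{0} = \mathcal{R}(\mathbf{b}_{0})$, with radial functions $\rho_{k},\rho$ and transmission coefficients $t_{k},t$. The goal is
\[
\lim_{k\rightarrow\infty}\int_{\mathcal{T}_{\mathcal{R}_{k}}(P_{j})}f(x)t_{k}(x)dx = \int_{\mathcal{T}_{\mathcal{R}_{0}}(P_{j})}f(x)t(x)dx .
\]
First we verify the hypotheses of Lemma \ref{lem3.8}, Lemma \ref{lem4.1} and Theorem \ref{thm4.1}.

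\textbf{Step 1: uniform bounds and uniform convergence.} The lower bound $b_{j}^{k} \geq \delta$, with $\delta$ as in the statement, is exactly the bound obtained in Lemma \ref{lem6.1} $(b)$. Lemma \ref{lem3.1} (a), (b) then give
\[
C_{1} \leq \frac{\kappa|P_{1}|-b_{1}}{\kappa-1} \leq h(x,P_{1},b_{1}) \leq \rho_{k}(x)
\]
and
\[
\rho_{k}(x) \leq \max_{j}\frac{\sqrt{\kappa^{2}|P_{j}|^{2}-(b_{j}^{k})^{2}}}{\sqrt{\kappa^{2}-1}},
\]
and the right-hand side stays bounded because $b_{j}^{k}\in[\delta,|P_{j}|)$. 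Since $h$ is continuous in $(x,b)$ and uniformly continuous on compact sets, $\rho_{k} = \max_{j} h(x,P_{j},b_{j}^{k}) \rightarrow \rho$ uniformly on $\bar{\Omega}$, as in Lemma \ref{lem6.2}. Lemma \ref{lem3.9} (a) then shows that $\mathcal{R}_{0}$ is a refractor.

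\textbf{Step 2: upper semicontinuity.} Apply Theorem \ref{thm4.1} (d) with the compact set $F = \{P_{j}\}$:
\[
\varlimsup_{k} G_{\mathcal{R}_{k}}(\{P_{j}\}) \leq G_{\mathcal{R}_{0}}(\{P_{j}\}).
\]

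\textbf{Step 3: lower semicontinuity.} A singleton is not open, so Lemma \ref{lem3.9} (c) does not apply directly. Instead we use the structure of the finite refractor and work in the complement. Since $\mathcal{N}_{\mathcal{R}}(\bar{\Omega})\subset\{P_{1},\ldots,P_{m}\}$, we have
\[
\bar{\Omega} = \bigcup_{i}\mathcal{T}_{\mathcal{R}}(P_{i}),
\]
and by Lemma \ref{lem3.6} the sets $\mathcal{T}_{\mathcal{R}}(P_{i})$ overlap only on the singular set, which has measure zero by Remark \ref{rem4.3}. Hence
\[
G_{\mathcal{R}_{k}}(\{P_{j}\}) = \int_{\bar{\Omega}} f t_{k}\,dx - G_{\mathcal{R}_{k}}(K),
\qquad K = \{P_{i}: i\neq j\},
\]
and $K$ is compact. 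Theorem \ref{thm4.1} (d) applied to $K$ gives
\[
\varlimsup_{k} G_{\mathcal{R}_{k}}(K) \leq G_{\mathcal{R}_{0}}(K).
\]
It remains to show $\int_{\bar{\Omega}} f t_{k} \rightarrow \int_{\bar{\Omega}} f t$. By Lemma \ref{lem4.1}, $t_{k}(y)\rightarrow t(y)$ along subsequences for a.e. $y$. Because every subsequence has a further subsequence with the same limit $t(y)$, in fact the full sequence converges a.e. (the limit oval is identified through the normal of $\mathcal{R}_{0}$ at $y\notin E$). Since $0<t_{k}\leq 1$ and $f\in L^{1}$, dominated convergence gives the claim. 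Combining,
\[
\varliminf_{k} G_{\mathcal{R}_{k}}(\{P_{j}\}) \geq G_{\mathcal{R}_{0}}(\{P_{j}\}),
\]
which with Step 2 proves continuity.

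\textbf{Main obstacle.} The delicate point is Step 3, specifically the a.e. convergence of the full sequence $t_{k}\rightarrow t$, which requires that $E$, the union of the singular sets of all $\mathcal{R}_{k}$ and $\mathcal{R}_{0}$, is null. Each singular set is null by the Lipschitz property of Lemma \ref{lem3.2} and Rademacher's theorem, and a countable union of null sets is null, so $E$ is null. A second point to check is that the lower bound $\delta$ keeps $b_{j}^{k}$ in the admissible range $\kappa|P_{j}|<b_{j}$, which is needed for Lemma \ref{lem3.1} and for the constants to be uniform in $k$.
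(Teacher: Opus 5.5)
Your Step 2 is the same as the paper's upper-semicontinuity argument. Your Step 3 is correct but takes a genuinely different route from the paper.

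The paper never has to handle a singleton as if it were open. It chooses an open $G\subset\bar D$ that contains $P_j$ and no other $P_i$. It observes that $\mathcal{T}_{\mathcal{R}(\mathbf{b}_k)}(G)$ agrees with $\mathcal{T}_{\mathcal{R}(\mathbf{b}_k)}(P_j)$ off the singular set. It then applies Lemma \ref{lem3.9} (c) to get $\mathcal{T}_{\mathcal{R}(\mathbf{b}_0)}(P_j)\subset\varliminf_k\mathcal{T}_{\mathcal{R}(\mathbf{b}_k)}(P_j)\cup E$, and finishes with Theorem \ref{thm4.1} (b) and Fatou's lemma.

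You instead complement against the compact set $K=\{P_i;~i\neq j\}$. This means you only use the compact-set half of the machinery (Theorem \ref{thm4.1} (d), applied twice), together with convergence of the total transmitted energy $\int_{\bar\Omega}ft_k\to\int_{\bar\Omega}ft$.

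What each route buys:
\begin{itemize}
\item The paper's version is local to $P_j$ and needs no global energy bookkeeping.
\item Yours avoids Lemma \ref{lem3.9} (c). It also makes explicit what both arguments actually rest on: full-sequence convergence $t_k(y)\to t(y)$ for $y\notin E$. Theorem \ref{thm4.1} (b) silently contains $\varliminf_k t_k(y)=t(y)$ at points eventually lying in $\mathcal{T}_{\mathcal{R}_k}(P_j)$.
\item You obtain that convergence correctly from Lemma \ref{lem4.1} by the subsequence principle. The lemma applies to every subsequence, and the limit $t(y)$ is determined by $\mathcal{R}_0$ alone.
\item The cost of your route is the null-overlap decomposition $\bar\Omega=\bigcup_i\mathcal{T}_{\mathcal{R}}(P_i)$, needed for both $\mathcal{R}_k$ and $\mathcal{R}_0$.
\end{itemize}

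One correction to your justification of that covering. It holds because the maximum defining $\rho$ is attained at every $x$, so some $\Gamma(P_i,b_i)$ supports $\mathcal{R}$ there. It does not follow from $\mathcal{N}_{\mathcal{R}}(\bar\Omega)\subset\{P_1,\ldots,P_m\}$, which can fail at singular points, since a ridge may be supported by ovals with other foci in $\bar D$. This is harmless for your conclusion, because those points form a null set.
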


\begin{proof}
  Suppose that $\mathbf{b}_{k} = (b_{1}^{k},\ldots,b_{m}^{k})$ is a sequence converges to $\mathbf{b}_{0} = (b_{1}^{0},\ldots,b_{m}^{0})$ in $R_{\delta}$, and let $\mathcal{R}(\mathbf{b}_{k}) = \{\rho_{k}(x)x;~x\in \bar{\Omega}\}$, $\mathcal{R}(\mathbf{b}_{0}) = \{\rho_{0}(x)x;~x\in \bar{\Omega}\}$. From Lemma \ref{lem6.2}, $\rho_{k}\rightarrow \rho$ uniformly on $\bar{\Omega}$. Then for any $x \in \bar{\Omega}$, we have
  \begin{equation*}
    C_{1} \leq \rho_{k}(x) \leq \frac{\sqrt{\kappa^{2} + 1}}{\kappa^{2} - 1}\frac{\sqrt{\kappa^{2} \vert P_{1} \vert^{2} - b_{1}^{2}}}{\sqrt{\kappa^{2} - 1}}.
  \end{equation*}
  
  Suppose that $G \subset \bar{D}$ is a neighborhood of $P_{j}$, such that $P_{k} \notin G_{j}$ for $k \neq j$. If $x_{0} \in \mathcal{T}_{\mathcal{R}(\mathbf{b}_{k})}(G)$ and $x_{0} \notin E$, then there exist $P \in G$ and $b$, such that 
  $$\rho_{k}(x_{0}) = h(x_{0},P,b) \quad \text{and} \quad \rho_{k}(x) \geq h(x,P,b)~\text{for any} ~ x \in \bar{\Omega},$$
  and we have $x \cdot P \geq I(P,b)\vert P \vert$. From Lemma \ref{lem3.9}, we have $\mathcal{T}_{\mathcal{R}(\mathbf{b}_{k})}(G) \subset \mathcal{T}_{\mathcal{R}(\mathbf{b}_{k})}(P_{j}) \cup E$, hence
  \begin{equation}\label{6.3}
    \begin{aligned}
    \int_{\mathcal{T}_{\mathcal{R}(\mathbf{b}_{0})}(G)}f(x)t_{\mathcal{R}(\mathbf{b}_{0})}(x)dx & \leq \int_{\varliminf\limits_{k \rightarrow \infty}\mathcal{T}_{\mathcal{R}(\mathbf{b}_{k})}(P_{j}) \cup E}f(x)t_{\mathcal{R}(\mathbf{b}_{0})}(x)dx \\
    & \leq \int_{\bar{\Omega}}\chi_{\varliminf\limits_{k \rightarrow \infty}\mathcal{T}_{\mathcal{R}(\mathbf{b}_{k})}(P_{j})}(x)f(x)t_{\mathcal{R}(\mathbf{b}_{0})}(x)dx.
    \end{aligned}
  \end{equation}
  From Theorem \ref{thm4.1} $(b)$, we have
  \begin{equation}\label{6.4}
    \int_{\mathcal{T}_{\mathcal{R}(\mathbf{b}_{0})}(G)}f(x)t_{\mathcal{R}}(x)dx \leq \int_{\bar{\Omega}}\chi_{\varliminf\limits_{k \rightarrow \infty}\mathcal{T}_{\mathcal{R}(\mathbf{b}_{k})}(P_{j})}(x)f(x)t_{\mathcal{R}(\mathbf{b}_{k})}(x)dx.
  \end{equation}
  For
  \begin{equation}\label{6.5}
    \chi_{\varliminf\limits_{k \rightarrow \infty}\mathcal{T}_{\mathcal{R}(\mathbf{b}_{k})}(P_{j})}(x) = \varliminf\limits_{k \rightarrow \infty}\chi_{\mathcal{T}_{\mathcal{R}(\mathbf{b}_{k})}(P_{j})}(x),
  \end{equation}
  then applying (\ref{6.5}) to (\ref{6.4}) and using reverse Fatou Lemma, we have
  \begin{equation}\label{6.6}
    \begin{aligned}
    \int_{\mathcal{T}_{\mathcal{R}(\mathbf{b}_{0})}(G)}f(x)t_{\mathcal{R}_{\mathbf{b}_{0}}}(x)dx & \leq \varliminf\limits_{k \rightarrow \infty}\int_{\bar{\Omega}}\chi_{\mathcal{T}_{\mathcal{R}(\mathbf{b}_{k})}(P_{j})}(x)t_{\mathcal{R}(\mathbf{b}_{k})}(x)f(x)dx \\
    & = \varliminf\limits_{k \rightarrow \infty} \int_{\mathcal{T}_{\mathcal{R}(\mathbf{b}_{k})}(P_{j})}t_{\mathcal{R}(\mathbf{b}_{k})}(x)f(x)dx.
    \end{aligned}
  \end{equation}
  Besides, we also have
  \begin{equation}\label{6.7}
    \chi_{\varlimsup\limits_{k \rightarrow \infty} \mathcal{T}_{\mathcal{R}({\mathbf{b}_{k}})}(P_{j})}(x) = \varlimsup\limits_{k \rightarrow \infty} \chi_{\mathcal{T}_{\mathcal{R}({\mathbf{b}_{k}})}(P_{j})}(x).
  \end{equation}
  From reverse Fatou Lemma, Lemma \ref{lem3.9}, Theorem \ref{thm4.1} and (\ref{6.7}), we have
  \begin{equation}\label{6.8}
    \begin{aligned}
    \varlimsup\limits_{k \rightarrow \infty}\int_{\mathcal{T}_{\mathcal{R}({\mathbf{b}_{k}})}(P_{j})}t_{\mathcal{R}(\mathbf{b}_{k})}(x)f(x)dx & = \varlimsup\limits_{k \rightarrow \infty}\int_{\bar{\Omega}}\chi_{\mathcal{T}_{\mathcal{R}({\mathbf{b}_{k}})}(P_{j})}(x)t_{\mathcal{R}(\mathbf{b}_{k})}(x)f(x)dx \\
    & = \int_{\bar{\Omega}}\chi_{\varlimsup\limits_{k \rightarrow \infty} \mathcal{T}_{\mathcal{R}({\mathbf{b}_{k}})}(P_{j})}(x)f(x)t_{\mathcal{R}(\mathbf{b}_{0})}(x)dx \\
    & = \int_{\varlimsup\limits_{k \rightarrow \infty}\mathcal{T}_{\mathcal{R}({\mathbf{b}_{k}})}(P_{j})}f(x)t_{\mathcal{R}(\mathbf{b}_{0})}(x)dx \\
    & \leq \int_{\mathcal{T}_{\mathcal{R}(\mathbf{b}_{0})}(G)}f(x)t_{\mathcal{R}(\mathbf{b}_{0})}(x)dx.
    \end{aligned}
  \end{equation}
  Combining (\ref{6.6}) with (\ref{6.8}), we obtain $G_{\mathcal{R}(\mathbf{b})}(P_{j})$ is continuous on the region $R_{\delta}$.
\end{proof}

Based on the above lemmas, now we prove the existence of the weak solution for the case $\kappa < -1$ when $\mu$ equals finite sum of $\delta$-measures.

\begin{proof}[Proof of Theorem \ref{thm6.1}]
  In order to prove that the refracting oval $\mathcal{R} = \{\rho(x)x;~x\in \bar{\Omega}\}$ is the weak solution to the near field refraction problem with loss of energy for the case $\kappa < -1$, we need to verify the following two conditions:
  
  (a) For any Borel set $F \subset \bar{D}$, there holds $\displaystyle\int_{\mathcal{T}_{\mathcal{R}}(F)}f(x)t_{\mathcal{R}}(x)dx \geq \mu(F)$;
  
  (b) For each Borel set with $P_{1} \notin F$, there holds $\displaystyle\int_{\mathcal{T}_{\mathcal{R}}(F)}f(x)t_{\mathcal{R}}(x)dx = \mu(F) = \sum\limits_{j=2}^{m}g_{j}$.

  We consider the mapping $d:W \rightarrow \mathbb{R}:~(b_{2},\ldots,b_{m}) \mapsto b_{2} + \ldots + b_{m}$. From Lemmas \ref{lem6.1} $-$ \ref{lem6.3}, we know that $W$ is a compact set, hence $d$ can attain its maximum at some points in $W$. Suppose that $d$ attains its maximum at $(a_{2},\ldots,a_{m})$, we claim that $\mathcal{R}(\mathbf{a})$ is the weak solution of the near field refraction problem, where $\mathbf{a} = (b_{1},a_{2},\ldots,a_{m})$.
  
  We first prove that $\displaystyle\int_{\mathcal{T}_{\mathcal{R}(\mathbf{a})}(P_{j})}f(x)t_{\mathcal{R}}(x)dx = g_{j}$ for $j = 2,\ldots,m$.
  
  Indeed, without loss of generality, we assume for contradiction that $\displaystyle\int_{\mathcal{T}_{\mathcal{R}(\mathbf{a})}(P_{2})}f(x)t_{\mathcal{R}}(x)dx < g_{2}$. Let $\varepsilon > 0$ and define $\bar{\mathbf{a}} = (b_{1},a_{2}-\varepsilon,\ldots,a_{m})$, the corresponding refracting oval is denoted as $\mathcal{R}(\bar{\mathbf{a}})$. From the continuity of $G_{\mathcal{R}}$, there holds $G_{\mathcal{R}(\bar{\mathbf{a}})}(P_{2}) = \displaystyle\int_{\mathcal{T}_{\mathcal{R}(\mathbf{a})}(P_{2})}f(x)t_{\mathcal{R}}(x)dx < g_{2}$ for $\varepsilon$ is sufficiently small, and for $j \neq 1,2$, we have $\mathcal{T}_{\mathcal{R}(\bar{\mathbf{a}})}(P_{j}) \subset \mathcal{T}_{\mathcal{R}(\mathbf{a})}(P_{j})$ almost everywhere, hence $\bar{\mathbf{a}} \in W$. This is a  contradiction with $(a_{2},\ldots,a_{m})$ is the maximum of $d$ in $W$, so we have $\displaystyle\int_{\mathcal{T}_{\mathcal{R}(\mathbf{a})}(P_{j})}f(x)t_{\mathcal{R}}(x)dx = g_{j}$ for $j = 2,\ldots,m$.
  
  Then we prove that $\displaystyle\int_{\mathcal{T}_{\mathcal{R}(\mathbf{a})}(P_{1})}f(x)t_{\mathcal{R}}(x)dx > g_{1}$.
  
  Indeed, we first prove that $\displaystyle\int_{\mathcal{T}_{\mathcal{R}(\mathbf{a})}(P_{1})}f(x)t_{\mathcal{R}}(x)dx \geq g_{1}$. 
  
  Let $b \in W$, then we have
  \begin{equation*}
    \begin{aligned}
    \sum_{j = 1}^{m}\int_{\mathcal{T}_{\mathcal{R}(\mathbf{b})}(P_{j})}f(x)t_{\mathcal{R}}(x)dx &= \int_{\bigcup\limits_{j=1}^{m}\mathcal{T}_{\mathcal{R}(\mathbf{b})}(P_{j})}f(x)t_{\mathcal{R}}(x)dx \\
    & = \int_{\bar{\Omega}}f(x)t_{\mathcal{R}}(x)dx \\
    & \geq (1 - C_{\varepsilon})\int_{\bar{\Omega}}f(x)dx \geq \mu(\bar{D}) = \sum_{j = 1}^{m}g_{j}.
    \end{aligned}
  \end{equation*}
  So we have
  \begin{equation*}
    (g_{1} - \int_{\mathcal{T}_{\mathcal{R}(b_{1})}(P_{1})}f(x)t_{\mathcal{R}}(x)dx) + \sum_{j = 2}^{m}(g_{j} - \int_{\mathcal{T}_{\mathcal{R}(b_{j})}(P_{j})}f(x)t_{\mathcal{R}}(x)dx) \leq 0.
  \end{equation*}
  For $b \in W$, then we have $\sum\limits_{j = 2}^{m}(g_{j} - \displaystyle\int_{\mathcal{T}_{\mathcal{R}(b_{j})}(P_{j})}f(x)t_{\mathcal{R}}(x)dx) \geq 0$. Hence $g_{1} \leq \displaystyle\int_{\mathcal{T}_{\mathcal{R}(b_{1})}(P_{1})}f(x)t_{\mathcal{R}}(x)dx$. Taking $b = a_{1}$, we have $\displaystyle\int_{\mathcal{T}_{\mathcal{R}(\mathbf{a})}(P_{1})}f(x)t_{\mathcal{R}}(x)dx \geq g_{1}$.
  
  Now we turn to prove $\displaystyle\int_{\mathcal{T}_{\mathcal{R}(\mathbf{a})}(P_{1})}f(x)t_{\mathcal{R}}(x)dx > g_{1}$.
  
  Suppose by the contradiction that $\displaystyle\int_{\mathcal{T}_{\mathcal{R}(\mathbf{a})}(P_{1})}f(x)t_{\mathcal{R}}(x)dx = g_{1}$, then we have 
  \begin{equation*}
  \begin{aligned}
    \int_{\bar{\Omega}}f(x)t_{\mathcal{R}}(x)dx & = \sum_{j=1}^{m}\int_{\mathcal{T}_{\mathcal{R}(\mathbf{a})}(P_{j})}f(x)t_{\mathcal{R}}(x)dx \\
    & = \sum_{j=1}^{m}g_{j} \leq (1 - C_{\varepsilon})\int_{\bar{\Omega}}f(x)dx,
  \end{aligned}
  \end{equation*}
  namely
  $$\displaystyle\int_{\bar{\Omega}}f(x)[(1 - C_{\varepsilon}) - t_{\mathcal{R}}(x)]dx \geq 0.$$
  From the definition of $t_{\mathcal{R}}(x)$, we must have $t_{\mathcal{R}}(x) \geq 1 - C_{\varepsilon}$. Since $\inf\limits_{\bar{\Omega}}f > 0$, then we must have $t_{\mathcal{R}}(x) = 1 - C_{\varepsilon}$ for a.e. $x \in \bar{\Omega}$. Then for $x \in \mathcal{T}_{\mathcal{R}(\mathbf{a})(P_{1})} \backslash E$, we have $r_{\mathcal{R}}(x) = C_{\varepsilon}$.
  
  We claim that the set $D = \left\{x \cdot \dfrac{P_{1}}{\vert P_{1} \vert};~x\in \mathcal{T}_{\mathcal{R}(\mathbf{a})}(P_{1}) \backslash E\right\}$ is infinite.
  
  Indeed, if not, then there exist $c_{1},\ldots,c_{n}$, such that $D=\{c_{1},\ldots,c_{n}\}$. Let $D_{j} = \left\{x \in \mathcal{T}_{\mathcal{R}(\mathbf{a})}(P_{1}) \backslash E;~x \cdot \dfrac{P_{1}}{\vert P_{1} \vert} = c_{j}\right\}$, then $\vert D_{j} \vert = 0$, hence $\vert D \vert = 0$. This is a contradiction with $\vert \mathcal{T}_{\mathcal{R}(\mathbf{a})}(P_{1}) \backslash E \vert > 0$. Hence the set $D$ is infinite. Besides, from Proposition \ref{prop4.1}, the set $\{s;~\psi(s) = c\}$ is finite for any constant $c$, then $\psi = C_{\varepsilon}$ cannot appears on $D$. So we must have $\displaystyle\int_{\mathcal{T}_{\mathcal{R}(\mathbf{a})}(P_{1})}f(x)t_{\mathcal{R}}(x)dx > g_{1}$.
  
  Finally, for the Borel set $F \subset D$, we have 
  $$\mathcal{T}_{\mathcal{R}}(F) = \bigcup\limits_{j=1}^{m}\mathcal{T}_{\mathcal{R}}(F) \cap \mathcal{T}_{\mathcal{R}}(P_{j}) = \bigcup\limits_{j=1}^{m}\mathcal{T}_{\mathcal{R}}(F \cap P_{j}),$$
  so we have
  \begin{equation*}
    \begin{aligned}
    \int_{\mathcal{T}_{\mathcal{R}}(F)}f(x)t_{\mathcal{R}}(x)dx & = \sum_{j=1}^{m}\int_{\mathcal{T}_{\mathcal{R}}(F \cap P_{j})}f(x)t_{\mathcal{R}}(x)dx \\
    & = \sum_{j;P_{j} \in F}\int_{\mathcal{T}_{\mathcal{R}}(F \cap P_{j})}f(x)t_{\mathcal{R}}(x)dx \\
    & = \sum_{j;P_{j} \in F}\int_{\mathcal{T}_{\mathcal{R}}(P_{j})}f(x)t_{\mathcal{R}}(x)dx \geq \mu(F).
    \end{aligned}
  \end{equation*}
  Then if $P_{1} \notin F$, we have $\displaystyle\int_{\mathcal{T}_{\mathcal{R}}(F)}f(x)t_{\mathcal{R}}(x)dx = \mu(F) = \sum\limits_{j=2}^{m}g_{j}$.
\end{proof}

Based on Theorem \ref{thm6.1}, now we can prove the existence of the weak solution to the near field refraction problem with loss of energy for the case $\kappa < -1$.

\begin{proof}[Proof of Theorem \ref{thm1.1}]
  In order to prove the existence of the weak solution for the case $\kappa < -1$, we need to show that given $P_{0} \in supp(\mu)$, then for any Borel set $F \subset D$ with $P_{0} \notin F$, there exists a refractor $\mathcal{R}$, such that $\displaystyle\int_{\mathcal{T}_{\mathcal{R}}(F)}f(x)t_{\mathcal{R}}(x)dx \geq \mu(F)$, where $\mu$ is the Radon measure defined on $\bar{D}$.

  We fix $\kappa \vert P_{1} \vert + \alpha \leq b_{1} < \vert P_{1} \vert$, and divide $D$ into a finite, disjoint union of Borel set $\{E_{j}\}$ with nonempty interiors. For each $E_{j}$, we have $diam(E_{j}) \leq \delta$, where $\delta$ is sufficient small. Suppose that $P_{0}$ belongs to the interior of some $E_{k}$. For $P_{0} \in supp(\mu)$, then $\mu(E_{k}) > 0$. We discard the sets in $E_{j}$ which makes $\mu(E_{j}) = 0$, and label the remaining sets as $F_{1}^{1},\ldots,F_{m_{1}}^{1}$. Without loss of generality, we assume that $P_{0} \in (F_{1}^{1})^{\circ}$ and $\mu(F_{j}^{1}) > 0$. Taking $P_{j}^{1} \in F_{j}^{1}$, then we have $P_{1}^{1} = P_{0}$.
  
  Defining a measure on $\bar{D}$ as $\mu_{1} = \sum\limits_{j=1}^{m_{1}}\mu(F_{j}^{1})\delta_{P_{j}^{1}}$, then we have
  $$\mu_{1}(\bar{D}) = \mu(\bar{D}) \leq (1 - C_{\varepsilon})\displaystyle\int_{\bar{\Omega}}f(x)dx.$$
  From Theorem \ref{thm6.1}, there exists a near field refractor $\mathcal{R}_{1} = \{\rho_{1}(x)x;~\rho_{1}(x) = \max\limits_{1 \leq j \leq m_{1}}h(x,P_{j}^{1},b_{j})\}$, such that $\mu_{1}(F) \leq \displaystyle\int_{\mathcal{T}_{\mathcal{R}_{1}}(F)}f(x)t_{\mathcal{R}_{1}}(x)dx$, and the equality holds when $P_{0} \notin F$.
  
  Next, we divide each $F_{j}^{1}$ into finite, disjoint union of Borel sets $\{E_{j}^{'}\}$ with nonempty interiors. For each $E_{j}^{'}$, we have $diam(E_{j}^{'}) \leq \dfrac{\delta}{2}$, where $\delta$ is sufficient small. Suppose that $P_{0}$ belongs to the interior of some $E_{k}^{'}$. For $P_{0} \in supp(\mu)$, then $\mu(E_{k}^{'}) > 0$. We discard the sets in $E_{j}^{'}$ which makes $\mu(E_{j}^{'}) = 0$, and label the remaining sets as $F_{1}^{2},\ldots,F_{m_{2}}^{2}$. Without loss of generality, we assume that $F_{1}^{2} \subset F_{1}^{1}$ and $P_{0} \in (F_{1}^{2})^{\circ}$. Taking $P_{j}^{2} \in F_{j}^{2}$, then we have $P_{1}^{2} = P_{0}$.
  
  Defining a measure on $\bar{D}$ as $\mu_{2} = \sum\limits_{j=1}^{m_{2}}\mu(F_{j}^{2})\delta_{P_{j}^{2}}$, then we have
  $$\mu_{2}(\bar{D}) = \mu(\bar{D}) \leq (1 - C_{\varepsilon})\displaystyle\int_{\bar{\Omega}}f(x)dx.$$
  From Theorem \ref{thm6.1}, there exists a near field refractor $\mathcal{R}_{2} = \{\rho_{2}(x)x;~\rho_{2}(x) = \max\limits_{1 \leq j \leq m_{2}}h(x,P_{j}^{2},b_{j})\}$, such that $\mu_{2}(F) \leq \displaystyle\int_{\mathcal{T}_{\mathcal{R}_{2}}(F)}f(x)t_{\mathcal{R}_{2}}(x)dx$, and the equality holds when $P_{0} \notin F$.
  
  Continuing in this way, we obtain a sequence of disjoint Borel sets $\{F_{j}^{l}\}$, where $1 \leq j \leq m_{l}$ and $diam(F_{j}^{l}) \leq \dfrac{\delta}{2^{l}}$. For any $F_{j}^{l}$, we have $\mu(F_{j}^{l}) > 0$ with $P_{0} \in (F_{j}^{l})^{\circ}$ and $F_{1}^{l + 1} \subset F_{1}^{l}$, we take $P_{j}^{l} \in F_{j}^{l}$ such that $P_{1}^{l} = P_{0}$ holds for any $j$ and $l$. Then the measure $\mu_{l} = \sum\limits_{j=1}^{m_{l}}\mu(F_{j}^{l})\delta_{P_{j}^{l}}$ satisfies 
   $$\mu_{l}(\bar{D}) = \mu(\bar{D}) \leq (1 - C_{\varepsilon})\displaystyle\int_{\bar{\Omega}}f(x)dx.$$
  From Theorem \ref{thm6.1}, there exists a near field refractor $\mathcal{R}_{l} = \{\rho_{l}(x)x;~\rho_{l}(x) = \max\limits_{1 \leq j \leq m_{l}}h(x,P_{j}^{l},b_{j})\}$, such that $\mu_{l}(F) \leq \displaystyle\int_{\mathcal{T}_{\mathcal{R}_{l}}(F)}f(x)t_{\mathcal{R}_{l}}(x)dx$, and the equality holds when $P_{0} \notin F$.
  
  Let $x_{0},x_{1} \in \bar{\Omega}$ and suppose that $\Gamma(P_{0},b_{0})$ supports $\mathcal{R}_{l}$ at $\rho(x_{0})x_{0}$. From Lemma \ref{lem3.2}, $\rho_{l}$ is Lipschitz continuous. From the proof of Theorem 3.6 in \cite{St17}, $\rho_{l}$ is uniformly bounded. Hence $\{\rho_{l};~l \geq 1\}$ is a family of  uniformly bounded and equicontinuous functions. Then from  Arezlà-Ascoli Theorem, $\rho_{l} \rightarrow \rho$ uniformly on $\bar{\Omega}$.
  
  Then we need to prove that $\mathcal{R} = \{\rho(x)x;~x\in \bar{\Omega}\}$ is a near field refractor.
  
  Indeed, fix $x_{0} \in \bar{\Omega}$, then there exist $P_{l}$ and $b_{l}$, such that $\Gamma(P_{l},b_{l})$ supports $\mathcal{R}_{l}$ at $\rho(x_{0})x_{0}$, namely
  $$\rho_{l}(x_{0}) = h(x_{0},P_{l},b_{l}) \quad \text{and} \quad \rho_{l}(x) \geq h(x,P_{l},b_{l}) \quad \text{for all}~x \in \bar{\Omega}.$$
  From Lemma \ref{lem3.8}, we have
  \begin{equation*}
     C_{1}(1 - \kappa) + \kappa \vert P_{l} \vert \leq b_{l} \leq \sqrt{\kappa^{2} (\vert P_{l} \vert^{2} - C_{1}^{2}) + C_{1}^{2}},
  \end{equation*}
  then there exist $P_{0}$ and $b_{0}$, such that $P_{l} \rightarrow P_{0}$ and $b_{l} \rightarrow b_{0}$ as $l \rightarrow \infty$. 
  
  Now we need to verify that $\Gamma(P_{0},b_{0})$ supports $\mathcal{R}$ at $\rho(x_{0})x_{0}$. For we have
  \begin{equation*}
    \rho(x_{0}) = \lim\limits_{l\rightarrow \infty}\rho_{l}(x_{0}) = \lim\limits_{l\rightarrow \infty}h(x_{0},P_{l},b_{l}) = h(x_{0},P_{0},b_{0})
  \end{equation*}
  and
  $$\rho(x) = \lim\limits_{l\rightarrow \infty}\rho_{l}(x) \geq \lim\limits_{l\rightarrow \infty}h(x,P_{l},b_{l}) = h(x,P_{0},b_{0}) \quad \text{for any}~ x \in \bar{\Omega}.$$
  Hence $\Gamma(P_{0},b_{0})$ supports $\mathcal{R}$ at $\rho(x_{0})x_{0}$.
  
  Finally, we need to prove that $G_{\mathcal{R}_{l}}(F) = \displaystyle\int_{\mathcal{T}_{\mathcal{R}_{l}}(F)}f(x)t_{\mathcal{R}_{l}}(x)dx$ converges to $G_{\mathcal{R}}(F) = \displaystyle\int_{\mathcal{T}_{\mathcal{R}}(F)}f(x)t_{\mathcal{R}}(x)dx$ weakly as $l \rightarrow \infty$. Namely, $\varlimsup\limits_{l \rightarrow \infty}G_{\mathcal{R}_{l}}(F) \leq G_{\mathcal{R}}(F)$ holds for any compact sets $F \subset \bar{D}$ and $G_{\mathcal{R}}(U) \leq \varliminf\limits_{l \rightarrow \infty}G_{\mathcal{R}_{l}}(U)$ holds for any open sets $U \subset \bar{D}$.
  
  Indeed, from Theorem \ref{thm4.1}, for any compact set $F$, we have
  \begin{equation*}
    \begin{aligned}
    \varlimsup_{l \rightarrow \infty}G_{\mathcal{R}_{l}}(F) & = \varlimsup_{l \rightarrow \infty}\int_{\mathcal{T}_{\mathcal{R}_{l}}(F)}f(x)t_{\mathcal{R}_{l}}(x)dx \\
    & \leq \int_{\bar{\Omega}}\varlimsup_{l \rightarrow \infty}\chi_{\mathcal{T}_{\mathcal{R}_{l}}(F)}(x)f(x)t_{\mathcal{R}_{l}}(x)dx \\
    & \leq \int_{\mathcal{T}_{\mathcal{R}}(F)}f(x)t_{\mathcal{R}}(x)dx = G_{\mathcal{R}}(F).
    \end{aligned}
  \end{equation*}
  For any open set $U$, we have
  \begin{equation*}
    \begin{aligned}
    G_{\mathcal{R}}(U) & = \int_{\mathcal{T}_{\mathcal{R}}(U)}f(x)t_{\mathcal{R}}(x)dx \\
    & \leq \int_{\bar{\Omega}}\varliminf_{l \rightarrow \infty}\chi_{\mathcal{T}_{\mathcal{R}_{l}}(U)}(x)f(x)t_{\mathcal{R}_{l}}(x)dx \\
    & \leq \varliminf_{l \rightarrow \infty}\int_{\bar{\Omega}}\chi_{\mathcal{T}_{\mathcal{R}_{l}}(U)}(x)f(x)t_{\mathcal{R}_{l}}(x)dx \\
    & = \varliminf_{l \rightarrow \infty}G_{\mathcal{R}_{l}}(U).
  \end{aligned}
  \end{equation*}
  So we have $G_{\mathcal{R}_{l}}(F) \rightarrow G_{\mathcal{R}}(F)$ weakly as $l \rightarrow \infty$.
  
  Since $\mu_{l}(F) = G_{\mathcal{R}_{l}}(F)$ holds for any $F$ satisfies $P_{0} \notin F$, then $\mu_{l} \rightarrow G_{\mathcal{R}}$ weakly as $l \rightarrow \infty$ on $\bar{D} \backslash \{P_{0}\}$. We also have $\mu_{l} \rightarrow \mu$ weakly as $l \rightarrow \infty$, hence $\mu(F) = G_{\mathcal{R}}(F)$ holds for $P_{0} \notin F$. For any Borel set $E \subset \bar{D}$, we have $\mu_{l}(E) \leq G_{\mathcal{R}_{l}}(E)$. Hence $\mu(E) \leq G_{\mathcal{R}}(E)$.  
\end{proof}

\subsection{Case $-1 < \kappa < 0$}\label{Section 6.2}

In this subsection, we discuss the existence of the weak solution for the case $-1< \kappa < 0$. Similar to Section \ref{Section 6.1}, we first assume that $\mu$ equals finite sum of $\delta$-measures, hence all rays are refracted into finite directions. In order to establish the existence of the weak solution to the near field refraction problem with loss of energy for the case $-1< \kappa < 0$ when $\mu$ is discrete measure, see Theorem \ref{thm6.2}, we also need some lemmas, see Lemmas \ref{lem6.4} $-$ \ref{lem6.6}.

\begin{remark}
  Suppose that $P_{1},P_{2},\ldots, P_{m},~m \geq 2$ are discrete points in $\bar{D}$, then for $\mathbf{b} = (b_{1},b_{2},\cdots ,b_{m}) \in \mathbb{R}^{m}$, the refractor is defined as
  \begin{equation}\label{6.9}
    \mathcal{R}(\mathbf{b}) = \{\rho(x)x;~x\in \bar{\Omega},~\rho(x)=\min_{1\leq j \leq m}h(x,P_{j},b_{j})\},
  \end{equation}
  where
  \begin{equation*}
    h(x,P_{j},b_{j}) = \frac{( b_{j} - \kappa^{2} x\cdot P_{j}) + \sqrt{(b_{j} - \kappa^{2} x\cdot P_{j})^{2} - (1 - \kappa^{2})( b_{j}^{2} - \kappa^{2}\vert P_{j} \vert^{2} )}}{1 - \kappa^{2}}.
  \end{equation*}
  \label{rem6.2}
\end{remark}

Now we show the existence of the weak solution when $\mu$ equals finite sum of $\delta$-measures.

\begin{theorem}
  Suppose that the assumptions $(B1)$ $-$ $(B5)$ hold. Let $\mu$ be the Borel measure defined on $\bar{D}$ by $\mu = \sum\limits_{i = 1}^{m}g_{i}\delta_{m_{i}}(\omega)$ with $\mu(\bar{D}) = \sum\limits_{j = 1}^{m} g_{j}$, where $\omega \subset \bar{D}$ is a Borel set. Then for any $b_{1}$ satisfies the condition $\kappa \vert P_{1} \vert <b_{1} \leq \kappa \vert P_{1} \vert + r_{0}(1 + \kappa)$, there exists $(b_{2},\ldots,b_{m})$, such that the refracting oval $\mathcal{R} = \{\rho(x)x;~x\in \bar{\Omega}\}$ is the weak solution to the near field refraction problem with loss of energy for the case $-1< \kappa < 0$, where $\rho(x) = \min\limits_{1 \leq j \leq m}h(x,P_{j},b_{j})$.
  \label{thm6.2}
\end{theorem}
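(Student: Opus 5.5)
The plan is to mirror the proof of Theorem \ref{thm6.1}, with the inequalities reversed: here the refractor is a minimum of ovals, and $h(x,P,b)$ is increasing in $b$ for $-1<\kappa<0$. Fix $b_{1}$ with $\kappa|P_{1}|<b_{1}\le \kappa|P_{1}|+r_{0}(1+\kappa)$. Lemma \ref{lem3.3}(b) then gives $h(x,P_{1},b_{1})\le r_{0}$, so $\mathcal{R}(\mathbf{b})\subset\mathcal{C}_{r_{0}}$ for every admissible $\mathbf{b}$.

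Introduce
\[
W':=\Bigl\{(b_{2},\ldots,b_{m});~\kappa|P_{j}|<b_{j}<|P_{j}|,\ \int_{\mathcal{T}_{\mathcal{R}(\mathbf{b})}(P_{j})}f\,t_{\mathcal{R}(\mathbf{b})}\,dx\le g_{j},\ 2\le j\le m\Bigr\}.
\]
This plays the role of Lemma \ref{lem6.1}; call it Lemma \ref{lem6.4}. The steps are as follows.

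\emph{Nonemptiness.} Take each $b_{j}$, $j\ge2$, close to $|P_{j}|$. By Lemma \ref{lem3.3}(a), $\min h(\cdot,P_{j},b_{j})=\frac{b_{j}-\kappa|P_{j}|}{1-\kappa}$. Using the upper bound on $b_{1}$ together with (B3), this exceeds $\max h(\cdot,P_{1},b_{1})=\frac{b_{1}-\kappa|P_{1}|}{1+\kappa}$. Hence $\rho=h(\cdot,P_{1},b_{1})$, every $\mathcal{T}(P_{j})$ with $j\ge2$ is empty or null, and the energy constraints hold.

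\emph{Lower bound.} Every $\mathbf{b}\in W'$ has $b_{j}\ge \delta_{j}$. Here $\delta_{j}$ comes from comparing $\rho(x_{0})\ge\min h(\cdot,P_{j},b_{j})$ with $\rho(x_{0})\le h(x_{0},P_{1},b_{1})\le \frac{b_{1}-\kappa|P_{1}|}{1+\kappa}$. This keeps $\rho$ between $C_{2}:=\frac{b_{1}-\kappa|P_{1}|}{1-\kappa}\cdot c$ and $r_{0}$, which places us under Lemma \ref{lem3.8}.

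\emph{Uniform convergence (Lemma \ref{lem6.5}).} A minimum of finitely many functions depending continuously on the $b_{j}$ converges uniformly when the $b_{j}$ converge.

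\emph{Continuity (Lemma \ref{lem6.6}).} The map $\mathbf{b}\mapsto G_{\mathcal{R}(\mathbf{b})}(\{P_{j}\})$ is continuous on $\{b_{j}\ge\delta\}$. The proof copies that of Lemma \ref{lem6.3}: use Lemma \ref{lem3.9}(b),(c) on a neighbourhood $G$ of $P_{j}$ isolating it from the other points, combine with Theorem \ref{thm4.1} and Fatou's lemma, and note that the singular set $E$ is null by Lemma \ref{lem3.4}.

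\emph{Compactness and extremal choice.} Continuity makes $W'$ closed. I also need $W'$ bounded and away from the open endpoints; this is where the lower bound and the constraint $b_{j}<|P_{j}|$ enter. Then \emph{minimize} $d(\mathbf{b})=b_{2}+\cdots+b_{m}$ over $W'$; the sign is opposite to Theorem \ref{thm6.1} because of the min-structure. Call the minimizer $\mathbf{a}$.

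\emph{Equalities for $j\ge2$.} Suppose $G_{\mathcal{R}(\mathbf{a})}(P_{2})<g_{2}$. Decrease $a_{2}$ slightly. This lowers $h(\cdot,P_{2},\cdot)$, so by continuity the $P_{2}$-energy is still below $g_{2}$. The other traces $\mathcal{T}(P_{j})$ can only shrink, up to null sets. The new point therefore stays in $W'$ with smaller $d$, a contradiction. Hence $G_{\mathcal{R}(\mathbf{a})}(P_{j})=g_{j}$ for $j\ge2$.

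\emph{The $P_{1}$ inequality.} Since the traces cover $\bar\Omega$ and overlap only on null sets, Proposition \ref{prop4.1} and (B5) give
\[
\sum_{j}G(P_{j})=\int_{\bar\Omega}f\,t\ge(1-C_{\varepsilon})\int f\ge\sum g_{j}.
\]
Hence $G(P_{1})\ge g_{1}$. The strict inequality follows by the same argument as before: $\psi$ attains the value $C_{\varepsilon}$ at only finitely many $s$, while $x\cdot P_{1}/|P_{1}|$ takes infinitely many values on a set of positive measure. Finally, decompose $\mathcal{T}_{\mathcal{R}}(F)$ into $\bigcup_{j}\mathcal{T}(F\cap\{P_{j}\})$ to get $G_{\mathcal{R}}(F)\ge\mu(F)$, with equality when $P_{1}\notin F$.

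The main obstacle is the compactness and perturbation step. I must check that the perturbation $a_{2}-\epsilon$ stays above $\kappa|P_{2}|$ and inside $W'$, i.e. that the minimizer is not on the boundary $b_{j}=\delta_{j}$. The approach is to show that at $b_{j}$ near $\kappa|P_{j}|$ the oval $h(\cdot,P_{j},b_{j})$ is so small that it is the minimum on all of $\bar\Omega$. Then $G(P_{j})\ge(1-C_{\varepsilon})\int f>g_{j}$, which violates the constraint. I also need to verify the monotonicity of $h$ in $b$ directly from its formula.
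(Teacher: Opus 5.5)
Your overall architecture matches the paper's: the admissible set of Lemma \ref{lem6.4}, minimizing $d$, the $a_2-\epsilon$ perturbation, and the energy count for $P_1$. However, your \emph{Lower bound} step fails as written. From
\[
\frac{b_j-\kappa|P_j|}{1-\kappa}=\min h(\cdot,P_j,b_j)\le\rho(x_0)\le\frac{b_1-\kappa|P_1|}{1+\kappa}
\]
you get $b_j\le\kappa|P_j|+\frac{1-\kappa}{1+\kappa}(b_1-\kappa|P_1|)$, which is an \emph{upper} bound. Even that holds only when $x_0\in\mathcal{T}(P_j)$, since for a minimum of ovals you only know $\rho\le h(\cdot,P_j,b_j)$.

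The comparison must go the other way. Pick $x_0\in\mathcal{T}_{\mathcal{R}(\mathbf{b})}(P_1)$ and use
\[
\frac{b_1-\kappa|P_1|}{1-\kappa}\le h(x_0,P_1,b_1)=\rho(x_0)\le h(x_0,P_j,b_j)\le\frac{b_j-\kappa|P_j|}{1+\kappa},
\]
which is Lemma \ref{lem6.4}(b). What you must supply is that such an $x_0$ exists for every $\mathbf{b}\in W'$. This follows from the constraints themselves: $\sum_{j\ge2}G(P_j)\le\sum_{j\ge2}g_j<\mu(\bar D)\le(1-C_\varepsilon)\int_{\bar\Omega}f\le\int_{\bar\Omega}f\,t_{\mathcal{R}}$ forces $|\mathcal{T}(P_1)|>0$. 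Without this bound you have neither compactness of $W'$ nor the uniform lower bound $\rho\ge\frac{1+\kappa}{(1-\kappa)^2}(b_1-\kappa|P_1|)$, which Lemmas \ref{lem3.8} and \ref{lem3.9} need in your continuity step.

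Your closing paragraph contains the right mechanism but states it incorrectly. If $b_j$ is near $\kappa|P_j|$, $h(\cdot,P_j,b_j)$ need not be the minimum on all of $\bar\Omega$, because other $b_k$ may be small too. So $G(P_j)\ge(1-C_\varepsilon)\int f$ does not follow. What does follow is $\mathcal{T}(P_1)=\emptyset$, hence $\sum_{j\ge2}G(P_j)>\sum_{j\ge2}g_j$ — exactly the argument above.

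Once the bound holds on all of $W'$, your ``main obstacle'' disappears. For small $\epsilon$ you automatically have $a_2-\epsilon>\kappa|P_2|$. The perturbed point lies in $W'$ by continuity of $G(P_2)$ and monotonicity of $h$ in $b$ alone, since lowering $b_2$ only shrinks the other traces.

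Separately, your upper constraint $b_j<|P_j|$ is open, so $W'$ is not compact. Either cap at $(\tau-\kappa)|P_j|$ as the paper does, or note the following. For $b_j\ge(\tau-\kappa)|P_j|$ one has $\min h(\cdot,P_j,b_j)\ge\frac{(\tau-2\kappa)|P_j|}{1-\kappa}>r_0\ge\rho$. Such $b_j$ can therefore be lowered to $(\tau-\kappa)|P_j|$ without changing $\mathcal{R}$, so minimizing sequences may be taken in a compact box. A minor point: nonemptiness uses $r_0<|P_j|$, which comes from $(B0\text{-}2)$, not from $(B3)$.
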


Similar to Section \ref{Section 6.1}, in order to prove Theorem \ref{thm6.2}, we also need the following Lemmas \ref{lem6.4} $-$ \ref{lem6.6}.

\begin{lemma}
  Suppose that assumptions $(B1)$ and $(B2)$ hold. Define a set $W := \{(b_{2},\ldots,b_{m});~\kappa \vert P_{j} \vert < b_{j} \leq (\tau - \kappa)\vert P_{j} \vert,~j = 2,\ldots,m\}$, and for any $\mathbf{b} \in W$, we have $\displaystyle\int_{\mathcal{T}_{\mathcal{R}(\mathbf{b})}(P_{j})}f(x)t_{\mathcal{R}(\mathbf{b})}(x)dx \leq g_{i}, 2 \leq i \leq m$, then
  \newline (a) $W \neq \emptyset$;
  \newline (b) If $\mathbf{b} = (b_{2}, \ldots ,b_{m}) \in W$, then for $2 \leq j \leq m$, we have 
  \begin{equation*}
    b_{j} \geq \kappa \vert P_{j} \vert + \dfrac{1 + \kappa}{1 - \kappa}(b_{1} - \kappa \vert P_{1} \vert).
  \end{equation*}
  \label{lem6.4}
\end{lemma}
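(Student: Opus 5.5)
The plan is to mirror the proof of Lemma \ref{lem6.1}, with two changes: refractors in the case $-1<\kappa<0$ are lower envelopes $\rho=\min_j h(x,P_j,b_j)$, and we use the bounds of Lemma \ref{lem3.3} instead of Lemma \ref{lem3.1}.

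For part (a), we exhibit an explicit element of $W$. Fix $b_1$ with $\kappa|P_1|<b_1\le \kappa|P_1|+r_0(1+\kappa)$, and take $b_j=(\tau-\kappa)|P_j|$ for $j\ge 2$, the largest admissible value. The claim is that then $\mathcal{R}(\mathbf{b})=\mathcal{O}(P_1,b_1)$ restricted to $\bar\Omega$, i.e. $h(x,P_1,b_1)< h(x,P_j,b_j)$ for all $x\in\bar\Omega$ and $j\ge 2$.

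By Lemma \ref{lem3.3}(b),
\[
h(x,P_1,b_1)\le \frac{b_1-\kappa|P_1|}{1+\kappa}\le r_0 .
\]
By Lemma \ref{lem3.3}(a),
\[
h(x,P_j,b_j)\ge \frac{b_j-\kappa|P_j|}{1-\kappa}=\frac{\tau-2\kappa}{1-\kappa}\,|P_j|\ \ge\ \frac{\tau}{1-\kappa}\,\mathrm{dist}(0,\bar D).
\]
Condition (B0-2) gives $r_0<\frac{\tau}{1-\kappa}\mathrm{dist}(0,\bar D)$, so the claim follows. The upper bound $b_j\le(\tau-\kappa)|P_j|$ also lies below $|P_j|$, since $\tau<1+\kappa$. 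With this choice $\mathcal{T}_{\mathcal{R}(\mathbf{b})}(P_j)\subset E$ for $j\ge2$, a null set, so $G_{\mathcal{R}(\mathbf{b})}(\{P_j\})=0\le g_j$, and hence $\mathbf{b}\in W$. If strict inequality turns out to be delicate at the endpoints, I would take $b_j$ slightly smaller, noting that $h$ is increasing in $b$ for this oval.

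For part (b), take $\mathbf{b}\in W$ and fix $j\ge 2$. Since $\rho=\min_i h(x,P_i,b_i)$, for every $x\in\bar\Omega$ we have $\rho(x)\le h(x,P_1,b_1)$ and $\rho(x)\le h(x,P_j,b_j)$. The step I expect to be the main obstacle is obtaining a lower bound for $\rho$ in terms of $b_1$. Using only $\rho(x)\le h(x,P_j,b_j)\le\frac{b_j-\kappa|P_j|}{1+\kappa}$ together with some lower bound $\rho(x_0)\ge\frac{b_1-\kappa|P_1|}{1-\kappa}$ at a suitable point would yield
\[
\frac{b_j-\kappa|P_j|}{1+\kappa}\ \ge\ \frac{b_1-\kappa|P_1|}{1-\kappa},
\]
which is exactly the claimed inequality after multiplying by $1+\kappa>0$.

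To get that lower bound, I would argue that the minimum defining $\rho$ at a point $x_0$ is attained by some index. At points $x_0\in\mathcal{T}_{\mathcal{R}}(P_1)$ we have $\rho(x_0)=h(x_0,P_1,b_1)\ge\frac{b_1-\kappa|P_1|}{1-\kappa}$ by Lemma \ref{lem3.3}(a), and at the same point $\rho(x_0)\le h(x_0,P_j,b_j)\le \max h(\cdot,P_j,b_j)$. That set is nonempty: otherwise $\mathcal{T}(P_1)$ is empty and all the energy goes to $P_2,\dots,P_m$, which contradicts $\sum_{j\ge2}g_j<(1-C_\varepsilon)\int f$ by (B5) and Proposition \ref{prop4.1}. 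Should $\mathcal{T}(P_1)$ have measure zero, the same contradiction applies, since the total transmitted energy $\int f t\ge(1-C_\varepsilon)\int f$ exceeds $\sum_{j\ge2}g_j$. Combining the two inequalities at $x_0$ gives (b).
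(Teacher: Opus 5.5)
Your proposal is correct and follows the paper's argument. For (a) you use the same choice $b_j=(\tau-\kappa)|P_j|$ and the same chain through Lemma \ref{lem3.3} and (B0-2). For (b) you use the same comparison $\frac{b_1-\kappa|P_1|}{1-\kappa}\le\rho(x_0)\le h(x_0,P_j,b_j)\le\frac{b_j-\kappa|P_j|}{1+\kappa}$ at a point where the $P_1$-oval is active.

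The one difference is that the paper asserts such an $x_0$ exists simply because $\mathcal{R}(\mathbf{b})$ is a refractor, whereas your energy argument actually proves it, using (B5), Proposition \ref{prop4.1} and the constraint $G_{\mathcal{R}(\mathbf{b})}(\{P_j\})\le g_j$ defining $W$. That argument is genuinely needed: without an energy condition, taking $g_2$ large and $b_2$ close to $\kappa|P_2|$ gives an element of $W$ where the $P_1$-oval is never active, and (b) fails. A small precision: points of $\mathcal{T}_{\mathcal{R}}(P_1)$ where $h(\cdot,P_1,b_1)$ is not the active branch lie in the null singular set by (B0-2), so positive measure of $\mathcal{T}_{\mathcal{R}}(P_1)$ does give $\rho(x_0)=h(x_0,P_1,b_1)$ somewhere.
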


\begin{proof}
  $(a)$ Let $b_{j} = (\tau - \kappa)\vert P_{j} \vert$ for $j = 2,\ldots,m$, we claim that $\mathcal{R}(\mathbf{b}) = \mathcal{O}(P_{1},b_{1})$, where $\mathcal{O}(P_{1},b_{1}) = \{h(x,P_{1},b_{1});~x\in S^{n-1},x \cdot P_{1} \geq b_{1}\}$.
  
  Indeed, from Lemma \ref{lem3.3}, we have
  \begin{equation*}
    \begin{aligned}
    h(x,P_{1},b_{1}) & \leq \frac{b_{1} - \kappa \vert P_{1} \vert}{1 + \kappa} \leq r_{0} \leq \frac{\tau}{1 - \kappa}dist(0,\bar{D}) \\
     & \leq \frac{\tau \vert P_{j} \vert}{1 - \kappa} = \frac{b_{j} + \kappa \vert P_{j} \vert}{1 - \kappa} \\
     & \leq \frac{b_{j} - \kappa \vert P_{j} \vert}{1 - \kappa} \leq h(x,P_{j},b_{j}),
    \end{aligned}
  \end{equation*}
  where we have used the conditiom $\kappa \vert P_{1} \vert < b_{1} \leq \kappa \vert P_{1} \vert + r_{0}(1 + \kappa)$ in the second inequality. Then for $j \neq 1$, we have $G_{\mathcal{R}(\mathbf{b})}(P_{j}) = 0 \leq g_{i}$. Hence $W \neq \emptyset$.
  
  $(b)$ For $\mathcal{R}(\mathbf{b})$ is a near field refractor, then there exists $x_{0} \in \bar{\Omega}$, such that $\mathcal{O}(P_{1},b_{1})$ supports $\mathcal{R}(\mathbf{b})$ at $\rho(x_{0})x_{0}$, then we have
  \begin{equation*}
    \frac{b_{1} - \kappa \vert P_{1} \vert}{1 - \kappa} \leq \rho(x_{0}) \leq h(x_{0},P_{j},b_{j}) \leq \frac{b_{j} - \kappa \vert P_{j} \vert}{1 + \kappa}.
  \end{equation*}
  Hence $b_{j} \geq \kappa \vert P_{j} \vert + \dfrac{1 + \kappa}{1 - \kappa}(b_{1} - \kappa \vert P_{1} \vert)$.
\end{proof}

The following two lemmas are similar to Lemmas \ref{lem6.2} and \ref{lem6.3}.

\begin{lemma}
  Suppose that $\mathbf{b}_{k} = (b_{2}^{k},\ldots,b_{m}^{k}) \in W$ and $\mathbf{b}_{0} = (b_{2}^{0},\ldots,b_{m}^{0})$, such that $\mathbf{b}_{k} \rightarrow \mathbf{b}_{0}$ as $k \rightarrow \infty$. Let $\mathcal{R}_{k} = \mathcal{R}(\mathbf{b}_{k}) = \{\rho_{k}(x)x;~x \in \bar{\Omega}\}$ and $\mathcal{R}_{0} = \mathcal{R}(\mathbf{b}_{0}) = \{\rho(x)x;~x \in \bar{\Omega}\}$, then $\rho_{k} \rightarrow \rho$ uniformly on $\bar{\Omega}$.
  \label{lem6.5}
\end{lemma}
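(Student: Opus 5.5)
The plan is to prove uniform convergence directly from the structure $\rho_k(x)=\min_{1\le j\le m}h(x,P_j,b_j^k)$, with $b_1^k=b_1$ fixed. I would rely on uniform continuity of $h$ in the parameter $b$ rather than on compactness. The key elementary fact is
\[
\Big|\min_j a_j-\min_j c_j\Big|\le \max_j|a_j-c_j| .
\]
Applying it gives
\[
\sup_{x\in\bar\Omega}|\rho_k(x)-\rho(x)|\le \max_{2\le j\le m}\sup_{x\in\bar\Omega}|h(x,P_j,b_j^k)-h(x,P_j,b_j^0)|.
\]
So it suffices to show that, for each fixed $j$, $h(\cdot,P_j,b)\to h(\cdot,P_j,b_j^0)$ uniformly on $\bar\Omega$ as $b\to b_j^0$.

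First I would locate the limit $\mathbf b_0$. By Lemma \ref{lem6.4}(b), each $b_j^k$ satisfies
\[
\kappa|P_j|+\tfrac{1+\kappa}{1-\kappa}(b_1-\kappa|P_1|)\le b_j^k\le(\tau-\kappa)|P_j|.
\]
Since $b_1>\kappa|P_1|$, passing to the limit keeps $b_j^0$ in a compact interval $[\beta_j,\gamma_j]$ with $\beta_j>\kappa|P_j|$. This stays strictly away from the degenerate value $b=\kappa|P_j|$.

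Next I would examine the explicit formula for $h(x,P_j,b)$ in Remark \ref{rem6.2}. It is a rational-plus-square-root expression in $(x\cdot P_j,b)$, continuous wherever the discriminant
\[
\Delta(x,b)=(b-\kappa^2x\cdot P_j)^2-(1-\kappa^2)(b^2-\kappa^2|P_j|^2)
\]
is nonnegative. I would check that $\Delta\ge0$ on $S^{n-1}\times[\beta_j,\gamma_j]$; this is automatic because $\mathcal O(P_j,b)$ is a genuine oval for $\kappa|P_j|<b<|P_j|$. Also $\gamma_j=(\tau-\kappa)|P_j|<|P_j|$ because $\tau<1+\kappa$. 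Then $h$ is continuous on the compact set $S^{n-1}\times[\beta_j,\gamma_j]$, hence uniformly continuous, and this gives the required uniform convergence in $x$.

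The main obstacle is the square root: wherever $\Delta$ vanishes, $h$ is only Hölder-continuous in $b$, not Lipschitz. Uniform continuity on a compact set still suffices, so I only need joint continuity. If instead I wanted a quantitative rate, I would use $|\sqrt{u}-\sqrt{v}|\le\sqrt{|u-v|}$ to get a $C|b-b'|^{1/2}$ bound. Either way, I would combine these bounds with Lemma \ref{lem3.3}(a),(b), which give $0<C_2\le\rho_k\le r_0$ uniformly. That uniform bound is the form needed later for Lemma \ref{lem3.8} and the continuity lemma that follows.
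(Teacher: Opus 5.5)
Your argument is correct, but it is not the route the paper takes.

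\textbf{The paper's route.} The paper's proof only records the two-sided bound $\frac{1+\kappa}{(1-\kappa)^{2}}(b_{1}-\kappa|P_{1}|)\le\rho_{k}(x)\le r_{0}$, obtained from (\ref{6.10}) (i.e.\ Lemma \ref{lem6.4}(b)) and Lemma \ref{lem3.3}. It then simply asserts uniform convergence. Implicitly, the bound is meant to place every $\mathcal{R}_{k}$ in $\mathcal{C}_{r_{0}}$. The equi-Lipschitz estimate of Lemma \ref{lem3.4}, combined with pointwise convergence, would then upgrade to uniform convergence on the compact set $\bar{\Omega}$. The paper leaves these steps unstated.

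\textbf{Your route.} You bypass compactness altogether. The inequality $|\min_j a_j-\min_j c_j|\le\max_j|a_j-c_j|$ reduces the claim to uniform continuity of the explicit $h(\cdot,P_j,\cdot)$ on $S^{n-1}\times[\beta_j,\gamma_j]$. This is self-contained, and it actually closes the gap between ``uniformly bounded'' and ``uniformly convergent'' that the paper glosses over. What the paper's version produces that is used later is the two-sided bound, which is the hypothesis of Lemma \ref{lem3.8} needed in Lemma \ref{lem6.6}. You recover that bound in your last paragraph as well.

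\textbf{A simplification.} Your concern about the square root is unnecessary. One computes $\Delta(x,b)=\kappa^{2}\bigl[(b-x\cdot P_{j})^{2}+(1-\kappa^{2})(|P_{j}|^{2}-(x\cdot P_{j})^{2})\bigr]$. This vanishes only if $b=x\cdot P_j=\pm|P_j|$. Since $[\beta_j,\gamma_j]\subset(-|P_j|,|P_j|)$, $\Delta$ is bounded below by a positive constant on $S^{n-1}\times[\beta_j,\gamma_j]$. Hence $h$ is smooth there, and you even get the Lipschitz rate $\sup_{\bar\Omega}|\rho_k-\rho|\le C\max_j|b_j^k-b_j^0|$ rather than a H\"older one.
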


\begin{proof}
  From the proof of Lemma \ref{lem6.4} $(b)$, we have 
  \begin{equation}\label{6.10}
    \frac{b_{1} - \kappa \vert P_{1} \vert}{1 - \kappa} \leq \rho_{k}(x).
  \end{equation}
  Multiplying  both sides of (\ref{6.10}) by $\dfrac{1 + \kappa}{1 - \kappa} < 1$ and combining it with Lemma \ref{lem3.3}, we have
  \begin{equation*}
    \frac{1 + \kappa}{(1 - \kappa)^{2}}(b_{1} - \kappa \vert P_{1} \vert) \leq \frac{1 + \kappa}{1 - \kappa}\rho_{k}(x) \leq \rho_{k}(x) = \min_{1 \leq j \leq m}h(x,P_{j},b_{j}^{k}) \leq r_{0},
  \end{equation*}
  where $b_{1}^{k} = b_{1}$. Hence $\rho_{k}(x) \rightarrow \rho(x)$ uniformly on $\bar{\Omega}$.
\end{proof}

\begin{lemma}
  Suppose that $\delta \geq \vert P_{j} \vert + \dfrac{1 + \kappa}{1 - \kappa}(b_{1} - \kappa \vert P_{1} \vert)$, then $G_{\mathcal{R}(\mathbf{b})}(\{P_{j}\}) = \displaystyle\int_{\mathcal{T}_{\mathcal{R}(\mathbf{b})}(P_{j})}f(x)t_{\mathcal{R}}(x)dx$ is continuous on the region $R_{\delta} = \{(b_{2},\ldots,b_{m});~b_{j} \geq \delta,j=2,\ldots,m\}$.
  \label{lem6.6}
\end{lemma}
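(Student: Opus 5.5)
We mirror the proof of Lemma \ref{lem6.3}, replacing the ovals $\Gamma(P,b)$ by $\mathcal{O}(P,b)$ and the supremum by an infimum, since now $\rho=\min_j h(x,P_j,b_j)$. Take a sequence $\mathbf{b}_k=(b_2^k,\ldots,b_m^k)\to\mathbf{b}_0$ in $R_\delta$, with $b_1^k=b_1$ fixed, and write $\mathcal{R}_k=\mathcal{R}(\mathbf{b}_k)=\{\rho_k(x)x\}$ and $\mathcal{R}_0=\mathcal{R}(\mathbf{b}_0)=\{\rho_0(x)x\}$.

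First we secure the hypotheses of Lemmas \ref{lem3.8}, \ref{lem3.9}, \ref{lem4.1} and Theorem \ref{thm4.1}. By Lemma \ref{lem6.5}, $\rho_k\to\rho_0$ uniformly on $\bar\Omega$. The lower bound on $b_j$ in $R_\delta$, combined with Lemma \ref{lem3.3}(a) and the bound (\ref{6.10}), gives the two-sided estimate
\[
C_2:=\frac{1+\kappa}{(1-\kappa)^2}(b_1-\kappa|P_1|)\le \rho_k(x)\le r_0 .
\]
Hence the supporting parameters are bounded by (\ref{3.7}), $\mathcal{R}_0$ is a near field refractor by Lemma \ref{lem3.9}(a), and the singular set $E$ (the union of the singular sets of all $\mathcal{R}_k$ and of $\mathcal{R}_0$) is null by Lemma \ref{lem3.4} and Remark \ref{rem4.3}.

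Next we localize. Fix $j$ and choose an open neighbourhood $G$ of $P_j$ in $\bar D$ with $P_i\notin\bar G$ for $i\ne j$. Since each $\mathcal{R}_k$ only uses the foci $P_1,\ldots,P_m$, we have $\mathcal{T}_{\mathcal{R}_k}(G)=\mathcal{T}_{\mathcal{R}_k}(P_j)$ and $\mathcal{T}_{\mathcal{R}_k}(\bar G)=\mathcal{T}_{\mathcal{R}_k}(P_j)$, and the same holds for $\mathcal{R}_0$.

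The argument then has two halves.

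\emph{Lower semicontinuity.} By Lemma \ref{lem3.9}(c) applied to the open set $G$,
\[
\mathcal{T}_{\mathcal{R}_0}(P_j)\subset \varliminf_k \mathcal{T}_{\mathcal{R}_k}(P_j)\cup E .
\]
Multiply by $f\,t_{\mathcal{R}_0}$, use Theorem \ref{thm4.1}(b) to replace $t_{\mathcal{R}_0}$ by $\varliminf_k \chi_{\mathcal{T}_{\mathcal{R}_k}(P_j)}t_{\mathcal{R}_k}$ off $E$, and apply Fatou's lemma, which is legitimate because $0\le f t\le f\in L^1$. This yields
\[
G_{\mathcal{R}_0}(P_j)\le\varliminf_k G_{\mathcal{R}_k}(P_j).
\]

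\emph{Upper semicontinuity.} Apply Theorem \ref{thm4.1}(d) to the compact set $F=\{P_j\}$ (or to $\bar G$); it gives directly
\[
\varlimsup_k G_{\mathcal{R}_k}(P_j)\le G_{\mathcal{R}_0}(P_j).
\]
Together the two inequalities give continuity along every sequence, hence on $R_\delta$.

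The main obstacle is ensuring that Theorem \ref{thm4.1}, proved in the $\kappa<-1$ setting from \cite{St16}, genuinely transfers to $-1<\kappa<0$. This rests on Lemma \ref{lem4.1}, namely pointwise subsequential convergence of the normals and hence of $t_{\mathcal{R}_k}$ off $E$. I would verify it using the bound (\ref{3.7}) for compactness of $(P_k,b_k)$, and Remark \ref{rem3.3}, i.e.\ the normal formula $\nu=\frac{z}{|z|}-\kappa\frac{P-z}{|P-z|}$, for convergence of the normals. Because every subsequence has a further subsequence with the same limit $t_{\mathcal{R}_0}(y)$, the full sequence converges a.e.

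A second point to check is the role of the hypothesis on $\delta$: it is needed only so that $R_\delta$ keeps $\rho_k$ bounded below uniformly, which is what Lemma \ref{lem3.8} requires.
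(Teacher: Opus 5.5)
Your proposal is correct and is essentially the paper's proof. The paper simply reruns the argument of Lemma \ref{lem6.3} (Lemma \ref{lem3.9}(c) plus Fatou for the $\varliminf$ inequality, Theorem \ref{thm4.1} for the $\varlimsup$ inequality) with the two-sided bound $\frac{1+\kappa}{(1-\kappa)^{2}}(b_{1}-\kappa|P_{1}|)\le\rho_{k}\le r_{0}$. Your direct use of Theorem \ref{thm4.1}(d) with $F=\{P_j\}$ is a tidier version of the paper's (\ref{6.8}).

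There are two harmless imprecisions. First, $\mathcal{T}_{\mathcal{R}_k}(G)=\mathcal{T}_{\mathcal{R}_k}(P_j)$ holds only off the singular set: at corners, ovals with other foci in $G$ can also support, which is why the paper writes $\mathcal{T}_{\mathcal{R}_k}(G)\subset\mathcal{T}_{\mathcal{R}_k}(P_j)\cup E$. Second, your lower bound $C_2$ actually uses the hypothesis in the form $\delta\ge\kappa|P_j|+\frac{1+\kappa}{1-\kappa}(b_1-\kappa|P_1|)$ from Lemma \ref{lem6.4}(b). The printed $|P_j|$ is evidently a typo, since Definition \ref{def3.3} requires $b_j<|P_j|$.
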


The proof of Lemma \ref{lem6.6} is similar to the proof of Lemma \ref{lem6.3}, for any $x \in \bar{\Omega}$, here we have
  \begin{equation*}
   0< \frac{1 + \kappa}{(1 - \kappa)^{2}}(b_{1} - \kappa \vert P_{1} \vert) \leq \rho_{k}(x) \leq r_{0}.
  \end{equation*}

Based on the above lemmas, now we prove the existence of the weak solution for the case $-1< \kappa < 0$ when $\mu$ equals finite sum of $\delta$-measures.

\begin{proof}[Proof of Theorem \ref{thm6.2}]
  In order to prove that the refracting oval $\mathcal{R} = \{\rho(x)x;~x\in \bar{\Omega}\}$ is the weak solution to the near field refraction problem with loss of energy for the case $-1< \kappa < 0$, we need to verify the following two conditions:
  
  (a) For any Borel set $F \subset \bar{D}$, there holds $\displaystyle\int_{\mathcal{T}_{\mathcal{R}}(F)}f(x)t_{\mathcal{R}}(x)dx \geq \mu(F)$;
  
  (b) For each Borel set with $P_{1} \notin F$, there holds $\displaystyle\int_{\mathcal{T}_{\mathcal{R}}(F)}f(x)t_{\mathcal{R}}(x)dx = \mu(F) = \sum\limits_{j=2}^{m}g_{j}$.

  We consider the mapping $d:W \rightarrow \mathbb{R}:~(b_{2},\ldots,b_{m}) \mapsto b_{2} + \ldots + b_{m}$. From Lemmas \ref{lem6.4} $-$ \ref{lem6.6}, we know that $W$ is a compact set. Hence $d$ can attain its minimum at some points in $W$. Suppose that $d$ attains its maximum at $(a_{2},\ldots,a_{m})$, we claim that $\mathcal{R}(\mathbf{a})$ is the weak solution to the near field refraction problem, where $\mathbf{a} = (b_{1},a_{2},\ldots,a_{m})$.
  
  We first prove that $\displaystyle\int_{\mathcal{T}_{\mathcal{R}(\mathbf{a})}(P_{j})}f(x)t_{\mathcal{R}}(x)dx = g_{j}$ for $j = 2,\ldots,m$.
  
  Indeed, without loss of generality, we assume for contradiction that $\displaystyle\int_{\mathcal{T}_{\mathcal{R}(\mathbf{a})}(P_{2})}f(x)t_{\mathcal{R}}(x)dx < g_{2}$. Let $\varepsilon > 0$ and define $\bar{\mathbf{a}} = (b_{1},a_{2}-\varepsilon,\ldots,a_{m})$, the corresponding refracting oval is denoted as $\mathcal{R}(\bar{\mathbf{a}})$. From the continuity of $G_{\mathcal{R}}$, there holds $G_{\mathcal{R}(\bar{\mathbf{a}})}(P_{2}) = \displaystyle\int_{\mathcal{T}_{\mathcal{R}(\mathbf{a})}(P_{2})}f(x)t_{\mathcal{R}(\mathbf{a})}(x)dx < g_{2}$ for $\varepsilon$ is sufficiently small, and for $j \neq 1,2$, we have $\mathcal{T}_{\mathcal{R}(\bar{\mathbf{a}})}(P_{j}) \subset \mathcal{T}_{\mathcal{R}(\mathbf{a})}(P_{j})$ almost everywhere, hence $\bar{\mathbf{a}} \in W$. This is a contradiction with $(a_{2},\ldots,a_{m})$ is the minimum of $d$ in $W$, so we have $\displaystyle\int_{\mathcal{T}_{\mathcal{R}(\mathbf{a})}(P_{j})}f(x)t_{\mathcal{R}}(x)dx = g_{j}$ for $j = 2,\ldots,m$.
  
  To prove that $\displaystyle\int_{\mathcal{T}_{\mathcal{R}(\mathbf{a})}(P_{1})}f(x)t_{\mathcal{R}}(x)dx > g_{1}$, we proceed exactly as in the proof of Theorem \ref{thm6.1}.
\end{proof}

Based on Theorem \ref{thm6.2}, now we can prove the existence of the weak solution to the near field refraction problem with loss of energy for the case $-1< \kappa < 0$.

\begin{proof}[Proof of Theorem \ref{thm1.2}]
  In order to prove the existence of the weak solution for the case $-1 < \kappa < 0$, we need to show that given $P_{0} \in supp(\mu)$, then for any Borel set $F \subset D$ with $P_{0} \notin F$, there exists a refractor $\mathcal{R}$, such that $\displaystyle\int_{\mathcal{T}_{\mathcal{R}}(F)}f(x)t_{\mathcal{R}}(x)dx \geq \mu(F)$, where $\mu$ is the Radon measure defined on $\bar{D}$.
  
  The process of proof follows from the analogue of Theorem \ref{thm1.1}, which now follows from Theorem \ref{thm6.2} rather than Theorem \ref{thm6.1}.
\end{proof}

\subsection{Case $\kappa = -1$}\label{Section 6.3}

A very interesting phenomenon in negative refraction is the case when $\kappa = -1$. Notice that in positive refraction, if $\kappa = 1$, then the refraction phenomenon do not occur. However, in negative refraction, the refraction phenomenon still occurs when $\kappa = -1$. Specifically, considering (\ref{4.8}), we have $R_{\parallel} = R_{\bot} = 0$ for this case. Thus all the energy is transmitted and nothing is reflected internally. This special refraction phenomenon can be described by Fig \ref{fig5}. In this subsection, we briefly introduce this special phenomenon.

\begin{figure}[h]
  \centering
  \includegraphics[width=8.3cm]{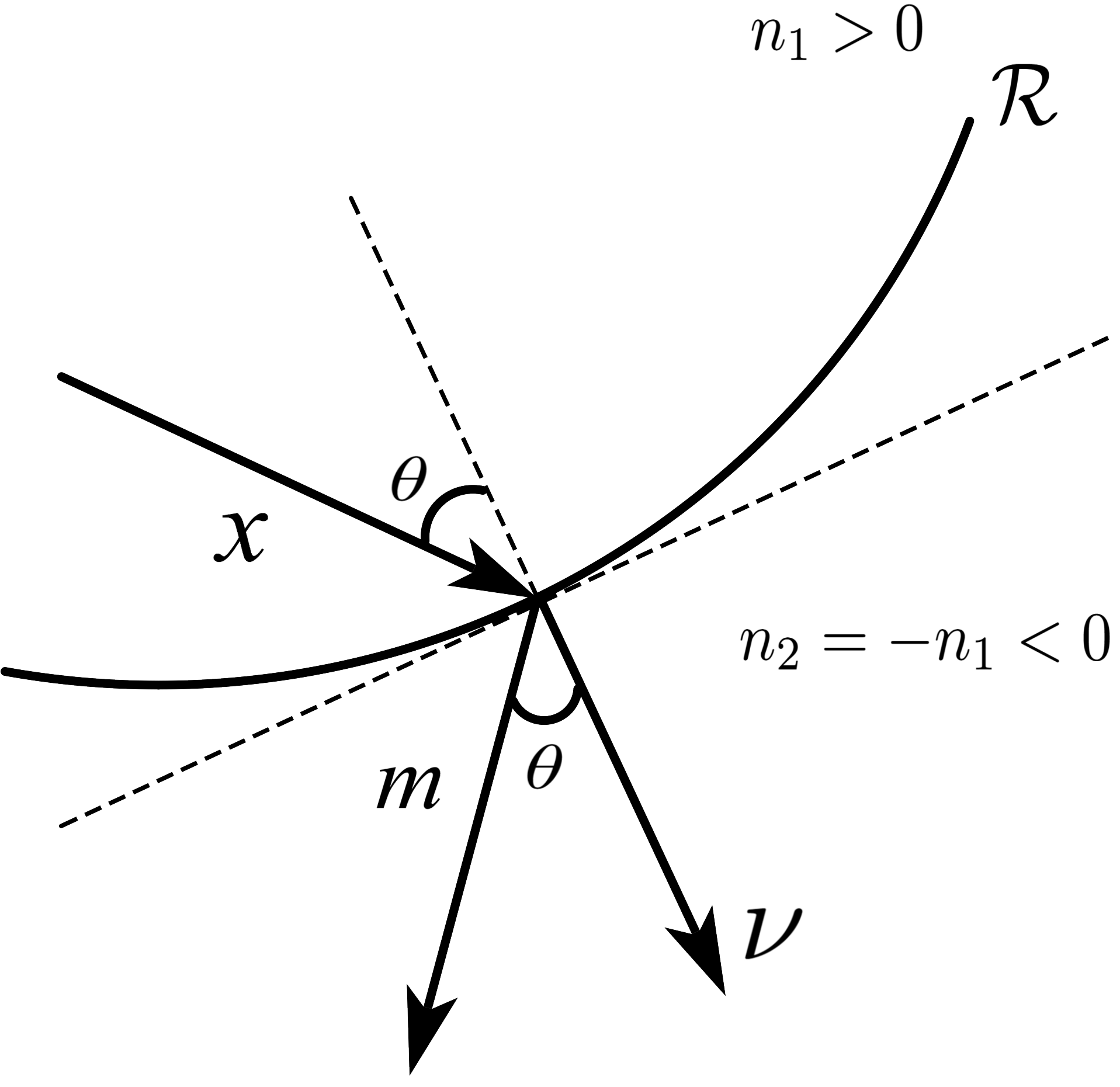}
\caption{Sketch of the refraction with loss of energy when $\kappa = -1$.}\label{fig5}
\end{figure}

From \cite{GS15}, we know that in the case $\kappa = -1$, the refracting surface is a semi-hyperboloid given by
$$\mathcal{E}(P,b) := \{h(x,P,b)x;~x \in S^{n-1},~ x \cdot P > b\},$$
where $h(x,P,b) = \dfrac{b^{2} - \vert P \vert^{2}}{2(b - x \cdot P)}$ with $\vert b \vert \leq \vert P \vert$.

Now we can define the notion of the refractor for the case $\kappa = -1$.

\begin{definition}
  Let $\mathcal{R} = \{\rho(x)x; x \in \bar{\Omega}\} \subset \mathcal{C}_{r_{0}}$ be a surface. Then $\mathcal{R}$ is called a near field refractor for the case $\kappa = -1$ if for any $\rho(x_{0})x_{0} \in \mathcal{R}$, there exist $P \in \bar{D}$ and a number $b$ satisfies $\kappa \vert P \vert < b < \vert P \vert$, such that the refracting oval $\mathcal{E}(P,b)$ supports $\mathcal{R}$ at $\rho(x_{0})x_{0}$. Namely, for any $ x \in \bar{\Omega}$, we have $\rho(x) \leq h(x,P,b)$ and $\rho(x_{0}) = h(x_{0},P,b)$. 
  \label{def6.1}
\end{definition}

The following Figure \ref{fig6} shows the refracting semi-hyperboloid which refracts all ray emitted from the source $O$ to a specific point $P$ for the case $\kappa = -1$.

\begin{figure}[h]
  \centering
  \includegraphics[width=8.3cm]{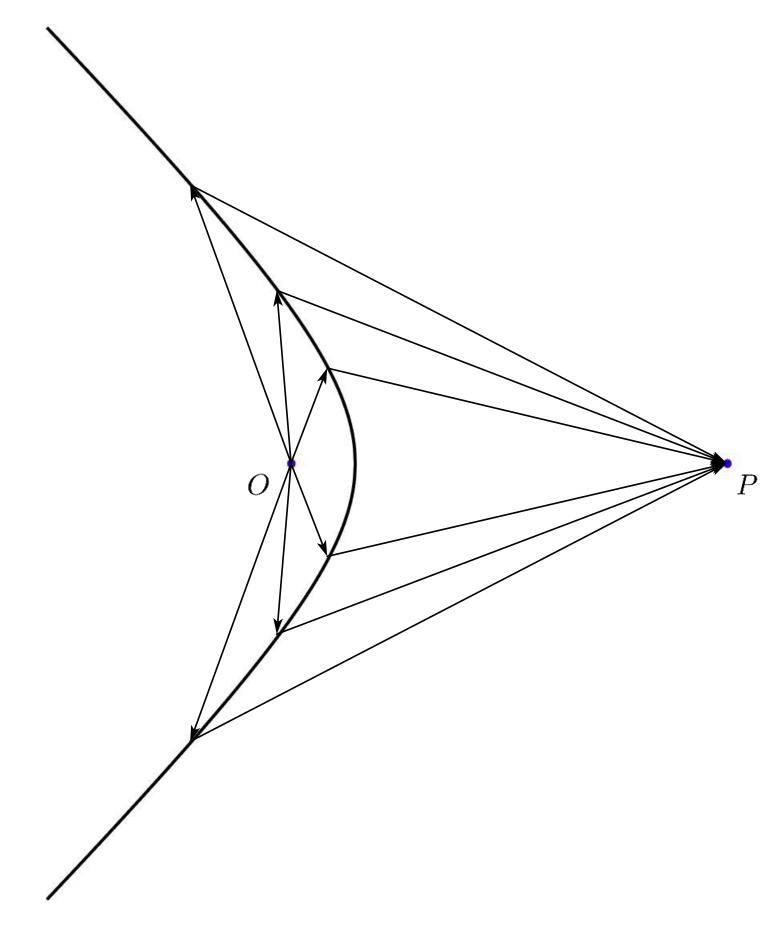}
\caption{Refracting semi-hyperboloid when $\kappa = -1$, where $O$ and $P$ are the focus of hyperboloid.}\label{fig6}
\end{figure}

According to the analysis in Section \ref{Section 3}, we know that the Lipschitz continuity of $\rho$ is crucial for proving the existence of the weak solution. Therefore, we show the Lipschitz continuity of $\rho$ here briefly for the case $\kappa = -1$.

\begin{lemma}
  Suppose that $\mathcal{R} = \{\rho(x)x;~x\in \bar{\Omega}\}$ is a near field refractor for the case $\kappa = -1$, then $\rho$ is Lipschitz continuous with the Lipschitz constant only depending on $b$ and $\vert P \vert$.
  \label{lem6.7}
\end{lemma}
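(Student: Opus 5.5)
We follow the template of Lemmas \ref{lem3.2} and \ref{lem3.4} (Lemmas 3.4 and 2.1/2.3 in \cite{St17}). A near field refractor for $\kappa=-1$ is, by Definition \ref{def6.1}, an envelope of supporting semi-hyperboloids $h(\cdot,P,b)$ lying above $\rho$ and touching it. So it suffices to get a uniform Lipschitz bound for the supporting functions $x\mapsto h(x,P,b)$ on $\bar{\Omega}$. Such a bound transfers to $\rho$ by the standard two-point argument.

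\textbf{Transfer step.} Take $x_{0},x_{1}\in\bar{\Omega}$ and a hyperboloid $\mathcal{E}(P,b)$ supporting $\mathcal{R}$ at $\rho(x_{0})x_{0}$. Then
\[
\rho(x_{1})-\rho(x_{0})\leq h(x_{1},P,b)-h(x_{0},P,b)\leq L\,|x_{1}-x_{0}|.
\]
Exchanging the roles of $x_{0}$ and $x_{1}$, using a hyperboloid supporting at $\rho(x_{1})x_{1}$, gives the reverse inequality, hence $|\rho(x_{1})-\rho(x_{0})|\leq L|x_{1}-x_{0}|$.

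\textbf{Bound on a single hyperboloid.} Write $h(x,P,b)=\dfrac{|P|^{2}-b^{2}}{2(x\cdot P-b)}$. Then
\[
|\nabla_{x}h|\leq \frac{(|P|^{2}-b^{2})\,|P|}{2(x\cdot P-b)^{2}}=\frac{2|P|}{|P|^{2}-b^{2}}\,h(x,P,b)^{2}.
\]
On the arc of $S^{n-1}$ between $x_{0}$ and $x_{1}$ (or along the chord, after homogeneous extension), the mean value theorem then gives
\[
|h(x_{1})-h(x_{0})|\leq \frac{2|P|}{|P|^{2}-b^{2}}\sup_{x}h(x)^{2}\,|x_{1}-x_{0}|.
\]
It remains to control $\sup h$ on $\bar\Omega$, that is, to keep $x\cdot P-b$ bounded away from $0$ on $\bar{\Omega}$.

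\textbf{Main obstacle.} The denominator $x\cdot P-b$ can degenerate as $x\cdot P\to b$. I would handle this using the supporting inequality $\rho\leq h$ of Definition \ref{def6.1} together with $\rho\leq r_{0}$ (as $\mathcal{R}\subset\mathcal{C}_{r_{0}}$), and the analogue of (B0-1) forcing $x\cdot P/|P|\geq\tau+1$ for $x \in \bar\Omega$.

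Concretely, $h(x,P,b)$ is the distance along ray $x$ to the perpendicular bisector-type hyperboloid $|z|-|z-P|=b$. At the touching point, $h(x_{0})=\rho(x_{0})\leq r_{0}$, which gives
\[
x_{0}\cdot P-b\geq \frac{|P|^{2}-b^{2}}{2r_{0}}.
\]
To propagate this to all $x\in\bar\Omega$, I would use that $x\cdot P$ varies by at most $|P|\,\mathrm{diam}$. Alternatively, since $|b|<|P|$ is fixed in the statement, the dependence of the constant on $b$ and $|P|$ is allowed, and one can bound $h\leq (|P|+|b|)/(2\eta)$ with $\eta=\inf_{\bar\Omega}(x\cdot P-b)>0$. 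Positivity of $\eta$ follows from $\bar\Omega\subset\{x\cdot P>b\}$ and compactness.

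The resulting Lipschitz constant is
\[
L=\frac{2|P|}{|P|^{2}-b^{2}}\left(\frac{|P|^{2}-b^{2}}{2\eta}\right)^{2},
\]
which depends only on $b$ and $|P|$ (with the geometric data absorbed as in the statement).
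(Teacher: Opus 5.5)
Your skeleton matches the paper's: a one-sided supporting inequality, followed by an explicit bound on $h(x_1,P,b)-h(x_0,P,b)=\frac{(|P|^2-b^2)(x_0-x_1)\cdot P}{2(x_0\cdot P-b)(x_1\cdot P-b)}$. Your transfer step, with the swap, is the correct form of the paper's first line, which asserts $|\rho(y)-\rho(x)|\le|h(y,P,b)-h(x,P,b)|$ outright. The two routes part ways at the denominator, and there you are right and the paper is not. The paper bounds $(x\cdot P-b)(y\cdot P-b)$ from below by $(|P|-b)^2$, but $x\cdot P\le|P|$ makes that inequality go the other way. That reversed step is the only reason its constant $\frac{(|P|+b)|P|}{2(|P|-b)}$ involves $b$ and $|P|$ alone.

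In fact the statement as written is false. Take $P=e_1\in\bar D$, $b=0$, $\bar\Omega=\{x\in S^{n-1}:\ x\cdot e_1\ge c\}$ for small $c>0$, and $\rho(x)=h(x,e_1,0)=\frac{1}{2\,x\cdot e_1}$. Every point of $\mathcal{R}$ is supported by $\mathcal{E}(e_1,0)$ itself, and $\mathcal{R}\subset\mathcal{C}_{r_0}$ with $r_0=\frac{1}{2c}$, so this is a refractor in the sense of Definition \ref{def6.1}. Yet along a great circle through $e_1$ the derivative of $\rho$ at $x\cdot e_1=c$ is $\frac{\sqrt{1-c^2}}{2c^2}$, which blows up as $c\to 0$ with $b$ and $|P|$ fixed. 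So no argument gives a constant depending on $b$ and $|P|$ only, and ``absorbing the geometric data'' cannot close this; a quantity such as $r_0$ must enter.

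Your own repair also has real gaps:
\begin{enumerate}
\item There is no analogue of (B0-1) at $\kappa=-1$. It would read $x\cdot P\ge(1+\tau)|P|$ with $\tau\in(0,1+\kappa)=\emptyset$.
\item Spreading $x_0\cdot P-b\ge\frac{|P|^2-b^2}{2r_0}$ to all of $\bar\Omega$ via $|P|\,\mathrm{diam}\,\bar\Omega$ fails, because that variation can exceed the lower bound.
\item The $\eta$ fallback presupposes $\bar\Omega\subset\{x\cdot P>b\}$, which is only implicit in Definition \ref{def6.1}, unlike Definition \ref{def3.1}. Its constant depends on $\bar\Omega$ and on the direction of $P$. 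Since the supporting pair changes with $x_0$, the transfer step would also need $\inf\eta>0$ over all supporting hyperboloids, which nothing in the hypotheses provides.
\item A minor slip: the bound is $h\le\frac{|P|^2-b^2}{2\eta}$, not $\frac{|P|+|b|}{2\eta}$.
\end{enumerate}

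The missing idea is to localize at the touching point instead of controlling $h$ on all of $\bar\Omega$. Work with $H(z)=\frac{|P|^2-b^2}{2(z\cdot P-b)}$ on the chord $[x_0,x_1]$. Its denominator is affine there, which also avoids the great-circle arc leaving the cap $\{x\cdot P>b\}$ when $b<0$.
\begin{itemize}
\item If $|x_1-x_0|\le\delta:=\frac{|P|^2-b^2}{4r_0|P|}$, then $z\cdot P-b\ge\frac{|P|^2-b^2}{4r_0}$ on the chord. Hence $\rho(x_1)-\rho(x_0)\le H(x_1)-H(x_0)\le\frac{8r_0^2|P|}{|P|^2-b^2}|x_1-x_0|$.
\item If $|x_1-x_0|>\delta$, then $\rho(x_1)-\rho(x_0)<r_0<\frac{r_0}{\delta}|x_1-x_0|$.
\end{itemize}
Swapping roles gives $L=\frac{8r_0^2|P|}{|P|^2-b^2}$. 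This is the correct form of the lemma, depending on $b$, $|P|$ and $r_0$ as in Lemma \ref{lem3.2}. A single constant for the whole refractor additionally needs $|P|^2-b^2$ bounded below over the supporting pairs.
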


\begin{proof}
  Suppose that $\mathcal{E}(P,b)$ supports $\mathcal{R}$ at $\rho(x)x$, then
  $$\rho(x) = h(x,P,b) \quad \text{and} \quad \rho(y) \leq h(y,P,b)~\text{for all} ~y \in \bar{\Omega}.$$
  So we have
  \begin{equation*}
    \begin{aligned}
    \vert \rho(y) - \rho(x) \vert & \leq \vert h(y,P,b) - h(x,P,b) \vert \leq \frac{1}{2} \left\vert \frac{b^{2} - \vert P \vert^{2}}{b - y \cdot P} - \frac{b^{2} - \vert P \vert^{2}}{b - x \cdot P} \right\vert \\
    & \leq \frac{1}{2} \left\vert \frac{(b + \vert P \vert)(b - \vert P \vert)\vert P \vert}{(b - \vert P \vert)^{2}} \right\vert \Vert x - y \Vert \\
    & \leq \frac{1}{2} \left\vert \frac{(b + \vert P \vert)\vert P \vert}{(b - \vert P \vert)}\right\vert \Vert x - y \Vert.
    \end{aligned}
  \end{equation*}
  Exchanging the roles of $x$ and $y$, then we can get $\vert \rho(x) - \rho(y) \vert \leq L \Vert x - y \Vert$ for $L = \dfrac{1}{2} \left\vert \dfrac{(b + \vert P \vert)\vert P \vert}{(b - \vert P \vert)}\right\vert > 0$. Hence $\rho$ is Lipschitz continuous on $\bar{\Omega}$.
  
\end{proof}

The other properties of the refractor for the case $\kappa = -1$ can be proved by using analogous methods in Section \ref{Section 3.3}. Based on these properties, the existence of the weak solution for the case $\kappa = -1$ can be proved. The proof of existence of the weak solution for this case is much simpler than the cases $\kappa < -1$ and $-1 < \kappa < 0$. Therefore, we shall omit the detailed description.

\section{Conclusion}\label{Section 7}

\sloppy{}

In this paper, we studied the near field refraction problem with loss of energy in negative refractive index material. The Snell law in vector form was first reviewed, and the physical constraints governing refraction were discussed. The refractor for the cases $\kappa < -1$ and $-1 < \kappa < 0$ was then defined, and some important properties were analyzed. As a key tool for describing the loss of energy, Fresnel coefficients and their properties were discussed subsequently. Finally, we defined the weak solution to the near field refraction problem with loss of energy in negative refractive index material and proved the existence of the weak solution for the case $\kappa < -1$ and $-1 < \kappa < 0$. Additionally, the critical case $\kappa = -1$ was briefly discussed at the end. The process of the proof of the existence of the weak solution to the near field refraction with loss of energy is different from the proof of the existence of the weak solution to the near field refraction without loss of energy in \cite{St17}, especially for the case $\kappa < -1$. In \cite{St17}, since the loss of energy is not considered, the energy conservation condition
$$\int_{\Omega} f(x) dx = \mu(\bar{D})$$
is satisfied. So in order to prove the existence of weak solution, it is only necessary to verify the condition of Theorem 2.9 in \cite{GH14}. Because of the loss of energy, we need to use Minkowski method to prove the existence of the weak solution for each cases. Besides, when proving the existence of the weak solution for the case $\kappa < -1$ when $\mu$ equals finite sum of $\delta$-measures, we choose a different $\alpha$ from \cite{St17} to ensure that $\mathcal{R}(\mathbf{b}) = \Gamma(P_{1},b_{1})$. Meanwhile, since we consider the near field refraction problem in negative refractive index material, we use a different method from that in  Chapter 7 of \cite{St16} for the scaling of the inequality when proving the boundedness of some parameters.

This paper used the Minkowski method to solve the near field refraction problem with loss of energy in negative refractive index material, which is a remaining problem in \cite{St16}. Minkowski method is an iterative approach for solving refraction and reflection problems in geometric optics, which is effective in solving refraction problem with loss of energy. In \cite{LW21}, Liu and Wang formulate the near field refraction problem without loss of energy into a nonlinear optimization problem. However, can the near field refraction problem with loss of energy be formulated as a nonlinear optimization problem is still an open problem. Additionally, the existence of the weak solution to the near field refraction problem without loss of energy can be proved by using generated Jacobian equation \cite{Tr14}. Can generated Jacobian equation be used to prove the existence of the weak solution to the near field refraction problem with loss of energy is still an open problem as well.

\vspace{3mm}

\noindent {\bf Conflict of Interest}  { The authors declare no conflict of interest.}

\vspace{3mm}

\noindent {\bf Acknowledgments}  { This work was supported by the National Natural Science Foundation of China (No. 12271093), the Jiangsu Provincial Scientific Research Center of Applied Mathematics (No. BK20233002) and the Start-up Research Fund of Southeast University (No. 4007012503).}

\end{document}